\numberwithin{equation}{section}
\numberwithin{figure}{section}
\newtheorem{theorem}{Theorem}[section]
\newtheorem{lemma}[theorem]{Lemma}
\newtheorem{proposition}[theorem]{Proposition}
\newtheorem{corollary}[theorem]{Corollary}
\newtheorem{remark}[theorem]{Remark}
\newtheorem{example}[theorem]{Example}
\newtheorem{question}[theorem]{Question}
\theoremstyle{definition}
\newtheorem{definition}[theorem]{Definition}
\newcommand{\C}{{\mathbb{C}}}
\newcommand{\Z}{{\mathbb{Z}}}
\newcommand{\Q}{{\mathbb{Q}}}
\newcommand{\into}{\hookrightarrow}
\definecolor{gold}{rgb}{0.85,.66,0}
\definecolor{cherry}{rgb}{0.9,.1,.2}
\definecolor{burgundy}{rgb}{0.8,.2,.2}
\definecolor{orangered}{rgb}{0.85,.3,0}
\definecolor{orange}{rgb}{0.85,.4,0}
\definecolor{olive}{rgb}{.45,.4,0}
\definecolor{lime}{rgb}{.6,.9,0}
\definecolor{green}{rgb}{.2,.7,0}
\definecolor{grey}{RGB}{180, 187, 198}
\definecolor{brown}{rgb}{.4,.3,.1}
\newcommand{\gl}{{\mathfrak gl}}
\renewcommand{\S}{S}
\DeclareMathOperator{\Hess}{Hess}
\newcommand{\Flags}{Flag}
\begin{document}

\title[Filtrations and regular nilpotent Hessenberg varieties]{A filtration on the cohomology rings of regular nilpotent Hessenberg varieties}

\author{Megumi Harada}
\address{Department of Mathematics and
Statistics\\ McMaster University\\ 1280 Main Street West\\ Hamilton, Ontario L8S4K1\\ Canada}
\email{Megumi.Harada@math.mcmaster.ca}
\urladdr{\url{http://www.math.mcmaster.ca/Megumi.Harada/}}

\author [T. Horiguchi]{Tatsuya Horiguchi}
\address{Department of Pure and Applied Mathematics\\
Graduate School of Information Science and Technology\\
Osaka University \\ 
1-5, Yamadaoka, Suita, Osaka, 565-0871 \\ Japan}
\email{tatsuya.horiguchi0103@gmail.com}

\author [S. Murai]{Satoshi Murai}
\address{Department of Mathematics\\
Faculty of Education\\
Waseda University\\
1-6-1 Nishi-Waseda, Shinjuku, Tokyo 169-8050 \\ Japan}
\email{s-murai@waseda.jp}

\author{Martha Precup}
\address{Department of Mathematics and Statistics\\ Washington University in St. Louis \\ One Brookings Drive \\ St. Louis, Missouri  63130 \\ U.S.A. }
\email{martha.precup@wustl.edu}
\urladdr{\url{https://www.math.wustl.edu/~precup/}}

\author [J. Tymoczko]{Julianna Tymoczko}
\address{Department of Mathematics \& Statistics \\ 
Clark Science Center \\ 
Smith College \\ 
Burton Hall 115 \\ 
Northampton, MA 01063 \\ 
U.S.A. }
\email{tymoczko@smith.edu}

\date{\today}


\begin{abstract}

Let $n$ be a positive integer. The main result of this manuscript is a construction of a filtration 
on the cohomology ring of a regular nilpotent Hessenberg variety in $GL(n,\C)/B$ such that its associated graded ring has graded pieces (i.e., homogeneous components) isomorphic to rings which are related to the cohomology rings of Hessenberg varieties in $GL(n-1,\C)/B$, showing the inductive nature of these rings. 
In previous work, the first two authors, together with Abe and Masuda, gave an explicit presentation of these cohomology rings in terms of generators and relations.  We introduce a new set of polynomials which are closely related to the relations in the above presentation and obtain a sequence of equivalence relations they satisfy;  this allows us to derive our filtration.   In addition, we obtain the following three corollaries. First, we give an inductive formula for the Poincar\'e polynomial of these varieties. Second, we give an explicit monomial basis for the cohomology rings of regular nilpotent Hessenberg varieties with respect to the presentation mentioned above. Third, we derive a basis of the set of linear relations satisfied by the images of the Schubert classes in the cohomology rings of regular nilpotent Hessenberg varieties. Finally, our methods and results suggest many directions for future work; in particular, we propose a definition of ``Hessenberg Schubert polynomials'' in the context of regular nilpotent Hessenberg varieties, which generalize the classical Schubert polynomials. We also outline several open questions pertaining to them. 

\end{abstract}

\maketitle

\setcounter{tocdepth}{1}
\tableofcontents

\section{Introduction}
\label{section:Introduction}

Hessenberg varieties\footnote{Hessenberg varieties may be defined in more generality 
in other Lie types. In this manuscript, we focus on the Lie type A case, i.e. $G=GL(n,\C)$ (except in the introduction, where we mention some results for other Lie types).}
 are subvarieties of the full flag
variety $\Flags(\C^n)$ of nested sequences of linear subspaces in
$\C^n$. These varieties lie in a fruitful intersection of algebraic geometry, combinatorics, and representation theory, and they have been studied
extensively since the late 1980s, when they were first introduced by De Mari and Shayman and studied by De Mari, Procesi, and Shayman \cite{DeM, DeMShay, ma-pr-sh}.

We now describe the main result of this paper; the precise statement is given in Theorem~\ref{theorem: main}. 
Let $n$ be a positive integer. A function $h: \{1,2,\ldots,n\} \to \{1,2,\ldots,n\}$ is said to be a \textbf{Hessenberg function} if $h(i) \geq i$ for $1 \leq i \leq n$ and $h(i+1) \geq h(i)$ for $1 \leq i \leq n-1$. For $A$ an $n \times n$ matrix with complex entries and $h$ a Hessenberg function, the \textbf{Hessenberg variety} determined by $A$ and $h$, denoted $\Hess(A,h)$, is a subvariety of the full flag variety $\Flags(\C^n)$. (Details are in Section~\ref{section:Preliminary}.) When the matrix is chosen to be a regular nilpotent matrix $\mathsf{N}$, i.e.  
a matrix whose Jordan form consists of exactly one Jordan block with
corresponding eigenvalue equal to $0$, then the corresponding variety is called a \textbf{regular nilpotent Hessenberg variety}. The cohomology rings
\footnote{In this manuscript, unless stated otherwise, we work with singular cohomology with coefficients in $\Q$.} 
of these varieties are the main objects of study in this manuscript. 
Now let $h: \{1,2,\ldots,n\} \to \{1,2,\ldots,n\}$ be a Hessenberg function and let $\mathsf{N}$ be a regular nilpotent matrix.
Our Theorem~\ref{theorem: main} shows that there exists a positive integer $p \leq n$, a sequence of rings $\mathcal{A}_s^{h}$ for $1 \leq s \leq p$ such that $\mathcal{A}_1^{h} \cong H^*(\Hess(\mathsf{N},h))$, and a filtration of $H^*(\Hess(\mathsf{N},h))$ as a vector space; here, by a filtration we mean that there is a sequence of maps 
\[
0 \to \mathcal{A}_p^{h} \hookrightarrow \mathcal{A}_{p-1}^{h} \hookrightarrow \cdots \hookrightarrow \mathcal{A}_2^{h} \hookrightarrow \mathcal{A}_1^{h} \cong H^*(\Hess(\mathsf{N},h))
\]
where each arrow is an inclusion of vector spaces. Furthermore, for each $s$ with $1\leq s \leq p$, we can define a ``smaller'' Hessenberg function $h^{(s)}: \{1,2,\ldots, n-1\} \to \{1,2,\ldots,n-1\}$ defined on $\{1,2,\ldots,n-1\}$ instead of $\{1,2,\ldots,n\}$ which, intuitively, deletes the $s$-th row and the $s$-th column of the diagram of $h$. (A precise definition is given in~\eqref{eq: def hs}.) In addition to the existence of the filtration above, our main result also shows that the successive quotients $\mathcal{A}_s^{h}/\mathcal{A}_{s+1}^{h}$ are isomorphic to an analogously defined ring associated to the smaller Hessenberg function $h^{(s)}$. More precisely, we prove that 
\[
\mathcal{A}_s^{h}/\mathcal{A}_{s+1}^{h} \cong \mathcal{A}_{r_s}^{h^{(s)}}
\]
for an appropriately chosen integer $r_s$. Our result therefore opens the door for an inductive analysis of these rings, which include the cohomology rings of regular nilpotent Hessenberg varieties.

Some remarks are in order. Building on work of \cite{FHM, HHM}, the first two authors and H.~Abe and Masuda gave a ring presentation of the cohomology rings 
of regular nilpotent Hessenberg varieties in the paper \cite{AHHM}, as a polynomial ring $\Q[x_1,\ldots,x_n]$ modulo an ideal $I_h$, with an explicit list of $n$ generators which we notate as $f_{h(1),1},\ldots,f_{h(n),n}$. This explicit presentation, stated precisely in Theorem~\ref{theorem: cohomology}, is what allows us to make the arguments in the present paper. In another direction, the second and third authors and T.~Abe, Masuda, and Sato showed in \cite{AHMMS} that the cohomology rings of regular nilpotent Hessenberg varieties can be described in the language of hyperplane arrangements. 
From this point of view, the problem of giving an explicit ring presentation of $H^*(\Hess(\mathsf{N},h))$ was recently solved in all Lie types \cite{AHMMS, EHNT19a}.
The main new insight in the current paper, which allows us to make the relevant arguments, is the introduction of a set of polynomials $g_{h(1),1}, g_{h(2),2}, \ldots, g_{h(n),n}$ which are closely related to the above generators $f_{h(i),i}$ of the ideal $I_h$. The key technical points which form the core of our proof are that these new polynomials satisfy the same recursive formulas as the ideal generators (Lemma~\ref{lemma: recursion}), and moreover, that the ideal generators inductively satisfy a very simple algebraic equation involving the $g_{h(j),j}$ (Lemma~\ref{lemma:4-1}). We also heavily use the fact, proven in \cite{AHHM}, that the sequence $\{f_{h(1),1}, f_{h(2),2}, \cdots, f_{h(n),n}\}$ is a regular sequence.

This paper also gives several immediate applications of our main Theorem~\ref{theorem: main}. The first is an inductive formula for the Poincar\'e polynomials of regular nilpotent Hessenberg varieties, stated in Corollary~\ref{corollary: inductive Poincare}. The Betti numbers of these varieties had been computed by previous work of the last two authors \cite{Precup, Tymoczko} and a closed formula for the Poincar\'e polynomial, was given in \cite{SommersTymoczko, AHMMS}. 
Therefore, our formula is certainly not the first formula for the Poincar\'e polynomial of $\Hess(\mathsf{N},h)$, but it reveals new inductive properties of this Poincar\'e polynomial. 
As a second application, we give a set of monomials in the polynomial ring $\Q[x_1,\ldots,x_n]$ such that their images under the quotient map $\Q[x_1,\ldots,x_n] \to \Q[x_1,\ldots,x_n]/I_h \cong H^*(\Hess(\mathsf{N},h))$ form a $\Q$-vector space basis of $H^*(\Hess(\mathsf{N},h))$; the result is stated precisely in Theorem~\ref{theorem: monomial basis}. These monomials are natural from the point of view of Schubert calculus, and in fact were the primary motivation for this manuscript. Note that the result of \cite{EHNT19} gives a basis for the same ring whose elements are monomials in the positive roots, whereas the basis given in Theorem~\ref{theorem: monomial basis} is formed by monomials in the original variables $x_i$. 
Finally, as a third application of our techniques as well as the fact that the cohomology of the full flag variety surjects onto $H^*(\Hess(\mathsf{N},h))$, we give an algorithm for deriving a basis for the set of linear relations satisfied by the images of the Schubert classes in $H^*(\Hess(\mathsf{N},h))$.

Our methods and results suggest many possible avenues of future work. 
Firstly, as mentioned above, our monomial basis has natural interpretations in terms of Schubert calculus, and we use this basis to propose a definition of ``Hessenberg Schubert polynomials'', in the context of regular nilpotent Hessenberg varieties, in Section~\ref{sec: HS polynomials}. 
Secondly, it is known by work of the first two authors and Abe and Masuda that the cohomology ring $H^*(\Hess(\mathsf{N},h))$ is isomorphic to the $\S_n$-invariant subring of the ``dot action'' representation (defined by the fifth author in \cite{Tymoczko-rep}) on the cohomology ring $H^*(\Hess(\mathsf{S},h))^{\S_n}$ of the corresponding regular semisimple  Hessenberg variety (here $\mathsf{S}$ is diagonalizable, with distinct eigenvalues) \cite[Theorem B]{AHHM}. Thus it is natural to ask whether a filtration which is analogous to ours, and which is compatible with the dot action, also exists for $H^*(\Hess(\mathsf{S},h))$. 
Thirdly, on a related note, it would be interesting to see a \emph{geometric} interpretation of our filtration and the isomorphisms of the graded pieces $\mathcal{A}_s^{h}/\mathcal{A}_{s+1}^{h}$ with analogous rings for smaller Hessenberg functions. 
Fourthly, the polynomials $g_{h(j),j}$ which we introduce are defined by using the same recursive formula as the $f_{h(j),j}$, but with a different set of initial values. This suggests that it may be profitable to study the entire family of polynomials defined by these recursive formulas, with differing initial values.

\bigskip
\noindent \textbf{Acknowledgements.}   
We are grateful to the hospitality of the Mathematical Sciences Research Institute in Berkeley, California, and the Osaka City University Advanced Mathematical Institute in Osaka, Japan, where parts of this research was conducted. 
Some of the material contained in this paper are based upon work supported by the National Security Agency
under Grant No. H98230-19-1-0119, The Lyda Hill Foundation, The McGovern
Foundation, and Microsoft Research, while the first, fourth, and fifth authors were in residence at the
Mathematical Sciences Research Institute in Berkeley, California, during the summer of 2019.
We are also grateful for a crucial idea from Claudia Miller and for helpful conversations with Adam Van Tuyl. 
The first author is supported by a Natural Science and Engineering Research Council Discovery Grant and a Canada Research Chair (Tier 2) from the Government of Canada. The second author is supported by
JSPS Grant-in-Aid for JSPS Research Fellow:
17J04330 and by JSPS Grant-in-Aid for Young Scientists:
19K14508. 
The third author is partially supported by Kakenhi 16J04761 and Waseda
University Grant Research Base Creation 2019C-134.
The fifth author is supported in part by NSF DMS-1800773.

\section{Preliminaries}
\label{section:Preliminary}

Let $n$ be a positive integer. Throughout, we will use the notation $[n] := \{1,2,\ldots,n\}$. 

\begin{definition}\label{definition:Hessenberg function} 
A \textbf{Hessenberg function} is a function $h: [n] \to [n]$ satisfying the following two conditions
\begin{enumerate}
\item $h(i) \geq i$ for all $1 \leq i \leq n$, and 
\item $h(i+1) \geq h(i)$ for all $1 \leq i < n$. 
\end{enumerate}
We frequently write a Hessenberg function by listing its values in sequence,
i.e. $h = (h(1), h(2), \ldots, h(n))$.
\end{definition}

We now define the objects which are the geometric motivation for this manuscript. 
Let $h:[n]\to[n]$ be a Hessenberg function and let 
$A$ be an $n\times n$ matrix in $\gl(n,\C)$. Then the
\textbf{Hessenberg variety} $\Hess(A,h)$ associated to $h$ and $A$ is
defined to be  
\begin{align}\label{eq:def-general Hessenberg}
\Hess(A,h) := \{ V_{\bullet}  \in \Flags(\C^n) \;
\vert \;  AV_i \subset 
V_{h(i)} \text{ for all } i=1,\ldots,n\} \subset  \Flags(\C^n).
\end{align}
In particular, by definition $\Hess(A,h)$ is a subvariety of
$\Flags(\C^n)$, and if $h=(n,n,\ldots,n)$, then it is immediate
from~\eqref{eq:def-general Hessenberg}
that $\Hess(A,h)=\Flags(\C^n)$ for any choice of
$A$. Thus the full flag variety $\Flags(\C^n)$ is itself a special
case of a Hessenberg variety.

Let $\mathsf{N}$ denote a regular
nilpotent matrix in $\gl(n,\C)$, i.e., 
a matrix whose Jordan form consists of exactly one Jordan block with
corresponding eigenvalue equal to $0$. 
Then, for any choice of
Hessenberg function $h$, we call $\Hess(\mathsf{N},h)$ the 
\textbf{regular nilpotent Hessenberg variety} (associated to $h$).

We now define the polynomials which are the main focus of this manuscript.  
For $1 \leq j \leq i \leq n$ we define a polynomial 
\begin{equation}\label{eq: def fij} 
f_{i,j}(x_1, \ldots, x_n) :=\sum_{k=1}^j \left( \prod_{\ell=j+1}^i (x_k-x_\ell) \right) x_k.
\end{equation}
Here we take the convention that if $j+1 > i$ (i.e. we are in the situation $i=j$) then the product appearing in the RHS is $1$. Thus for any $i$ with $1 \leq i \leq n$ we have 
\begin{equation}\label{eq: def fii}
f_{i,i}(x_1,\ldots, x_n) := x_1+x_2+\cdots+x_i.
\end{equation} 
We also define the polynomials
\begin{equation}\label{eq: def gij} 
g_{i,j}(x_1, \ldots,x_n) :=\sum_{k=1}^j \left( \prod_{\ell=j+1}^i (x_k-x_\ell) \right)
\end{equation}
for $1 \leq j \leq i \leq n$. In the case when $j+1>i$ (i.e. when $i=j$) we take the same convention as for the $f_{i,j}$ and we see that for any $i$ with $1 \leq i \leq n$ we have 
\begin{equation}\label{eq: def gii}
g_{i,i}(x_1,\ldots, x_n) := \sum_{k=1}^i 1 = i. 
\end{equation}
As can be seen from the definitions, the polynomials $f_{i,j}$ and $g_{i,j}$ are intimately related, and we will use this to our advantage in what follows. 

\begin{remark}\label{remark: f and g homogeneous} 
It is immediate from the definitions that for $1 \leq j \leq i \leq n$, the polynomial $f_{i,j}$ is homogeneous of degree $i-j+1$, and $g_{i,j}$ is homogeneous of degree $i-j$. 
\end{remark} 

It is useful to visualize both $f_{i,j}$ and $g_{i,j}$ as corresponding to the $(i,j)$-th matrix entry in an $n\times n$ matrix as follows: 
\begin{equation}\label{eq:matrix fij}
\begin{pmatrix}
f_{1,1} & 0 & \cdots & \cdots & 0 \\
f_{2,1} & f_{2,2} & 0 & \cdots & 0 \\
f_{3,1} & f_{3,2} & f_{3,3} & \ddots &\vdots \\
\vdots &        &                 &  \ddots &  \\
f_{n,1} & f_{n,2} & \cdots &  & f_{n,n} 
\end{pmatrix}
\end{equation}
and similarly for the $g_{i,j}$. Visualized in this manner, the equations~\eqref{eq: def fii} and~\eqref{eq: def gii} give simple explicit formulas for the entries along the main diagonal, i.e. the $(i,i)$-th entries. 
The polynomials $f_{i,j}$ were originally defined in \cite{AHHM} by the recursive formula \eqref{eq: fij recursion} below. Motivated by this, we have the following lemma.

\begin{lemma}\label{lemma: recursion} 
For $(i,j)$ with $1 \leq j < i \leq n$ we have 
\begin{equation} \label{eq: fij recursion}
f_{i,j}=f_{i-1,j-1}+\big(x_j-x_i\big)f_{i-1,j}
\end{equation}
where we take the convention $f_{i,0}:=0$ for any $i$. 
We also have 
\begin{equation}\label{eq: gij recursion} 
g_{i,j} = g_{i-1,j-1} + (x_j - x_i) g_{i-1,j}
\end{equation}
where again we take the convention that $g_{i,0} = 0$ for any $i$.

\end{lemma}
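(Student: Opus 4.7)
The plan is to prove both recursions by direct computation from the defining formulas \eqref{eq: def fij} and \eqref{eq: def gij}, since the two statements are structurally identical (the $g$-case is obtained from the $f$-case by omitting the trailing factor $x_k$). I would write the argument for $f_{i,j}$ in full and then note that the same manipulation yields the $g_{i,j}$ identity verbatim.

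Fix $(i,j)$ with $1 \leq j < i \leq n$. Starting from the right-hand side, I would expand
\[
f_{i-1,j-1}+(x_j-x_i)f_{i-1,j} = \sum_{k=1}^{j-1}\left(\prod_{\ell=j}^{i-1}(x_k-x_\ell)\right)x_k + (x_j-x_i)\sum_{k=1}^{j}\left(\prod_{\ell=j+1}^{i-1}(x_k-x_\ell)\right)x_k,
\]
then pull out the factor $(x_k-x_j)$ from the first product (so that in each term the remaining product runs from $\ell=j+1$ to $\ell=i-1$), and separate the $k=j$ summand of the second sum. After combining the $k=1,\dots,j-1$ terms of both sums, the bracketed coefficient becomes $(x_k-x_j)+(x_j-x_i)=x_k-x_i$. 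The isolated $k=j$ term supplies exactly the missing summand $(x_j-x_i)\prod_{\ell=j+1}^{i-1}(x_j-x_\ell)\,x_j$ needed to extend the range of summation back to $k=1,\dots,j$. Folding $(x_k-x_i)$ into the product $\prod_{\ell=j+1}^{i-1}(x_k-x_\ell)$ then produces $\prod_{\ell=j+1}^{i}(x_k-x_\ell)$, which by definition is $f_{i,j}$.

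I would also check the boundary case $j=1$: the convention $f_{i,0}=0$ kills the first summand, and the remaining $(x_1-x_i)f_{i-1,1}$ telescopes into $\prod_{\ell=2}^{i}(x_1-x_\ell)\,x_1 = f_{i,1}$, confirming the recursion. The identical calculation with the trailing factor $x_k$ removed proves \eqref{eq: gij recursion}.

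I do not expect any real obstacle — the argument is a single algebraic identity and the only thing to keep track of is the telescoping $(x_k-x_j)+(x_j-x_i)=x_k-x_i$, combined with the bookkeeping between the ranges $k=1,\dots,j-1$ and $k=1,\dots,j$ in the two sums. The only subtle point is the endpoint $k=j$, which is handled by the observation above.
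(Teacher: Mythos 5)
Your proof is correct and uses the same direct algebraic manipulation as the paper: split off the $k=j$ summand, pull out the factor $(x_k-x_j)$, and telescope via $(x_k-x_j)+(x_j-x_i)=x_k-x_i$. The only cosmetic difference is that you carry out the computation for $f_{i,j}$ and then transfer it to $g_{i,j}$, whereas the paper cites \cite[Lemma~6.5]{AHHM} for the $f$-recursion and writes out exactly this computation for $g_{i,j}$.
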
 

\begin{proof} 
For the recursive formula \eqref{eq: fij recursion} for the $f$'s, see \cite[Lemma~6.5]{AHHM}. We show the recursive formula \eqref{eq: gij recursion} for the $g$'s.
First consider the case when $j=1$. Then $g_{i-1,j-1} = g_{i-1, 0} = 0$, so~\eqref{eq: gij recursion} 
becomes 
\[
g_{i,1} = (x_1 - x_i) g_{i-1,1}.
\]
Since $i>j$ by assumption, we know $i\geq 2$. By definition of the $g_{i,j}$ we have 
\[
g_{i,1} = \prod_{\ell=2}^i (x_1 - x_\ell) = (x_1 - x_i) \left( \prod_{\ell=2}^{i-1}(x_1 - x_\ell) \right)  
= (x_1-x_i)  g_{i-1,1} 
\]
as desired, where the product is taken to be equal to $1$ if the top index is less than the bottom index.

Now we consider the case $j \geq 2$. 
We start with the RHS of~\eqref{eq: gij recursion} and compute: 
\begin{equation*}
\begin{split}
& g_{i-1,j-1} + (x_j-x_i) g_{i-1,j} \\
& = \sum_{k=1}^{j-1} \left( \prod_{\ell=j}^{i-1} (x_k-x_\ell) \right) + 
(x_j - x_i) \left( \sum_{k=1}^j \left( \prod_{\ell=j+1}^{i-1} (x_k - x_\ell) \right) \right) \textup{ by definition } \\
 & = \sum_{k=1}^{j-1} \left( \prod_{\ell=j}^{i-1} (x_k-x_\ell) \right) + (x_j - x_i) \left( \sum_{k=1}^{j-1} \left( \prod_{\ell=j+1}^{i-1} (x_k - x_\ell) \right) \right)
 + (x_j - x_i) \left( \prod_{\ell=j+1}^{i-1} (x_j-x_\ell) \right) \\
 & = \sum_{k=1}^{j-1} \left( \prod_{\ell=j}^{i-1} (x_k-x_\ell)\right) + (x_j - x_i) \left( \sum_{k=1}^{j-1} \left( \prod_{\ell=j+1}^{i-1} (x_k - x_\ell) \right) \right)
 +  \left( \prod_{\ell=j+1}^{i} (x_j-x_\ell) \right) \\
 & = \sum_{k=1}^{j-1} \left( \left( \prod_{\ell=j+1}^{i-1} (x_k-x_\ell)\right)  \left( (x_k - x_j) + (x_j - x_i) \right) \right) + 
 \left( \prod_{\ell=j+1}^i (x_j - x_\ell) \right) \\
 & = \sum_{k=1}^{j-1} \left( \prod_{\ell=j+1}^i (x_k-x_\ell) \right)  + \left( \prod_{\ell=j+1}^i (x_j - x_\ell)\right) \\
 & = \sum_{k=1}^j \left( \prod_{\ell=j+1}^i (x_k - x_\ell)\right) \\
 & = g_{i,j}
\end{split}
\end{equation*}
so the RHS equals the LHS and we are done.
\end{proof} 

Let $\mathcal{R} :=\Q[x_1,\ldots,x_n]$ denote the polynomial ring in variables $x_1, \ldots, x_n$, and define the ideal $I_h$ associated with a Hessenberg function $h$ by
\begin{equation}\label{eq: def Ih} 
I_h :=\langle f_{h(1),1}, f_{h(2),2}, \ldots, f_{h(n),n} \rangle.
\end{equation} 

Then the quotient ring $\mathcal{R}/I_h$ has the following geometric meaning as 
shown in \cite{AHHM}. 

\begin{theorem} \cite[Theorem~A]{AHHM} \label{theorem: cohomology} 
  Let $n$ be a positive integer. Let $\mathsf{N}$
  denote a regular nilpotent matrix in $\mathfrak{gl}(n,\C)$,  $h:[n] \to
  [n]$ a Hessenberg function, and let
 $\Hess(\mathsf{N},h) \subset  \Flags(\C^n)$ be
  the associated regular nilpotent Hessenberg variety. Then the
  restriction map
\[
H^*(\Flags(\C^n)) \to H^*(\Hess(\mathsf{N},h))
\]
is surjective, and there is an isomorphism of graded $\Q$-algebras
\begin{equation}\label{eq:intro Theorem A} 
H^*(\Hess(\mathsf{N},h)) \cong \Q[x_1, \ldots, x_n]/I_h = \mathcal{R}/I_h 
\end{equation}
where $I_h$ is the ideal of $\mathcal{R} = \Q[x_1, \ldots, x_n]$ defined in~\eqref{eq: def Ih}.
Here, the isomorphism doubles the grading on the right hand side, namely $\deg(x_i)=2$ for all $1 \leq i \leq n$. 
\end{theorem}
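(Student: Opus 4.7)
The plan is to establish both the surjectivity of the restriction map and the isomorphism statement by combining three ingredients: (i) a paving of $\Hess(\mathsf{N},h)$ by affine cells compatible with the Bruhat decomposition of $\Flags(\C^n)$, (ii) a geometric identification of $x_i$ with a Chern class whose restriction vanishes in the desired sense, and (iii) a Poincaré polynomial comparison that uses the regular sequence property of $\{f_{h(1),1},\ldots,f_{h(n),n}\}$.

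For surjectivity, I would first recall that $\Hess(\mathsf{N},h)$ admits an affine paving whose cells are obtained by intersecting Schubert cells of $\Flags(\C^n)$ with $\Hess(\mathsf{N},h)$ (this paving is due to Tymoczko and Precup, and appears in earlier work cited in the introduction). It follows that $\Hess(\mathsf{N},h)$ has no odd cohomology and that the closures of these cells give classes that are restrictions of Schubert classes. Since Schubert classes generate $H^*(\Flags(\C^n))$ as a ring, and the degrees of these restricted classes span the even cohomology of $\Hess(\mathsf{N},h)$ in each degree, the map $\pi^*\colon H^*(\Flags(\C^n))\to H^*(\Hess(\mathsf{N},h))$ is surjective. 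Combined with Borel's presentation $H^*(\Flags(\C^n))\cong\Q[x_1,\ldots,x_n]/(e_1,\ldots,e_n)$, this yields a graded surjection $\varphi\colon\mathcal{R}/(e_1,\ldots,e_n)\twoheadrightarrow H^*(\Hess(\mathsf{N},h))$, and hence a graded surjection $\tilde\varphi\colon\mathcal{R}\twoheadrightarrow H^*(\Hess(\mathsf{N},h))$.

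Next I would verify that each $f_{h(i),i}$ lies in $\ker\tilde\varphi$. This is the main obstacle and the most interesting step geometrically: the idea is that $f_{i,j}(x_1,\ldots,x_n)$ can be identified, up to appropriate conventions, with the $(i,j)$-entry of the matrix of $\mathsf{N}$ computed with respect to the tautological filtration, viewed as a polynomial in the Chern roots $x_1,\ldots,x_n$ of the tautological line bundles. The Hessenberg condition $\mathsf{N} V_i\subset V_{h(i)}$ forces the $(h(i)+1,i),(h(i)+2,i),\ldots$ entries to vanish pointwise on $\Hess(\mathsf{N},h)$; an inductive/triangular argument using the recursion~\eqref{eq: fij recursion} of Lemma~\ref{lemma: recursion} then translates this into the vanishing of $f_{h(i),i}$ in $H^*(\Hess(\mathsf{N},h))$. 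Concretely, the recursion~\eqref{eq: fij recursion} matches the recursion obtained by expanding the characteristic-polynomial-like expression along the relevant row, so one can check that $\tilde\varphi(f_{h(i),i})=0$ by induction on $i$. Hence $\tilde\varphi$ descends to a surjection $\psi\colon\mathcal{R}/I_h\twoheadrightarrow H^*(\Hess(\mathsf{N},h))$.

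Finally, to upgrade $\psi$ to an isomorphism, I would show that the two graded vector spaces have the same Hilbert series. Since $\deg f_{h(i),i}=h(i)-i+1$ and the sequence $\{f_{h(1),1},\ldots,f_{h(n),n}\}$ is a regular sequence in $\mathcal{R}$ (this is the input from~\cite{AHHM} invoked in the introduction), the complete intersection $\mathcal{R}/I_h$ has Hilbert series
\[
\mathrm{Hilb}(\mathcal{R}/I_h;t)=\prod_{i=1}^{n}\frac{1-t^{h(i)-i+1}}{1-t}.
\]
On the other hand, the affine paving above (together with the known formula for the Poincaré polynomial of $\Hess(\mathsf{N},h)$ from \cite{SommersTymoczko, AHMMS}) shows that the Poincaré polynomial of $\Hess(\mathsf{N},h)$, with grading doubled so that $\deg x_i=2$, matches the same product. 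Since $\psi$ is a surjective graded map between graded $\Q$-vector spaces of equal finite dimension in each degree, it must be an isomorphism, completing the proof.
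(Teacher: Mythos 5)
The paper does not prove this theorem; it is imported from \cite{AHHM} (Theorem~A there), so there is no internal proof to compare against. Evaluating your attempt on its own terms: the overall architecture (surjectivity of restriction, vanishing of the $f_{h(i),i}$, then a Hilbert series count against a complete intersection) is the correct shape, and your final Hilbert series step is fine---the degree of $f_{h(i),i}$ is $h(i)-i+1$, so $\prod_i (1-t^{h(i)-i+1})/(1-t)$ matches the Poincar\'e polynomial $\prod_m(1+t+\cdots+t^{h(m)-m})$ from \cite{SommersTymoczko}. But the first two steps have genuine gaps.

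For surjectivity: the existence of an affine paving of $\Hess(\mathsf{N},h)$ by intersections with Schubert cells (Tymoczko, Precup) gives vanishing of odd cohomology and an additive basis of \emph{homology}, but it does not by itself say that $H^*(\Flags(\C^n))\to H^*(\Hess(\mathsf{N},h))$ is surjective. Your phrase ``the closures of these cells give classes that are restrictions of Schubert classes'' is precisely the point that needs proof and is not automatic: $\Hess(\mathsf{N},h)$ is in general singular, so the cell-closure classes live naturally in homology, and even after pairing there is no a priori reason the dual cohomology classes are restrictions of classes on the ambient flag variety. In \cite{AHHM} surjectivity is established via torus-equivariant cohomology and localization (both spaces are equivariantly formal for an appropriate circle action and one can compare fixed-point data), not by the paving alone.

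For the vanishing of $f_{h(i),i}$: this is the technical heart of the theorem and your sketch does not constitute an argument. The polynomials $f_{i,j}$ are \emph{not} matrix entries of $\mathsf{N}$ in any usable sense---$\mathsf{N}$ is a fixed numerical matrix, whereas the $f_{i,j}$ are degree-$(i-j+1)$ polynomials in the Chern roots---so the appeal to ``the $(i,j)$-entry of the matrix of $\mathsf{N}$ computed with respect to the tautological filtration'' does not translate into a cohomological identity. What actually makes the recursion~\eqref{eq: fij recursion} useful in \cite{AHHM} is that the $f_{h(j),j}$ admit explicit equivariant lifts whose images at each $T$- or $S^1$-fixed point can be computed and shown to vanish via the GKM description of the image of the localization map; the recursion is a bookkeeping device in that computation, not a reflection of matrix arithmetic on $\Hess(\mathsf{N},h)$. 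Without that localization input, your middle step does not close.
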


\begin{remark} \label{remark:CohomologyFlag}
The cohomology ring $H^*(\Flags(\C^n);\Z)$ has the well-known Borel presentation 
\begin{equation}\label{eq: Borel} 
H^*(\Flags(\C^n);\Z) \cong \Z[x_1,\ldots,x_n]/\langle e_1,\ldots, e_n\rangle 
\end{equation}
where $e_i$ denotes the $i$-th elementary symmetric polynomial for each $i$. 
The $x_i$ appearing in \eqref{eq: Borel} represents the negative of the first Chern class of the $i$-th tautological line bundle over $\Flags(\C^n)$. 
The $x_i$ in~\eqref{eq:intro Theorem A} denotes the image of the $x_i$ in~\eqref{eq: Borel} under the restriction map $H^*(\Flags(\C^n)) \to H^*(\Hess(\mathsf{N},h))$.
\end{remark}

Next, we define a \emph{family} of rings which include the cohomology rings appearing in Theorem~\ref{theorem: cohomology} 
as special cases. 
Let $h: [n] \to [n]$ denote a Hessenberg function and let $1 \leq s \leq n+1$. We define 
\begin{equation}\label{eq: def Ajh} 
\mathcal{A}_s^{h} :=\mathcal{R}/\langle g_{h(1),1},\ldots,g_{h(s-1),s-1},f_{h(s),s},\ldots,f_{h(n),n} \rangle.
\end{equation}  
Thus the ring $\mathcal{A}_s^{h}$ is obtained by replacing the first $s-1$ relations $f_{h(1),1}, \ldots, f_{h(s-1),s-1}$ defining the ideal $I_h$ with the polynomials $g_{h(1),1}, \ldots, g_{h(s-1), s-1}$ respectively. 
In particular, it is clear that $\mathcal{A}_1^{h}=\mathcal{R}/I_h \cong H^*(\Hess(\mathsf{N},h))$, so these rings generalize the cohomology rings appearing in the above theorem. 
Note that $\mathcal{A}_{n+1}^{h}$ is the zero ring because $g_{h(n),n}=g_{n,n}=n$ is a constant.

Our arguments will depend heavily on techniques from commutative algebra, stemming from the fact that the sequences appearing in the definition of $\mathcal{A}_s^{h}$ above are regular sequences.  
We first recall the definition of a regular sequence from commutative algebra.

\begin{definition}\label{def of regular seq for Hilb series}
For a ring $S$, a sequence $f_1,\dots,f_r\in S$ is called a
\textbf{regular sequence} if: 
\begin{itemize}
\item[(i)] $f_i$ is non-zero, and not a zero-divisor, in $S/(f_1,\dots,f_{i-1})$ for $i=1,\dots,r$, and 
\item[(ii)] $S/(f_1,\dots,f_r)\neq0$.
\end{itemize}
\end{definition}

If $S$ is a graded $\Q$-algebra and $f_1, \dots, f_r$ are \emph{positive-degree, homogeneous} elements, 
then it is well-known (cf.~for example \cite[Chapter 1, Section 5.6]{stan96}) that $\{f_1, \dots, f_r\}$ is a regular sequence if and only if the set $\{f_1,\dots,f_r\}$ is algebraically
independent over $\Q$ and $S$ is a free
$\Q[f_1,\dots,f_r]$-module. 
The following is then immediate. 

\begin{lemma}\label{lemma: perm} 
Let $\mathcal{R}=\Q[x_1,\ldots,x_n]$ be a polynomial ring and let $g_1,\ldots,g_n \in \mathcal{R}$ be a regular sequence of 
positive-degree homogeneous polynomials in $\mathcal{R}$. If $\{g_1,\ldots,g_n\} \subseteq \mathcal{R}$ is a regular sequence, 
then for any permutation $\sigma \in \S_n$, the reordering $\{g_{\sigma(1)}, \ldots, g_{\sigma(n)}\}$ is also a regular sequence. 
\end{lemma}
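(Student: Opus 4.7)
The plan is to invoke the characterization of regular sequences of positive-degree homogeneous elements stated immediately before the lemma, which asserts that for such elements in a graded $\Q$-algebra $S$, being a regular sequence is equivalent to the two conditions: (a) the set $\{g_1,\ldots,g_n\}$ is algebraically independent over $\Q$, and (b) $S$ is a free module over $\Q[g_1,\ldots,g_n]$. Crucially, both conditions are manifestly symmetric in the $g_i$, whereas the original Definition~\ref{def of regular seq for Hilb series} depends on the order in which the $g_i$ are listed. This is precisely what makes the permutation invariance transparent.

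First I would verify that condition (a) is symmetric: the notion of algebraic independence of a \emph{set} of elements over $\Q$ does not depend on any chosen ordering of the set, so if $\{g_1, \ldots, g_n\}$ is algebraically independent over $\Q$, then so is $\{g_{\sigma(1)}, \ldots, g_{\sigma(n)}\}$. Next I would observe that condition (b) is also symmetric, because as a subring of $\mathcal{R}$ we have the equality
\[
\Q[g_1,\ldots,g_n] = \Q[g_{\sigma(1)}, \ldots, g_{\sigma(n)}],
\]
so freeness of $\mathcal{R}$ over this subring is a property that does not depend on the labeling of a fixed generating set.

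Putting these together: given that $\{g_1, \ldots, g_n\}$ is a regular sequence of positive-degree homogeneous elements, the cited characterization guarantees that (a) and (b) hold. Since both are invariant under permutation of the elements, they also hold for $\{g_{\sigma(1)}, \ldots, g_{\sigma(n)}\}$, and the characterization (applied in the reverse direction, noting that each $g_{\sigma(i)}$ remains a positive-degree homogeneous element) yields that $\{g_{\sigma(1)}, \ldots, g_{\sigma(n)}\}$ is a regular sequence.

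There is essentially no obstacle here: the work was done in establishing the symmetric characterization, and the lemma is a formal consequence, which is why the authors introduce it with "The following is then immediate." If for some reason one wished to avoid invoking the free-module characterization, an alternative approach would be to reduce to the case of adjacent transpositions (which generate $\S_n$) and argue directly that if $g_i, g_{i+1}$ is a regular sequence in $\mathcal{R}/(g_1, \ldots, g_{i-1})$ then so is $g_{i+1}, g_i$; this can be done using the fact that for homogeneous ideals, Hilbert series considerations show that swapping adjacent entries preserves the regular sequence property. However, going through the algebraic-independence/freeness characterization is cleaner and matches the flow of the text.
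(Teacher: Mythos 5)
Your proof is correct and takes essentially the same approach as the paper: both invoke the order-independent characterization of regular sequences (algebraic independence plus freeness of $\mathcal{R}$ as a $\Q[g_1,\ldots,g_n]$-module) stated immediately before the lemma, and observe that both conditions depend only on the underlying set. The alternative sketch via adjacent transpositions and Hilbert series at the end is a reasonable backup but not needed.
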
 

\begin{proof} 
As remarked before the statement of the lemma, it is known that if $g_1, \ldots, g_n$ are positive-degree and homogeneous, then the property of being a regular sequence can be characterized by algebraic independence over $\Q$ and the freeness of $\mathcal{R}$ as a $\Q[g_1,\ldots, g_n]$-module. Both conditions depend only on the set $\{g_1,\ldots,g_n\}$ and are independent of their ordering.  Hence the statement of the lemma follows. 
\end{proof}

The following lemma will be fundamental to our arguments below.

\begin{lemma} \label{lemma:key}
Let $\mathcal{R}=\Q[x_1,\ldots,x_n]$ be a polynomial ring and let $\{g_1,\ldots,g_n\} \subset \mathcal{R}$ be a regular sequence of positive-degree homogeneous polynomials in $\mathcal{R}$.
Assume that $g_n=g'_n \cdot g''_n$ for some positive-degree homogeneous polynomials $g'_n, g''_n \in \mathcal{R}$.
Then, the linear map induced by multiplication by $g''_n$, i.e., the map 
\begin{equation}\label{eq: key injection}
\times g''_n: \mathcal{R}/\langle g_1,\ldots,g_{n-1},g'_n \rangle \to \mathcal{R}/ \langle g_1,\ldots,g_{n-1},g_n\rangle, \quad [f] \mapsto [ f \cdot g''_n] 
\end{equation}
is well-defined and injective. In fact, the above map fits into an exact sequence of $\mathcal{R}$-modules  
\begin{equation}\label{eq: key exact seq}
0 \to \mathcal{R}/\langle g_1,\ldots,g_{n-1},g'_n \rangle \to \mathcal{R}/ \langle g_1,\ldots,g_{n-1},g_n\rangle \to 
\mathcal{R}/ \langle g_1, \ldots, g_{n-1}, g_n, g''_n \rangle \to 0.
\end{equation}
\end{lemma}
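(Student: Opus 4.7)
The plan is to establish well-definedness first, then tackle injectivity (which is the core content), and finally verify exactness at the remaining spot of the sequence.

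For well-definedness of $\times g''_n$, I would simply observe that any element of $\langle g_1,\ldots,g_{n-1},g'_n\rangle$ has the form $\sum_{i=1}^{n-1} a_i g_i + a\, g'_n$, and multiplying by $g''_n$ produces $\sum a_i(g_i g''_n) + a\, g_n$, which lies in $\langle g_1,\ldots,g_{n-1},g_n\rangle$ since $g'_n g''_n = g_n$.

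The main obstacle, and the step where the regular sequence hypothesis is crucial, is injectivity. My plan is to reduce the problem modulo the first $n-1$ elements. Since $\{g_1,\ldots,g_n\}$ is a regular sequence, the element $g_n$ is a non-zerodivisor in the quotient ring $S := \mathcal{R}/\langle g_1,\ldots,g_{n-1}\rangle$. I would then prove the elementary fact that if a product $g_n = g'_n \cdot g''_n$ is a non-zerodivisor in $S$, then each factor is individually a non-zerodivisor: if $g''_n y = 0$ in $S$, then $g_n y = g'_n(g''_n y) = 0$, forcing $y=0$. With this in hand, suppose $f \cdot g''_n \in \langle g_1,\ldots,g_{n-1},g_n\rangle$; then in $S$ we have $\bar f \, \bar g''_n = \bar g'_n \bar g''_n \bar h$ for some $\bar h \in S$, so $\bar g''_n(\bar f - \bar g'_n \bar h) = 0$, and cancelling $\bar g''_n$ gives $\bar f \in \bar g'_n S$, i.e.\ $f \in \langle g_1,\ldots,g_{n-1},g'_n\rangle$, as required.

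For the exact sequence, the rightmost map is the evident surjective quotient $\mathcal{R}/\langle g_1,\ldots,g_{n-1},g_n\rangle \twoheadrightarrow \mathcal{R}/\langle g_1,\ldots,g_{n-1},g_n,g''_n\rangle$, so I only need to identify the kernel with the image of $\times g''_n$. An element $[f]$ lies in the kernel exactly when $f = \sum_{i=1}^{n-1} a_i g_i + b\, g_n + c\, g''_n$, and using $b g_n = b g'_n g''_n$ this rewrites as $f \equiv (b g'_n + c)\, g''_n \pmod{\langle g_1,\ldots,g_{n-1}\rangle}$, exhibiting $[f]$ as the image of $[b g'_n + c]$ under $\times g''_n$. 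Conversely, every element in the image of $\times g''_n$ is visibly killed in the right-hand quotient. Combined with the injectivity established above, this gives the stated short exact sequence.
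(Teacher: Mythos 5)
Your proposal is correct and follows essentially the same approach as the paper: reduce modulo $\langle g_1,\ldots,g_{n-1}\rangle$ to the ring $S$, use regularity to see that $g_n$ (hence also $g''_n$) is a non-zerodivisor in $S$, and then argue injectivity by cancellation, with exactness in the middle by identifying both the image of $\times g''_n$ and the kernel of the projection with the ideal generated by $g''_n$.
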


\begin{proof} 
To check well-definedness, it suffices to check that the generators $\{g_1, g_2,\ldots, g_n'\}$ of the ideal on the LHS of~\eqref{eq: key injection} go to the ideal generated by $\{g_1,g_2,\ldots,g_n\}$, and this is clear. For the injectivity, it is useful to notice that we can describe the map~\eqref{eq: key injection} also as 
\[
\times g''_n: \left(\mathcal{R}/\langle g_1,\ldots, g_{n-1}\rangle \right)/\langle g'_n \rangle \to \left( \mathcal{R}/\langle g_1,\ldots,g_{n-1}\rangle \right)/\langle g_n \rangle,  \quad [f] \mapsto [f \cdot g''_n],
\]
where by slight abuse of notation we also denote by $f$ an element of $S := \mathcal{R}/\langle g_1,\ldots,g_{n-1}\rangle$. By assumption, $\{g_1,\ldots,g_n\}$ is a regular sequence, so the image of $g_n \in \mathcal{R}$ in $S$ is not a zero divisor in $S$. It follows that $g'_n, g''_n$ are also not zero divisors in $S$. Thus to prove the claim it is enough to show the following: for $S$ a commutative ring and $b$ not a zero divisor in $S$, the map 
\[
\times b: S/\langle a \rangle \to S/\langle ab \rangle 
\]
is injective, for any $a \in S$. To see this, suppose $f \in S$ and $fb \in \langle ab\rangle$. We wish to show $f \in \langle a\rangle$. But if $fb \in \langle ab\rangle$ then there exists $g \in S$ with $fb = gab$, so $(f-ga)b=0$ in $S$. Since $b$ is not a zero divisor in $S$, we have $f-ga=0$ in $S$, so $f \in \langle a\rangle$ as desired. 
To see the exactness of the sequence~\eqref{eq: key exact seq}, the only substantive point remaining is exactness in the middle. Note that the image of the map $\times g''_n$ is the (image under the natural quotient map of the) space 
\[
\{ g''_n \cdot f \, \mid \, f \in \mathcal{R}\} 
\]
and the kernel of the projection map $\mathcal{R}/ \langle g_1,\ldots,g_{n-1},g_n\rangle \to 
\mathcal{R}/ \langle g_1, \ldots, g_{n-1}, g_n, g''_n \rangle$ is also the (image under the natural quotient map of the) space
\[
\{g''_n \cdot f \, \mid \, f \in \mathcal{R}\} 
\]
so they are equal.  
\end{proof}

Finally, we will need the following two facts. The first says that if an element in a regular sequence can be factored, then we can ``take off'' one of the factors and still have a regular sequence. This is immediate from the definition of regular sequences so we omit the proof. 

\begin{lemma}\label{lemma: peel} 
Let $\mathcal{R}=\Q[x_1,\ldots,x_n]$ be a polynomial ring and $\{g_1,\ldots,g_n\} \subset \mathcal{R}$ a regular sequence.
Assume that $g_n=g'_n \cdot g''_n$ for some positive-degree homogeneous polynomials $g'_n, g''_n \in \mathcal{R}$.
Then $\{g_1, \ldots, g_{n-1}, g'_n\}$ is also a regular sequence. 
\end{lemma}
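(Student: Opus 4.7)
The plan is to directly verify the two defining conditions of a regular sequence from Definition~\ref{def of regular seq for Hilb series} for the new sequence $\{g_1, \ldots, g_{n-1}, g'_n\}$. The entire argument is an unwinding of definitions, which is presumably why the authors chose to omit it.

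For condition (i) at indices $i = 1, \ldots, n-1$, the requirement that $g_i$ be nonzero and not a zero-divisor in $\mathcal{R}/\langle g_1, \ldots, g_{i-1}\rangle$ is inherited verbatim from the hypothesis that $\{g_1, \ldots, g_n\}$ is a regular sequence, since the first $n-1$ quotient rings are identical for both sequences. There is nothing to do here.

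The main step is condition (i) at $i=n$, namely that $g'_n$ is nonzero and not a zero-divisor in $S := \mathcal{R}/\langle g_1, \ldots, g_{n-1}\rangle$. I would argue as follows: by hypothesis the product $g_n = g'_n \cdot g''_n$ is nonzero in $S$, so neither factor can vanish in $S$; and if $g'_n \cdot \overline{a} = 0$ in $S$ for some $\overline{a} \in S$, then multiplying by $g''_n$ yields $g_n \cdot \overline{a} = 0$ in $S$, which forces $\overline{a} = 0$ because $g_n$ is a non-zero-divisor in $S$. This is precisely the zero-divisor manipulation that already appeared near the start of the proof of Lemma~\ref{lemma:key}.

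Finally, for condition (ii) I must show $\mathcal{R}/\langle g_1, \ldots, g_{n-1}, g'_n\rangle \neq 0$. In the setting of this paper all generators involved are positive-degree homogeneous, so the ideal $\langle g_1, \ldots, g_{n-1}, g'_n\rangle$ lies inside the irrelevant ideal $\langle x_1, \ldots, x_n\rangle$ of $\mathcal{R}$ and is therefore a proper ideal. I do not foresee any genuine obstacle; the only mild subtlety is that, strictly speaking, the homogeneity of the earlier generators $g_1, \ldots, g_{n-1}$ is not spelled out in the hypotheses of the lemma, but this homogeneity is implicit throughout the paper since every regular sequence built from the $f_{h(i),i}$ and $g_{h(i),i}$ is manifestly positive-degree homogeneous.
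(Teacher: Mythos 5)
Your argument is correct, and since the paper declines to write a proof at all (``immediate from the definition''), yours is precisely the routine verification the authors had in mind: conditions (i) at indices $1,\ldots,n-1$ are literally unchanged, and at $i=n$ your zero-divisor manipulation (if $g'_n\bar a = 0$ in $S=\mathcal{R}/\langle g_1,\ldots,g_{n-1}\rangle$, multiply by $g''_n$ and use that $g_n$ is a non-zero-divisor in $S$) is exactly the same trick used inside the proof of Lemma~\ref{lemma:key}.

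Your hesitation over condition (ii) is not pedantry; it identifies a genuine omission in the stated hypotheses. Without also assuming that $g_1,\ldots,g_{n-1}$ are positive-degree homogeneous (so that $\langle g_1,\ldots,g_{n-1},g'_n\rangle$ sits inside the irrelevant ideal), condition (ii) can fail outright. For example, in $\mathcal{R}=\Q[x,y]$ take $g_1 = 1-x$ and $g_2 = xy$. This is a regular sequence, and $g_2 = x\cdot y$ factors into positive-degree homogeneous pieces, yet $\{g_1, g'_2\} = \{1-x,\,x\}$ generates the unit ideal, so $\mathcal{R}/\langle 1-x, x\rangle = 0$ and the conclusion is false. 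In every application in the paper the full sequence consists of positive-degree homogeneous elements (drawn from the $f_{h(i),i}$, $g_{h(i),i}$, and $x_j g_{h(j),j}$ with $j<\mathsf{p}(h)$), so no harm is done; but the hypothesis really should be stated, and your proof of (ii), via the observation that the ideal is contained in $\langle x_1,\ldots,x_n\rangle$, is the correct repair.
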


The second statement is the following well-known characterization of regular sequences phrased in terms of Hilbert series \cite[p.35]{stan96}. For any positively graded algebra $\mathcal{A} = \oplus_{k \geq 0} \mathcal{A}_k$ with finite-dimensional graded components, we let $F(\mathcal{A},t)$ denote its Hilbert series, defined as 
\[
F(\mathcal{A},t) := \sum_{k \geq 0} \dim_\Q(\mathcal{A}_k) t^k.
\]

\begin{proposition}\label{prop: regular seq in terms of Hilbert} 
Let $\mathcal{R} = \Q[x_1,\dots,x_n]$ and let $\{g_1,\dots, g_n\} \in \mathcal{R}$ 
be a set of positive-degree homogeneous polynomials. 
Then $\{g_1,\ldots, g_n\}$ is a regular sequence if and only if 
\begin{align*}
F(\Q[x_1,\dots,x_n]/\langle g_1,\dots,g_n\rangle ,t)
=\prod_{k=1}^n (1+t+t^2 + \cdots + t^{\deg(g_k)-1}) 
\end{align*}
where $\deg(g_k)$ is the degree of $g_k$ in $\Q[x_1,\dots,x_n]$. 
\end{proposition}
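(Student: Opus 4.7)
The plan is to reduce everything to a single local computation: for a graded module $M$ over $\mathcal{R}$ and a positive-degree homogeneous element $g$ of degree $d$, I would track how the Hilbert series changes when passing from $M$ to $M/gM$, and then iterate. The result follows by noting that each step of the iteration introduces equality in a specific Hilbert series comparison if and only if $g$ acts without annihilator on $M$.

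First I would record the basic exact sequence of graded $\mathcal{R}$-modules
\begin{equation*}
0 \longrightarrow \Ann_M(g)(-d) \longrightarrow M(-d) \xrightarrow{\;\times g\;} M \longrightarrow M/gM \longrightarrow 0,
\end{equation*}
where $\Ann_M(g) = \{m \in M \,:\, g\cdot m = 0\}$ and $(-d)$ denotes the degree shift. Taking Hilbert series (which is additive on short exact sequences of finite-dimensional graded components) yields the identity
\begin{equation*}
F(M/gM, t) = (1 - t^d)\, F(M, t) + t^d\, F(\Ann_M(g), t).
\end{equation*}
Since Hilbert series always have non-negative integer coefficients, this gives the coefficient-wise inequality $F(M/gM, t) \geq (1 - t^d) F(M, t)$, with equality if and only if $\Ann_M(g) = 0$, that is, if and only if $g$ is a non-zero-divisor on $M$.

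Next I would iterate this observation. Setting $M_0 = \mathcal{R}$ and $M_i = \mathcal{R}/\langle g_1, \ldots, g_i\rangle$ for $1\leq i \leq n$, applying the inequality to each successive pair $(M_{i-1}, g_i)$ and multiplying gives
\begin{equation*}
F(\mathcal{R}/\langle g_1,\ldots,g_n\rangle, t) \;\geq\; \frac{\prod_{k=1}^n (1 - t^{\deg g_k})}{(1-t)^n} \;=\; \prod_{k=1}^n \bigl(1 + t + \cdots + t^{\deg g_k - 1}\bigr),
\end{equation*}
again coefficient-wise, where I used $F(\mathcal{R}, t) = 1/(1-t)^n$. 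Moreover, equality holds throughout the iteration if and only if each $g_i$ is a non-zero-divisor on $M_{i-1}$, which is precisely condition (i) of Definition~\ref{def of regular seq for Hilb series}. This gives the forward direction directly, and for the converse I would observe that the right-hand product has constant term equal to $1$, so if equality holds the quotient ring is nonzero, automatically giving condition (ii) as well.

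I don't expect any serious obstacle: the argument is entirely formal once the four-term exact sequence is in place. The only mild subtlety is being careful with coefficient-wise inequalities of formal power series (ensuring that at each step of the iteration, multiplying an inequality $A(t) \geq B(t)$ by a polynomial with non-negative coefficients $1 - t^d$ preserves the inequality, and that the $t^d F(\Ann_M(g), t)$ error term at step $i$ is itself non-negative and not cancelled by later steps). This is straightforward but deserves a sentence of justification to make the ``if and only if'' of the final equality condition airtight.
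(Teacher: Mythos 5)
The paper itself does not prove this statement; it cites \cite[p.~35]{stan96}, so there is no in-paper proof to compare against. Your approach --- the four-term exact sequence and iterated Hilbert-series identity --- is a natural and standard route, and the \emph{forward} direction (regular sequence $\Rightarrow$ Hilbert series formula) is completely correct: when each $\Ann_{M_{i-1}}(g_i)=0$ the one-step identity becomes an exact equality, and these multiply out cleanly.

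The \emph{converse}, however, has a genuine gap, and you have actually misdiagnosed the subtlety you flagged. You write that one should verify ``multiplying an inequality $A(t)\geq B(t)$ by a polynomial with non-negative coefficients $1-t^d$ preserves the inequality'' --- but $1-t^d$ does \emph{not} have non-negative coefficients, so this is precisely the step that fails. Unwinding the recursion honestly gives
\begin{equation*}
F(M_n,t)-\prod_{k=1}^n\bigl(1+t+\cdots+t^{\deg g_k-1}\bigr)
=\sum_{i=1}^n\Bigl(\prod_{k=i+1}^n(1-t^{\deg g_k})\Bigr)\,t^{\deg g_i}F\bigl(\Ann_{M_{i-1}}(g_i),t\bigr),
\end{equation*}
and the products $\prod_{k>i}(1-t^{\deg g_k})$ have negative coefficients, so the right-hand side is not a sum of coefficient-wise non-negative terms. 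In particular, the equation $F(M_n,t)=\prod_k(1+t+\cdots+t^{\deg g_k-1})$ does not immediately force each annihilator to vanish, which is exactly what you need for the converse. This is the cancellation concern you raise in your last sentence, but you leave it unresolved.

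The fix within your framework is to divide rather than multiply. Set $G_i(t):=F(M_i,t)/\prod_{k\leq i}(1-t^{\deg g_k})$, so that $G_0=1/(1-t)^n$. The step identity becomes
\begin{equation*}
G_i(t)=G_{i-1}(t)+\frac{t^{\deg g_i}F(\Ann_{M_{i-1}}(g_i),t)}{\prod_{k\leq i}(1-t^{\deg g_k})},
\end{equation*}
and now the correction term \emph{is} a power series with non-negative coefficients, since $1/\prod_{k\leq i}(1-t^{\deg g_k})$ expands with non-negative integer coefficients. Hence $G_n\geq G_0$ coefficient-wise, with equality if and only if every $\Ann_{M_{i-1}}(g_i)=0$ (the domain property of $\Q[[t]]$ lets you cancel the denominator). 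Equality $G_n=G_0$ is equivalent to the asserted Hilbert-series identity because $\prod_k(1-t^{\deg g_k})$ is a unit in $\Q[[t]]$. Condition (ii) then follows from the constant term as you observed. (Alternatively, one can sidestep the iteration entirely in the converse: the formula forces $\dim_\Q\mathcal{R}/\langle g_1,\ldots,g_n\rangle<\infty$, so the $g_i$ are a homogeneous system of parameters, hence a regular sequence by the Cohen--Macaulayness of $\mathcal{R}$.)
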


\section{Statement of the main theorem}
\label{section:Main theorem}

We now give a precise statement of our main theorem. 
To do so, we need a bit more notation. Let $h:[n] \to [n]$ be a Hessenberg function. 
The \textbf{diagram of $h$} is the $n \times n$ array in which a 
``star'' is drawn in the box $(i,j)$ if $i \leq h(j)$. 

\begin{example}\label{example: stars}
Let $h=(2,4,4,5,6,6)$. The diagram of $h$ is given below. 
\[\ytableausetup{centertableaux} 
h:  \; \;   \begin{ytableau}
\star &  \star & \star & \star& \star& \star \\ 
\star & \star &\star & \star & \star & \star  \\
 & \star  & \star & \star& \star& \star \\ 
 & \star & \star & \star & \star & \star \\ 
 & &  & \star& \star& \star \\ 
  & & &  & \star & \star \\ 
\end{ytableau} 
\]
\end{example} 

In addition, for $s$ such that $1 \leq s \leq n$, we let $r_s$ denote the minimal number $m$ such that $h(m) \geq s$. 
Namely, 
\begin{equation}\label{eq: def rs}
r_s := \min \{ m \, \mid \, h(m) \geq s \}.
\end{equation}

\begin{example}
Continuing Example~\ref{example: stars},  
it is not hard to check that for $h=(2,4,4,5,6,6)$, we have $r_1 = 1, r_2=1, r_3=2, r_4 = 2, r_5 = 4$ and $r_6=5$. 
\end{example}

The following is straightforward. 
\begin{lemma}\label{lemma: rs} 
For any Hessenberg function $h: [n] \to [n]$ and any $s$, $1 \leq s \leq n$, we have $r_s \leq s$. 
\end{lemma}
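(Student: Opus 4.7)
The plan is to observe that this is essentially an immediate consequence of the defining condition $h(i) \geq i$ for a Hessenberg function. Specifically, taking $i = s$ gives $h(s) \geq s$, so $s$ itself lies in the set $\{m \mid h(m) \geq s\}$ whose minimum defines $r_s$. Hence $r_s \leq s$.

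The only subtle point, if any, is to confirm that the set $\{m \mid h(m) \geq s\}$ is nonempty (so that the minimum exists and is a well-defined integer). But the argument above already produces an explicit element of this set, namely $m = s$, which is a valid index since $1 \leq s \leq n$. There is no obstacle here; the lemma is purely a bookkeeping statement about the definition of $r_s$ and the monotonicity/positivity axiom of Definition~\ref{definition:Hessenberg function}(1). I would write the proof in one or two lines, simply citing the condition $h(s) \geq s$ from the definition of a Hessenberg function and concluding that $s$ belongs to the set whose minimum is $r_s$.
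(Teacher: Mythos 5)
Your proof is correct and matches the paper's argument exactly: both invoke the axiom $h(s)\geq s$ from Definition~\ref{definition:Hessenberg function} to place $s$ in the set whose minimum defines $r_s$, giving $r_s\leq s$. Your extra remark about nonemptiness is a reasonable (if minor) addition to what the paper records.
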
 

\begin{proof} 
By definition of Hessenberg functions, we must have $h(s) \geq s$, so $s$ is contained in the set appearing in~\eqref{eq: def rs}.
\end{proof} 

Let $h^{(s)}$ be the Hessenberg function obtained from $h$ by removing $s$-th row and $s$-th column from the diagram of $h$ and interpreting the result as the diagram of a Hessenberg function from $[n-1]$ to $[n-1]$. 
We give an example to illustrate.

\begin{example} 
Continuing the above example, let $h=(2,4,4,5,6,6)$ and let $s=2$. If we delete the $2$nd row and $2$nd column from the diagram of $h$, pictorially indicated by the shaded boxes in the figure, then what remains represents the Hessenberg function $h^{(2)} = (1,3,4,5,5)$. 
        \[\ytableausetup{centertableaux} 
        h:  \; \;   \begin{ytableau}
        \star & *(grey) \star & \star & \star& \star& \star \\ 
        *(grey)\star & *(grey)\star &*(grey)\star &*(grey)\star &*(grey)\star &*(grey)\star  \\
         & *(grey)\star  & \star & \star& \star& \star \\ 
         & *(grey)\star & \star & \star & \star & \star \\ 
         & *(grey) &  & \star& \star& \star \\ 
          & *(grey) & &  & \star & \star \\ 
        \end{ytableau}
        \quad \quad \quad 
        h^{(2)}: \; \; 
        \begin{ytableau} 
        \star & \star & \star & \star & \star \\ 
         & \star & \star & \star & \star \\ 
          & \star & \star & \star & \star \\ 
           &  & \star & \star& \star \\ 
            &  & & \star & \star 
        \end{ytableau} 
        \]
\end{example} 

We can also describe $h^{(s)}:[n-1] \to [n-1]$ explicitly as follows:  
\begin{equation}\label{eq: def hs}
h^{(s)}(m)=\begin{cases}h(m) & {\rm if} \ 1 \leq m \leq r_s-1, \\
h(m)-1 & {\rm if} \ r_s \leq m \leq s-1, \\
h(m+1)-1 & {\rm if} \ s \leq m \leq n-1
\end{cases}
\end{equation} 
as the reader may check.

Finally, we define the number $\mathsf{p}(h)$ to be the first position at which the diagram of $h$ hits the main diagonal. More precisely, we define 
\begin{equation}\label{eq: def ph}
\mathsf{p}(h) := \min\{ m \in [n] \, \mid \, h(m)=m \}.
\end{equation} 
Note that any Hessenberg function $h$ satisfies $h(n)=n$, so the set $\{m \in [n]: h(m)=m\}$ is non-empty and the integer $\mathsf{p}(h)$ is well-defined. 
As we already noted in~\eqref{eq: def gii}, the polynomial $g_{m,m}$ is a constant polynomial for any $m \in [n]$. Thus, if $h(m)=m$, then $\mathcal{A}^{h}_{s}$ for any $s>m$ is the zero ring (since the ideal by which we quotient includes $g_{m,m}$, a constant). We need to avoid these degenerate cases in the arguments below. 

\begin{remark} 
We say that a Hessenberg function is \textbf{connected} if $h(i) >i$ for all $1 \leq i < n$. In this case, $\mathsf{p}(h)=n$. The reason for this terminology is that, in this case, the corresponding regular semisimple Hessenberg variety is connected in the classical topological sense. In many situations, it is natural to restrict attention to connected Hessenberg functions. However, we cannot place this assumption on our Hessenberg functions because we also need to deal with disconnected
 (i.e., \ not connected) Hessenberg functions in our inductive argument. 
\end{remark}

We can now state the main theorem.

\begin{theorem} \label{theorem: main} 
Let $n$ be a positive integer and $h: [n] \to [n]$ a Hessenberg function. For each $s$, $1 \leq s \leq \mathsf{p}(h)$, there is a well-defined, injective linear map 
\begin{equation}\label{eq: mult xs}
\xymatrix{
\mathcal{A}^{h}_{s+1} \ar@<-0.3ex>@{^{(}->}[r]^-{\times x_s} & \mathcal{A}_s^{h} 
}
\end{equation}
induced by ``multiplication by $x_s$'', giving rise to a filtration 
\begin{equation}\label{eq: filtration main}
\xymatrix {
0  \ar@<-0.3ex>@{^{(}->}[r] & \mathcal{A}_{\mathsf{p}(h)}^{h}  \ar@<-0.3ex>@{^{(}->}[rr]^-{\times x_{\mathsf{p}(h)-1}} &&
\mathcal{A}_{\mathsf{p}(h)-1}^{h} \ar@<-0.3ex>@{^{(}->}[rr]^-{\times x_{\mathsf{p}(h)-2}} && 
 \cdot \cdot \cdot \ar@<-0.3ex>@{^{(}->}[r]^-{\times x_2} & \mathcal{A}_2^{h} \ar@<-0.3ex>@{^{(}->}[r]^-{\times x_1}  & \mathcal{A}_1^{h} = \mathcal{R}/I_h 
 }
\end{equation}
of the ring $\mathcal{R}/I_h \cong H^*(\Hess(\mathsf{N},h))$. Moreover, for each $s$ with $1 \leq s \leq \mathsf{p}(h)$ we have 
\[
\mathcal{A}^{h}_s / \mathcal{A}^{h}_{s+1} \cong \mathcal{A}_{r_s}^{h^{(s)}}
\]
where by slight abuse of notation we denote by $\mathcal{A}_{s+1}^{h}$ the image of $\mathcal{A}_{s+1}^{h}$ in $\mathcal{A}_s^{h}$ under the map $\times x_s$. 
In particular, if $s=\mathsf{p}(h)$, then we have the isomorphism 
\[
\mathcal{A}^h_{\mathsf{p}(h)} \cong \mathcal{A}_{r_{\mathsf{p}(h)}}^{h^{(\mathsf{p}(h))}}.
\]

\end{theorem}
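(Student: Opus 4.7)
The plan is to induct on $s$ and apply Lemma~\ref{lemma:key} to an appropriately reshaped presentation of the ideal defining $\mathcal{A}_s^h$. The key observation is that Lemma~\ref{lemma:4-1} shows $f_{h(s),s} \equiv x_s \cdot g_{h(s),s}$ modulo $\langle g_{h(1),1},\ldots,g_{h(s-1),s-1}\rangle$, which implies
\[
\mathcal{A}_s^h \cong \mathcal{R}/\langle g_{h(1),1},\ldots,g_{h(s-1),s-1},\, x_s\,g_{h(s),s},\, f_{h(s+1),s+1},\ldots,f_{h(n),n}\rangle.
\]
This puts the ideal in the form required by Lemma~\ref{lemma:key}, with the factorizable element $x_s \cdot g_{h(s),s}$ (placed in the final slot via Lemma~\ref{lemma: perm}) factored as $g'_n = g_{h(s),s}$ and $g''_n = x_s$.

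To apply Lemma~\ref{lemma:key}, I would first establish by induction on $s$ that the defining sequence of $\mathcal{A}_s^h$ is a regular sequence of positive-degree homogeneous polynomials in $\mathcal{R}$ for all $1 \leq s \leq \mathsf{p}(h)$. The base case $s = 1$ is Theorem~\ref{theorem: cohomology} together with the regularity result of \cite{AHHM}. For the inductive step, the rewriting above preserves the ideal (hence the regularity of the sequence), after which Lemma~\ref{lemma: peel} peels off the factor $x_s$ from $x_s g_{h(s),s}$, yielding the defining sequence of $\mathcal{A}^h_{s+1}$. With regularity in hand, Lemma~\ref{lemma:key} produces for each $1 \leq s < \mathsf{p}(h)$ an exact sequence
\[
0 \to \mathcal{A}^h_{s+1} \xrightarrow{\,\times x_s\,} \mathcal{A}^h_s \to \mathcal{R}/\langle g_{h(1),1},\ldots,g_{h(s-1),s-1},\, f_{h(s+1),s+1},\ldots,f_{h(n),n},\, x_s\rangle \to 0,
\]
which simultaneously establishes the well-defined injection $\times x_s \colon \mathcal{A}^h_{s+1} \hookrightarrow \mathcal{A}^h_s$ and identifies $\mathcal{A}^h_s/\mathcal{A}^h_{s+1}$ with the rightmost quotient. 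Iterating over $s$ yields the filtration~\eqref{eq: filtration main}.

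The main technical obstacle is identifying this rightmost quotient with $\mathcal{A}^{h^{(s)}}_{r_s}$. Setting $x_s = 0$ and relabeling via $y_j := x_j$ for $j < s$ and $y_j := x_{j+1}$ for $j \geq s$, I would verify three substitution identities directly from~\eqref{eq: def fij} and~\eqref{eq: def gij}: (i) for $1 \leq m \leq r_s - 1$ we have $h(m) < s$, so $g_{h(m),m}$ does not involve $x_s$ and restricts to $g_{h^{(s)}(m),m}(y_1,\ldots,y_{n-1})$; (ii) for $r_s \leq m \leq s-1$, the substitution $x_s = 0$ replaces each factor $(x_k - x_s)$ in $g_{h(m),m}$ by $x_k$, producing $f_{h(m)-1,m}(y_1,\ldots,y_{n-1}) = f_{h^{(s)}(m),m}(y_1,\ldots,y_{n-1})$; and (iii) for $s+1 \leq m \leq n$, the $k = s$ summand of $f_{h(m),m}$ vanishes and the remaining terms reassemble as $f_{h(m)-1,m-1}(y_1,\ldots,y_{n-1}) = f_{h^{(s)}(m-1),m-1}(y_1,\ldots,y_{n-1})$. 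Combining these with~\eqref{eq: def hs} and~\eqref{eq: def rs} identifies the quotient as the defining presentation of $\mathcal{A}^{h^{(s)}}_{r_s}$. The residual case $s = \mathsf{p}(h)$ is handled by the same substitution argument: here $g_{h(\mathsf{p}(h)),\mathsf{p}(h)} = \mathsf{p}(h)$ is a nonzero constant, so $\mathcal{A}^h_{\mathsf{p}(h)+1} = 0$, and Lemma~\ref{lemma:4-1} gives $f_{h(\mathsf{p}(h)),\mathsf{p}(h)} \equiv \mathsf{p}(h) \cdot x_{\mathsf{p}(h)}$ modulo earlier $g$'s, which forces $x_{\mathsf{p}(h)} \equiv 0$ in $\mathcal{A}^h_{\mathsf{p}(h)}$ and yields the isomorphism $\mathcal{A}^h_{\mathsf{p}(h)} \cong \mathcal{A}^{h^{(\mathsf{p}(h))}}_{r_{\mathsf{p}(h)}}$.
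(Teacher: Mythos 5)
Your approach is correct and matches the paper's proof essentially step by step: rewrite the ideal defining $\mathcal{A}_s^h$ so that the $s$-th generator factors as $x_s \cdot g_{h(s),s}$, establish regularity of the modified sequences by a Hilbert-series/ideal-equality induction combined with Lemma~\ref{lemma: peel}, apply Lemma~\ref{lemma:key} to obtain the injection and the short exact sequence, and then identify the rightmost quotient with $\mathcal{A}_{r_s}^{h^{(s)}}$ by the explicit substitution $x_s \mapsto 0$, $x_m \mapsto y_m$ for $m<s$, $x_m \mapsto y_{m-1}$ for $m>s$. This is precisely Proposition~\ref{prop: first half of main theorem} and Proposition~\ref{proposition:5-1} of the paper.

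One small imprecision: you attribute to Lemma~\ref{lemma:4-1} alone the statement that $f_{h(s),s} \equiv x_s g_{h(s),s}$ modulo $\langle g_{h(1),1},\ldots,g_{h(s-1),s-1}\rangle$. Lemma~\ref{lemma:4-1} only gives the congruence modulo $\langle g_{h(s),s-1}\rangle$; you additionally need Lemma~\ref{lemma:4-2} (with $i = h(s) \geq h(s-1)$) to conclude $g_{h(s),s-1} \in \langle g_{h(1),1},\ldots,g_{h(s-1),s-1}\rangle$ and hence deduce the ideal equality. This is exactly what the paper packages as Lemma~\ref{lemma: equal ideals}. Your formulation of the residual quotient presentation (dropping $f_{h(s),s}$ and replacing it by $x_s$) is a slight streamlining relative to the paper's presentation, which keeps $f_{h(s),s}$ and therefore needs a separate Case (iii) in the proof of Proposition~\ref{proposition:5-1} to dispose of it; your version avoids that case, which is a mild economy, though the underlying justification that the two ideals agree again traces back to Lemmas~\ref{lemma:4-1} and~\ref{lemma:4-2}. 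With that citation tightened, the argument is sound.
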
 

The second statement in the theorem above says that the graded pieces of the associated graded ring corresponding to the filtration~\eqref{eq: filtration main} are isomorphic to analogous rings that are associated to a Hessenberg function for the integer $n-1$. In this sense, the graded pieces correspond to ``smaller'' Hessenberg functions, allowing us to make inductive arguments. This will be important for both the proof of the above theorem as well as for the corollaries thereof.

\section{Proof of main theorem, part 1: a filtration of $R/I_h$}
\label{section:exact sequence}

In this section, we prove the first statement of Theorem~\ref{theorem: main}, namely, that there exists a filtration of the ring $R/I_h$ as in~\eqref{eq: filtration main}. 
We present the argument as a sequence of lemmas, from which the filtration~\eqref{eq: filtration main} follows straightforwardly.

\begin{lemma} \label{lemma:4-1}
Let $1 \leq j \leq i \leq n$. Then 
$$
f_{i,j} \equiv x_j \cdot g_{i,j} \ \ \ {\rm mod} \ g_{i,j-1}
$$
in $\mathcal{R} = \Q[x_1,\ldots,x_n]$. Equivalently, $f_{i,j} - x_j \cdot g_{i,j} \in \langle g_{i,j-1}\rangle$. 
\end{lemma}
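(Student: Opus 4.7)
The plan is to prove the stronger identity
\[
f_{i,j} - x_j \cdot g_{i,j} = g_{i,j-1}
\]
which immediately implies the congruence claimed in the statement. This is a direct algebraic manipulation using the explicit product formulas~\eqref{eq: def fij} and~\eqref{eq: def gij}, so the approach is computational rather than inductive.

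First I would substitute the definitions into the difference $f_{i,j} - x_j \cdot g_{i,j}$ to obtain
\[
f_{i,j} - x_j \cdot g_{i,j} = \sum_{k=1}^{j}\left(\prod_{\ell=j+1}^{i}(x_k - x_\ell)\right)(x_k - x_j).
\]
The key observation is that the $k=j$ summand contains the factor $(x_j - x_j) = 0$ and hence vanishes. This reduces the range of summation to $1 \leq k \leq j-1$. Then I would absorb the extra factor $(x_k - x_j)$ into the product by noting
\[
(x_k - x_j) \cdot \prod_{\ell=j+1}^{i}(x_k - x_\ell) = \prod_{\ell=j}^{i}(x_k - x_\ell),
\]
which after re-indexing the product yields exactly the defining sum of $g_{i,j-1}$.

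Two boundary cases deserve a brief sanity check, but I would not expect either to cause difficulty. When $j=1$, the sum over $1 \leq k \leq j-1$ is empty and the right-hand side is $g_{i,0} = 0$ by the stated convention, matching the computation directly. When $i = j$, the products in $f_{i,j}$ and $g_{i,j}$ are empty, so the identity reduces to the elementary check $(x_1 + \cdots + x_j) - j\,x_j = \sum_{k=1}^{j-1}(x_k - x_j)$, which is $g_{j,j-1}$. Since the identity holds at the level of polynomials (not merely modulo $g_{i,j-1}$), there is no substantive obstacle; the only thing to watch is careful bookkeeping of the product index ranges, including the convention that an empty product equals $1$.
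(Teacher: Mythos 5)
Your computation is correct and follows essentially the same algebraic manipulation as the paper's proof: both split off the $k=j$ term, absorb the factor $(x_k - x_j)$ into the product, and handle $j=1$ separately. The one improvement is that you establish the \emph{exact} identity $f_{i,j} - x_j\,g_{i,j} = g_{i,j-1}$, whereas the paper records only the congruence modulo $g_{i,j-1}$; the stronger form is cleaner and implies the lemma immediately.
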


\begin{proof}
We take cases. First suppose $j=1$. Then by our conventions $g_{i, j-1} = g_{i,1-1} = g_{i,0}=0$. 
Therefore, in this case we must prove that $f_{i,1} = g_{i,1} \cdot x_1$ (i.e. that the LHS and RHS are actually equal, not just equivalent modulo some equivalence relation). 
By definition 
\[
f_{i,1} := \left( \prod_{\ell=2}^i (x_1 - x_\ell) \right) x_1 \quad \textup{ and } \quad 
g_{i,1} := \prod_{\ell=2}^i (x_1 - x_\ell) 
\]
where the products are viewed to be equal to $1$ in the case that $i=1<2$. It follows immediately that for any $1 \leq i \leq n$ we have $f_{i,1} = g_{i,1} \cdot x_1$ as desired. 

Next we consider the case $j \geq 2$. In this case we have by definition 
\begin{equation}\label{eq: gij-1} 
\begin{split}
g_{i,j-1}&=\sum_{k=1}^{j-1}  \prod_{\ell=j}^i (x_k-x_\ell) \\ 
&=\sum_{k=1}^{j-1} \left( \prod_{\ell=j+1}^i (x_k-x_\ell) \right) x_k - \sum_{k=1}^{j-1} \left( \prod_{\ell=j+1}^i (x_k-x_\ell) \right) x_j
\end{split}
\end{equation}
where the second equality is obtained by splitting off the factor $(x_k-x_j)$ in the product formula, and products are interpreted to be equal to $1$ if the top index is smaller than the bottom index. 
We then have 
\begin{align*}
f_{i,j}&=\sum_{k=1}^{j} \left( \prod_{\ell=j+1}^i (x_k-x_\ell) \right)x_k \\ 
&=\sum_{k=1}^{j-1} \left( \prod_{\ell=j+1}^i (x_k-x_\ell) \right)x_k + \left( \prod_{\ell=j+1}^i (x_j-x_\ell) \right)x_j \\ 
&\equiv \sum_{k=1}^{j-1} \left( \prod_{\ell=j+1}^i (x_k-x_\ell) \right) x_j + \left( \prod_{\ell=j+1}^i (x_j-x_\ell) \right)x_j \ \ \ {\rm modulo} \ g_{i,j-1}  \textup{ by~\eqref{eq: gij-1} } \\ 
& = \sum_{k=1}^j \left( \prod_{\ell=j+1}^i (x_k - x_\ell) \right) x_j \\
&=x_j \cdot g_{i,j} \quad \textup{ by definition of $g_{i,j}$ } 
\end{align*}
as desired. 
\end{proof}

The following lemma is based on the proof of \cite[Lemma~4.1]{AHHM}.

\begin{lemma} \label{lemma:4-2}
Let $h: [n] \to [n]$ be a Hessenberg function. 
For $1 < j \leq n$ and $h(j-1) \leq i \leq n$ we have 
\begin{align*}
f_{i,j-1} \in \langle f_{h(1),1},\ldots,f_{h(j-1),j-1} \rangle, \\ 
g_{i,j-1} \in \langle g_{h(1),1},\ldots,g_{h(j-1),j-1} \rangle. 
\end{align*}
\end{lemma}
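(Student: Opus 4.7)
The plan is to prove both statements simultaneously by induction on $j$, exploiting the fact that $f_{i,j}$ and $g_{i,j}$ satisfy the \emph{same} recursive formulas (Lemma~\ref{lemma: recursion}). Consequently a single argument, which I will write for the $f$'s, handles both cases verbatim. After reindexing $j' := j-1$, the goal becomes: for every $1 \le j' \le n-1$ and every $i$ with $h(j') \le i \le n$, the polynomial $f_{i,j'}$ lies in $\langle f_{h(1),1}, \ldots, f_{h(j'),j'}\rangle$. I will prove this by induction on $j'$, with a secondary induction on $i$ inside the inductive step.

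For the base case $j' = 1$, a direct computation suffices: the explicit formula $f_{i,1} = x_1\prod_{\ell=2}^i(x_1-x_\ell)$ factors immediately as
\[
f_{i,1} \;=\; f_{h(1),1}\cdot\prod_{\ell=h(1)+1}^{i}(x_1-x_\ell)
\]
(with the convention that an empty product equals $1$), which lies in $\langle f_{h(1),1}\rangle$. For the inductive step, assume the claim for $j'-1$ and run a secondary induction on $i$, starting from $i = h(j')$. The secondary base case is trivial since $f_{h(j'),j'}$ is itself a generator of the ideal. For the secondary inductive step, Lemma~\ref{lemma: recursion} gives
\[
f_{i+1, j'} \;=\; f_{i, j'-1} + (x_{j'} - x_{i+1})\,f_{i, j'}.
\]
The second summand lies in the ideal by the secondary inductive hypothesis on $i$. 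For the first summand $f_{i,j'-1}$, the crucial observation is that $i \ge h(j') \ge h(j'-1)$ by the monotonicity of $h$, so the outer inductive hypothesis applies and places $f_{i,j'-1}$ in $\langle f_{h(1),1}, \ldots, f_{h(j'-1),j'-1}\rangle$, which is contained in $\langle f_{h(1),1}, \ldots, f_{h(j'),j'}\rangle$. This closes both inductions.

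I do not anticipate a genuine obstacle here: the entire argument is driven by Lemma~\ref{lemma: recursion}, and the Hessenberg monotonicity $h(j') \ge h(j'-1)$ is precisely the hypothesis that makes the two inductions interlock (it is what allows the outer hypothesis on $j'-1$ to be invoked at the value $i$ supplied by the secondary induction). The only mild care needed concerns degenerate cases (for instance, if $h(1) = 1$ then $g_{1,1} = 1$, so $\langle g_{h(1),1}\rangle = \mathcal{R}$ and the $g$-statement becomes vacuous in that situation), but these cause no real trouble.
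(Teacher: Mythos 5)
Your proof is correct and follows essentially the same strategy as the paper's: a double induction (outer on the column index, inner on the row index $i$), driven by the recursion from Lemma~\ref{lemma: recursion} and closed by the Hessenberg monotonicity $h(j') \geq h(j'-1)$. The only differences are cosmetic — you reindex $j' = j-1$, treat the base case via the explicit product formula rather than iterating the recursion, and run the inner induction in the form "prove for $i+1$ from $i$" rather than "prove for $i$ from $i-1$" — none of which changes the substance.
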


\begin{proof}
We induct on $j$ and $i$. Note that $j$ is assumed to satisfy $1 < j$ so the base case is $j=2$. 
In this case we wish to show that 
\[
f_{i,1} \in \langle f_{h(1),1} \rangle
\]
for $h(1) \leq i \leq n$. If $i=h(1)$, then $f_{i,1}=f_{h(1),1}$ so the claim is immediate. Otherwise, the claim follows by a simple induction on $i$ since we know from Lemma~\ref{lemma: recursion} that 
$f_{i,1} = (x_1-x_i) f_{i-1,1}$. Thus $f_{i,1} \in \langle f_{i-1,1} \rangle \subseteq \langle f_{i-2,1} \rangle \subseteq \cdots \subseteq \langle f_{h(1),1} \rangle$ as desired. The argument for $g_{i,1} \in \langle g_{h(1),1}\rangle$ is similar. 

Now suppose that $j \geq 3$ and assume by induction that the claim is true for $j-1$, i.e., for any $i$ with $h(j-2) \leq i \leq n$ we know 
$f_{i,j-2} \in \langle f_{h(1),1}, \ldots, f_{h(j-2), j-2} \rangle$. 
We then wish to show the claim for $j$ and any $i$ with $h(j-1) \leq i \leq n$. We induct on $i$.
The base case is $i=h(j-1)$, which is clear. Now assume $h(j-1)<i \leq n$ and assume
the claim is known for $i-1$. By Lemma~\ref{lemma: recursion} we have 
\begin{equation}\label{eq: ideal recursion} 
f_{i,j-1} = f_{i-1, j-2} + (x_{j-1} - x_i) f_{i-1,j-1}.
\end{equation}
By the inductive hypothesis on the $i$, we know that $f_{i-1,j-1} \in \langle  f_{h(1),1}, \ldots, f_{h(j-1),j-1} \rangle$. 
Next observe that since $i>h(j-1)$ we know $i-1 \geq h(j-1) \geq h(j-2)$. Hence by the inductive hypothesis on $j$ we know that 
\[
f_{i-1, j-2} \in \langle f_{h(1),1}, \ldots, f_{h(j-2), j-2} \rangle \subseteq \langle f_{h(1),1}, \ldots, f_{h(j-1), j-1}\rangle.
\]
Hence we conclude from~\eqref{eq: ideal recursion} that 
\[
f_{i,j-1} \in \langle f_{h(1),1}, \ldots, f_{h(j-1), j-1}\rangle
\]
 as desired. By Lemma~\ref{lemma: recursion} the recursive formula for the $g$'s  are the same as for the $f$'s, so the proof for the $g_{i,j-1}$ is similar. 
 \end{proof}

Using the above two lemmas, we get the following.  

\begin{lemma}\label{lemma: equal ideals} 
Let $1 \leq j \leq n$. The ideal 
\begin{equation*}
\langle g_{h(1),1},\ldots,g_{h(j-1),j-1},f_{h(j),j},f_{h(j+1),j+1},\ldots,f_{h(n),n}\rangle
\end{equation*}
is equal to the ideal 
\begin{equation*}
\langle g_{h(1),1},\ldots,g_{h(j-1),j-1},x_{j} \cdot g_{h(j),j},f_{h(j+1),j+1},\ldots,f_{h(n),n} \rangle.
\end{equation*}
\end{lemma}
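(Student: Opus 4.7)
The plan is to show the two ideals agree by demonstrating that, modulo the common part $\langle g_{h(1),1}, \ldots, g_{h(j-1),j-1} \rangle$, the only ``differing'' generators $f_{h(j),j}$ and $x_j \cdot g_{h(j),j}$ coincide. This reduces the lemma to a direct combination of Lemma~\ref{lemma:4-1} and Lemma~\ref{lemma:4-2}.

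First, handle the degenerate case $j=1$ separately: here the list of $g$'s is empty, and by Lemma~\ref{lemma:4-1} applied with $j=1$ (using the convention $g_{i,0}=0$) we get the \emph{equality} $f_{h(1),1} = x_1 \cdot g_{h(1),1}$ in $\mathcal{R}$, so the two ideals are literally generated by the same elements.

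For $j \geq 2$, I would proceed as follows. By Lemma~\ref{lemma:4-1} applied to the pair $(i,j) = (h(j), j)$, we have
\[
f_{h(j),j} - x_j \cdot g_{h(j),j} \in \langle g_{h(j),j-1} \rangle.
\]
Next, apply Lemma~\ref{lemma:4-2} (the $g$-version) with the same index $j$ and with $i = h(j)$; this is legal because the Hessenberg condition $h(j-1) \leq h(j)$ is exactly the hypothesis $h(j-1) \leq i$ required by Lemma~\ref{lemma:4-2}. This yields
\[
g_{h(j),j-1} \in \langle g_{h(1),1}, \ldots, g_{h(j-1),j-1} \rangle.
\]
Chaining these two containments gives $f_{h(j),j} - x_j \cdot g_{h(j),j} \in \langle g_{h(1),1}, \ldots, g_{h(j-1),j-1} \rangle$, so modulo that ideal the generators $f_{h(j),j}$ and $x_j \cdot g_{h(j),j}$ are equivalent.

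Finally, I would conclude by the standard observation that if two sets of ring elements differ in exactly one position by an element of the ideal generated by the remaining common elements, then the two ideals generated are equal. Explicitly, $f_{h(j),j}$ lies in the second ideal (obtained by adding $x_j g_{h(j),j}$ and the element of $\langle g_{h(1),1},\ldots,g_{h(j-1),j-1}\rangle$ produced above), and symmetrically $x_j \cdot g_{h(j),j}$ lies in the first ideal. Since the remaining generators $f_{h(j+1),j+1}, \ldots, f_{h(n),n}$ are shared, the two ideals coincide. There is no substantive obstacle here; the only thing to be careful about is correctly invoking Lemma~\ref{lemma:4-2} with the index shift (its ``$j$'' matches ours, its ``$i$'' becomes $h(j)$) and verifying the Hessenberg condition $h(j-1) \le h(j)$ makes this legal.
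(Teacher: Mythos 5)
Your proof is correct and takes essentially the same route as the paper: both arguments reduce the claim to the observation that $f_{h(j),j}-x_j g_{h(j),j}\in\langle g_{h(j),j-1}\rangle$ (Lemma~\ref{lemma:4-1}) and that $g_{h(j),j-1}\in\langle g_{h(1),1},\ldots,g_{h(j-1),j-1}\rangle$ (Lemma~\ref{lemma:4-2} with $i=h(j)$, using $h(j-1)\le h(j)$), after handling $j=1$ by the identity $f_{h(1),1}=x_1 g_{h(1),1}$. The only stylistic difference is that the paper writes this as a chain of ideal equalities in which $g_{h(j),j-1}$ is temporarily added as an explicit generator and then removed, whereas you keep it implicit and note the difference of the two swapped generators lies in the common part; these are the same argument.
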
 

\begin{proof} 
In the case $j=1$ it suffices to observe that the definitions of $f_{i,j}$ and $g_{i,j}$ immediately imply that $f_{h(1),1} = x_1 \cdot g_{h(1),1}$. This in turn implies that the two ideals are equal. 
For $j>1$ we have  
\begin{equation*}
\begin{split} 
&\langle g_{h(1),1},\ldots,g_{h(j-1),j-1},f_{h(j),j},f_{h(j+1),j+1},\ldots,f_{h(n),n}\rangle \\
=&\langle g_{h(1),1},\ldots,g_{h(j-1),j-1},g_{h(j),j-1},f_{h(j),j},f_{h(j+1),j+1},\ldots,f_{h(n),n} \rangle  \quad 
\textup{ by Lemma~\ref{lemma:4-2}} \\
 & \phantom{move over move over } \textup{ since $h(j) \geq h(j-1)$} \\
=&\langle g_{h(1),1},\ldots,g_{h(j-1),j-1},g_{h(j),j-1},x_{j} \cdot g_{h(j),j},f_{h(j+1),j+1},\ldots,f_{h(n),n} \rangle
\quad \textup{ by Lemma~\ref{lemma:4-1} }  \\
=&\langle g_{h(1),1},\ldots,g_{h(j-1),j-1},x_{j} \cdot g_{h(j),j},f_{h(j+1),j+1},\ldots,f_{h(n),n} \rangle \quad 
\textup{ by Lemma~\ref{lemma:4-2}} 
\end{split}
\end{equation*}
as desired. 
\end{proof}

We now recall the following fact. 
\begin{lemma}\label{lemma: f regular}(\cite[Lemma 6.8]{AHHM}) 
The polynomials $\{f_{h(1),1}, f_{h(2),2}, \ldots, f_{h(n),n}\}$ form a regular sequence in $\Q[x_1,\ldots,x_n]$. 
\end{lemma}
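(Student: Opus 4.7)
The statement is presented as a citation of \cite[Lemma 6.8]{AHHM}, so the most direct course is simply to quote that reference; the lemma is included here in order to be invoked as input to the later arguments. Nevertheless, a plan for an independent proof would run as follows. The natural tool is the Hilbert series criterion in Proposition~\ref{prop: regular seq in terms of Hilbert}: combined with Remark~\ref{remark: f and g homogeneous}, which gives $\deg f_{h(k),k}=h(k)-k+1$, regularity of $\{f_{h(1),1},\ldots,f_{h(n),n}\}$ is equivalent to
\[
F\bigl(\mathcal{R}/\langle f_{h(1),1},\ldots,f_{h(n),n}\rangle,t\bigr)=\prod_{k=1}^{n}\bigl(1+t+\cdots+t^{h(k)-k}\bigr).
\]
So the plan is to verify this Hilbert series identity by purely algebraic means (without circularly invoking Theorem~\ref{theorem: cohomology}).

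First I would set up an induction on $n$. The base case $n=1$ forces $h(1)=1$ and $f_{1,1}=x_1$, and the Hilbert series of $\Q[x_1]/(x_1)$ is $1$, matching the empty product. For the inductive step, I would use the recursion $f_{i,j}=f_{i-1,j-1}+(x_j-x_i)f_{i-1,j}$ from Lemma~\ref{lemma: recursion} to relate the sequence $\{f_{h(k),k}\}$ for $h$ on $[n]$ to a corresponding sequence for a Hessenberg function on $[n-1]$. A convenient route is to peel off the last generator $f_{h(n),n}$, which has degree $h(n)-n+1$, and show by a Koszul-type short exact sequence (in the spirit of Lemma~\ref{lemma:key}) that multiplication by $f_{h(n),n}$ is injective on $\mathcal{R}/\langle f_{h(1),1},\ldots,f_{h(n-1),n-1}\rangle$; one then computes the Hilbert series of the quotient by the degree shift $(1-t^{h(n)-n+1})$ and factorizes it against $(1-t^{h(n)-n+1})/(1-t)$ to expose the claimed product.

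An alternative strategy, and arguably a cleaner one, is geometric: since $\mathcal{R}=\Q[x_1,\ldots,x_n]$ is Cohen--Macaulay of Krull dimension $n$, any system of $n$ homogeneous elements is a regular sequence iff their common vanishing locus in $\mathbb{A}^n$ is precisely $\{0\}$. One would then verify $V(f_{h(1),1},\ldots,f_{h(n),n})=\{0\}$ directly from the factored form
\[
f_{i,j}=\sum_{k=1}^{j}\left(\prod_{\ell=j+1}^{i}(x_k-x_\ell)\right)x_k,
\]
for example by working down from $f_{h(n),n}=x_1+\cdots+x_n=0$ and iteratively using the previous equations to force each $x_k=0$.

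The main obstacle is that the polynomials $f_{h(k),k}$ are genuinely intricate, with no obvious monomial order making their leading terms independent; thus neither the Hilbert-series recursion nor the zero-locus analysis is automatic. Handling these details carefully is precisely the work carried out in \cite[Section~6]{AHHM}, where the authors also make essential use of the recursive structure and a regular-sequence argument analogous to the Hilbert-series induction sketched above.
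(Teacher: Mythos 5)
The paper does not prove this lemma; it simply cites \cite[Lemma 6.8]{AHHM}, and your primary move of quoting that reference is exactly what the paper does, so on that score you match.

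Of the two alternative sketches you offer, the second (Cohen--Macaulay plus zero-locus) is the one that can actually be carried out, and it is essentially the route followed in \cite{AHHM}. However, your description of how to verify $V(f_{h(1),1},\ldots,f_{h(n),n})=\{0\}$ (``work down from $f_{n,n}=0$ and iteratively force each $x_k=0$'') underestimates the difficulty: the individual equations $f_{h(j),j}=0$ do not directly isolate coordinates, and the argument does not fall out one variable at a time. The efficient way to make this rigorous is to first invoke Lemma~\ref{lemma:4-2} (an algebraic consequence of the recursion in Lemma~\ref{lemma: recursion}, with no circularity) to conclude $f_{n,j}\in I_h$ for all $j$, hence $V(I_h)\subseteq V(f_{n,1},\ldots,f_{n,n})$, thereby reducing to the single case $h=(n,\ldots,n)$; one then checks that the latter locus is $\{0\}$. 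Your sketch omits this reduction, which is what makes the zero-locus analysis tractable.

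The first sketch (Hilbert series with an induction on $n$) has two real gaps. First, the inductive step you describe — ``show by a Koszul-type short exact sequence (in the spirit of Lemma~\ref{lemma:key}) that multiplication by $f_{h(n),n}$ is injective on $\mathcal{R}/\langle f_{h(1),1},\ldots,f_{h(n-1),n-1}\rangle$'' — is not a tool you can apply here: proving that injectivity \emph{is} the non-zero-divisor condition that constitutes the regular-sequence claim at that step, and Lemma~\ref{lemma:key} assumes regularity as a hypothesis rather than delivering it. Second, the reduction from $n$ to $n-1$ is not structurally clean: after quotienting by the linear form $f_{h(n),n}=f_{n,n}=x_1+\cdots+x_n$ and eliminating $x_n$, the remaining generators $f_{h(k),k}$ (which may involve $x_n$ whenever $h(k)=n$) do not become polynomials of the form $f_{h'(k),k}$ for a Hessenberg function $h'$ on $[n-1]$, so the inductive hypothesis is not in the right shape to be invoked. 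You are right that the details ``are precisely the work carried out in \cite[Section~6]{AHHM},'' but the particular induction you sketch would not close.
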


In our arguments below, we consider ideals generated by sequences of polynomials which replace some of the $f_{h(j),j}$ appearing in the sequence above with $g_{h(j),j}$'s, or $x_j \cdot g_{h(j),j}$. The following lemma states that these still form regular sequences. 

\begin{lemma}\label{lemma:4-4}
Let $h: [n] \to [n]$ be a Hessenberg function. 
Then
\begin{enumerate} 
\item[(i)] The set $\{g_{h(1),1},g_{h(2),2},\dots,g_{h(j-1),j-1},x_j \cdot
g_{h(j),j},f_{h(j+1),j+1},\dots,f_{h(n),n}\}$ is a regular sequence
 for any integer $1 \leq j \leq p(h)$.
\item[(ii)] The set
$\{g_{h(1),1},g_{h(2),2},\dots,g_{h(j-1),j-1},g_{h(j),j},f_{h(j+1),j+1},\dots,f_{h(n),n}\}$
is a regular sequence.
for any integer $0
\leq j <p(h)$. (When $j=0$ we interpret the sequence to be $\{f_{h(1),1},\ldots, f_{h(n),n}\}$.)

\end{enumerate} 
\end{lemma}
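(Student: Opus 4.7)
The plan is to establish (i) and (ii) by an interleaved induction on $j$: (ii) at $j=0$ serves as the base case; (ii) at $j-1$ will be shown to imply (i) at $j$ for $1 \leq j \leq \mathsf{p}(h)$; and separately (i) at $j$ will be shown to imply (ii) at $j$ for $1 \leq j < \mathsf{p}(h)$. The base case is exactly Lemma~\ref{lemma: f regular}, which asserts regularity of $\{f_{h(1),1}, \ldots, f_{h(n),n}\}$.

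For the first implication, suppose (ii) holds at $j-1$, so that
\[
S := \{g_{h(1),1}, \ldots, g_{h(j-1),j-1},\, f_{h(j),j},\, f_{h(j+1),j+1}, \ldots, f_{h(n),n}\}
\]
is a regular sequence, and compare it with the sequence
\[
S' := \{g_{h(1),1}, \ldots, g_{h(j-1),j-1},\, x_j \cdot g_{h(j),j},\, f_{h(j+1),j+1}, \ldots, f_{h(n),n}\}
\]
from (i). By Lemma~\ref{lemma: equal ideals} the two sequences generate the same ideal of $\mathcal{R}$, and by Remark~\ref{remark: f and g homogeneous} their corresponding entries are homogeneous of matching positive degrees (in particular $\deg(f_{h(j),j}) = h(j)-j+1 = \deg(x_j \cdot g_{h(j),j})$). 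Hence $\mathcal{R}/\langle S\rangle$ and $\mathcal{R}/\langle S'\rangle$ have identical Hilbert series, and applying the ``if'' direction of Proposition~\ref{prop: regular seq in terms of Hilbert} to $S'$ transfers regularity from $S$ to $S'$, proving (i) at $j$.

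For the second implication, assume (i) holds at $j$ with $j < \mathsf{p}(h)$. The plan is to use Lemma~\ref{lemma: perm} to reorder $S'$ so that $x_j \cdot g_{h(j),j}$ appears last, then invoke Lemma~\ref{lemma: peel} to peel off the factor $x_j$ and conclude that replacing $x_j \cdot g_{h(j),j}$ by $g_{h(j),j}$ still produces a regular sequence, and finally apply Lemma~\ref{lemma: perm} once more to restore the original ordering. The hard part — and the reason (ii) is restricted to $j < \mathsf{p}(h)$ — will be in verifying the hypotheses of Lemma~\ref{lemma: peel}: both $x_j$ and $g_{h(j),j}$ must be positive-degree homogeneous polynomials. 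The first is obvious, and the strict inequality $h(j) > j$ coming from the minimality in the definition of $\mathsf{p}(h)$ guarantees $\deg(g_{h(j),j}) = h(j)-j > 0$. At $j = \mathsf{p}(h)$ this degeneracy becomes real, since by~\eqref{eq: def gii} the polynomial $g_{h(j),j} = g_{j,j} = j$ is a nonzero constant, so Lemma~\ref{lemma: peel} breaks down — and indeed (ii) at $j = \mathsf{p}(h)$ would give a quotient ring that contains a unit in the defining ideal, violating Definition~\ref{def of regular seq for Hilb series}(ii). This is precisely why the induction on (ii) must halt at $j = \mathsf{p}(h)-1$ while (i) extends to $j = \mathsf{p}(h)$.
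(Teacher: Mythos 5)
Your proposal is correct and follows essentially the same interleaved-induction scheme as the paper's proof: base case (ii) at $j=0$ is Lemma~\ref{lemma: f regular}; (ii) at $j-1$ gives (i) at $j$ by combining Lemma~\ref{lemma: equal ideals} with the Hilbert-series criterion of Proposition~\ref{prop: regular seq in terms of Hilbert}; and (i) at $j$ gives (ii) at $j$ via Lemma~\ref{lemma: peel}. You supply a bit more detail than the paper does, notably the use of Lemma~\ref{lemma: perm} to move $x_j \cdot g_{h(j),j}$ to the end before peeling and the explicit check that $g_{h(j),j}$ has positive degree exactly when $j < \mathsf{p}(h)$, which correctly explains the index ranges.
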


\begin{proof} 
By Proposition~\ref{prop: regular seq in terms of Hilbert}, if $\{F_1,\dots,F_n\}$ and 
$\{G_1,\ldots, G_n\}$ are two sequences of polynomials with $\deg(F_i)=\deg(G_i)$ for all $i$ 
and 
\[
\langle F_1,\dots,F_n \rangle =\langle
G_1,\dots,G_n \rangle
\]
and if in addition $\{F_1,\ldots,F_n\}$ is a regular sequence, then 
$G_1,\dots,G_n$ is also a regular sequence.
Thus by Lemma~\ref{lemma: equal ideals} if (ii) holds for $j=k$ then (i) holds for $j=k+1$.
Also, from Lemma~\ref{lemma: peel} we know that if (i) holds for $j=k$ then (ii) also holds
for $j=k$. Since the statement (ii) for $j=0$ is Lemma~\ref{lemma: f regular}, 
an induction argument starting at $j=0$ for statement (ii) implies the claims. 
\end{proof}

Using the above lemmas, we can now 
prove the first part of Theorem~\ref{theorem: main}, namely, that there is an exact sequence connecting the rings $\mathcal{A}_s^{h}$ for differing values of $s$. We state this formally as a Proposition below.

\begin{proposition}\label{prop: first half of main theorem}
Let $s$ be an integer, $1 \leq s \leq \mathsf{p}(h)$. 
Following the notation established above, there is a well-defined injective linear map
\[
\xymatrix{
\mathcal{A}^{h}_{s+1} \ar@<-0.3ex>@{^{(}->}[r]^-{\times x_s} & \mathcal{A}_s^{h} 
}\]
which fits into an exact sequence 
\begin{equation} \label{equation:exact}
0 \to \mathcal{A}_{s+1}^{h} \xrightarrow{\times x_{s}} \mathcal{A}_s^{h} \to \mathcal{A}_s^{h}/\langle x_{s} \rangle \to 0.
\end{equation}
Thus there exists a filtration 
\begin{equation}\label{eq: filtration prop}
\xymatrix {
0  \ar@<-0.3ex>@{^{(}->}[r] & \mathcal{A}_{\mathsf{p}(h)}^{h}  \ar@<-0.3ex>@{^{(}->}[rr]^-{\times x_{\mathsf{p}(h)-1}} &&
\mathcal{A}_{\mathsf{p}(h)-1}^{h} \ar@<-0.3ex>@{^{(}->}[rr]^-{\times x_{\mathsf{p}(h)-2}} && 
 \cdot \cdot \cdot \ar@<-0.3ex>@{^{(}->}[r]^-{\times x_2} & \mathcal{A}_2^{h} \ar@<-0.3ex>@{^{(}->}[r]^-{\times x_1}  & \mathcal{A}_1^{h} = \mathcal{R}/I_h 
 }
\end{equation}
of the ring $\mathcal{R}/I_h \cong H^*(\Hess(\mathsf{N},h))$. 
\end{proposition}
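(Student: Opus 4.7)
The plan is to combine the equal-ideals identity of Lemma~\ref{lemma: equal ideals} with Lemma~\ref{lemma:key}, after some relabeling. The key observation is that by Lemma~\ref{lemma: equal ideals}, the ideal defining $\mathcal{A}_s^h$ can be rewritten so that $f_{h(s),s}$ is replaced by the factored polynomial $x_s \cdot g_{h(s),s}$, and the two factors of this product are precisely $x_s$ (whose multiplication map we want to analyze) and $g_{h(s),s}$ (which is the additional generator defining $\mathcal{A}_{s+1}^h$).

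For $1 \leq s < \mathsf{p}(h)$, I would first apply Lemma~\ref{lemma: equal ideals} at index $j=s$ to write
\[
\mathcal{A}_s^{h} = \mathcal{R}/\langle g_{h(1),1},\ldots,g_{h(s-1),s-1},\, x_{s}\cdot g_{h(s),s},\, f_{h(s+1),s+1},\ldots,f_{h(n),n} \rangle.
\]
By Lemma~\ref{lemma:4-4}(i), the sequence appearing in this presentation is a regular sequence of positive-degree homogeneous polynomials (the positivity of $\deg g_{h(s),s} = h(s)-s$ uses $s < \mathsf{p}(h)$). Next I would invoke Lemma~\ref{lemma: perm} to reorder the sequence so that $x_s \cdot g_{h(s),s}$ appears last, and then apply Lemma~\ref{lemma:key} with $g_n' = g_{h(s),s}$ and $g_n'' = x_s$. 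This produces the exact sequence
\[
0 \to \mathcal{R}/\langle \ldots, g_{h(s),s}\rangle \xrightarrow{\times x_s} \mathcal{R}/\langle \ldots, x_s\cdot g_{h(s),s}\rangle \to \mathcal{R}/\langle \ldots, x_s \cdot g_{h(s),s},\, x_s\rangle \to 0.
\]
I would then identify the three terms with, respectively: $\mathcal{A}_{s+1}^{h}$ (the added generator $g_{h(s),s}$ together with the remaining $g$'s and $f$'s matches the definition); $\mathcal{A}_s^{h}$ (by the rewriting above); and $\mathcal{A}_s^{h}/\langle x_s\rangle$ (since once $x_s$ joins the ideal, $x_s \cdot g_{h(s),s}$ becomes redundant, and the ideal collapses to the one defining $\mathcal{A}_s^{h}/\langle x_s\rangle$).

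The boundary case $s = \mathsf{p}(h)$ must be treated separately because $g_{h(s),s}$ is then the nonzero constant $s$, so Lemma~\ref{lemma:key} does not directly apply. Here $\mathcal{A}_{\mathsf{p}(h)+1}^h = 0$ so the injection is trivial; moreover Lemma~\ref{lemma: equal ideals} rewrites $f_{h(\mathsf{p}(h)),\mathsf{p}(h)}$ as $\mathsf{p}(h)\cdot x_{\mathsf{p}(h)}$, showing that $x_{\mathsf{p}(h)}$ already lies in the defining ideal of $\mathcal{A}_{\mathsf{p}(h)}^h$. The exact sequence degenerates to the tautology $0 \to 0 \to \mathcal{A}_{\mathsf{p}(h)}^h \to \mathcal{A}_{\mathsf{p}(h)}^h \to 0$. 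Finally, the filtration~\eqref{eq: filtration prop} is obtained by chaining the inclusions $\times x_s : \mathcal{A}_{s+1}^h \hookrightarrow \mathcal{A}_s^h$ for $s=1,\ldots,\mathsf{p}(h)$. The most delicate step is verifying that the hypotheses of Lemma~\ref{lemma:key} are met after reordering, but since Lemma~\ref{lemma:4-4}(i) was designed precisely for this situation, the obstacle is essentially bookkeeping.
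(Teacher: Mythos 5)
Your proof is correct and follows essentially the same route as the paper: rewrite the presentation of $\mathcal{A}_s^h$ via Lemma~\ref{lemma: equal ideals}, invoke Lemma~\ref{lemma:4-4}(i) for regularity, and apply Lemma~\ref{lemma:key} with the factorization $x_s\cdot g_{h(s),s}$. The one difference is that you are somewhat more careful than the paper in two places. First, you explicitly reorder the regular sequence via Lemma~\ref{lemma: perm} so that the factorizable element sits in the last slot, which is what the statement of Lemma~\ref{lemma:key} formally requires; the paper leaves this step implicit. Second, you isolate the boundary case $s=\mathsf{p}(h)$, correctly observing that there $g_{h(s),s}=\mathsf{p}(h)$ is a nonzero constant, so the positive-degree hypothesis of Lemma~\ref{lemma:key} fails and one must instead note directly that $\mathcal{A}_{\mathsf{p}(h)+1}^h=0$ and that $x_{\mathsf{p}(h)}$ already lies in the defining ideal of $\mathcal{A}_{\mathsf{p}(h)}^h$, making the sequence degenerate. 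The paper applies Lemma~\ref{lemma:key} uniformly without flagging this case. Both points are minor, but your version closes genuine (if small) gaps in the published argument.
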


\begin{proof} 
Recall that by definition 
\[
\mathcal{A}_{s+1}^{h} := \mathcal{R}/\langle g_{h(1),1},\ldots,g_{h(s),s},f_{h(s+1),s+1},\ldots,f_{h(n),n} \rangle 
\]
and 
\[
\mathcal{A}_s^{h} := \mathcal{R}/\langle g_{h(1),1},\ldots,g_{h(s-1),s-1},f_{h(s),s},\ldots,f_{h(n),n} \rangle.
\]
We know from Lemma~\ref{lemma: equal ideals} that  
\[
\mathcal{A}_s^{h} = \mathcal{R}/\langle g_{h(1),1}, \cdots, g_{h(s-1),s-1}, x_s \cdot g_{h(s),s}, f_{h(s+1),s+1},\cdots, f_{h(n),n} \rangle.
\]
Moreover, from Lemma~\ref{lemma:4-4} we know that $\{g_{h(1),1}, g_{h(2),2}, \ldots, g_{h(s-1),s-1}, x_s \cdot g_{h(s),s}, f_{h(s+1),s+1}, \ldots, f_{h(n),n}\}$ is a regular sequence.  
Now by Lemma~\ref{lemma:key} we conclude that 
the map 
\[
\mathcal{A}_{s+1}^{h} 
\xrightarrow{\times x_{s}} 
\mathcal{R}/\langle g_{h(1),1}, \ldots, g_{h(s-1),s-1}, x_s \cdot g_{h(s),s}, f_{h(s+1),s+1},\ldots,f_{h(n),n} \rangle
\]
which is defined by multiplication by $x_s$ 
is well-defined and injective. Thus the first statement of the proposition is proved. 
The exactness of~\eqref{equation:exact} is given by the exactness of~\eqref{eq: key exact seq}. 
The filtration~\eqref{eq: filtration prop} follows immediately by considering the injections in sequence. 
\end{proof}

\section{Proof of main theorem, part 2: the quotient ring}
\label{section:quotient ring}

Proposition~\ref{prop: first half of main theorem} of the previous section proves the first part of Theorem~\ref{theorem: main}. In this section, we prove the second half. 
Specifically, we give a description of the quotient ring $\mathcal{A}_s^{h}/\langle x_s \rangle$ as an analogous ring that is associated to a ``smaller'' Hessenberg function from $[n-1]$ to $[n-1]$. Before continuing we note that from the exact sequence~\eqref{equation:exact} we know that there exists an isomorphism 
\[
\mathcal{A}_s^{h}/\mathcal{A}_{s+1}^{h} \cong \mathcal{A}_s^{h}/\langle x_s \rangle
\]
where by slight abuse of notation 
we denote by $\mathcal{A}_{s+1}^{h}$ the image of $\mathcal{A}_{s+1}^{h}$ in $\mathcal{A}_s^{h}$ under the map $\times x_s$.  We will therefore use $\mathcal{A}_s^{h}/\mathcal{A}_{s+1}^{h}$ and $\mathcal{A}_s^{h}/\langle x_s \rangle$ interchangeably in the discussion that follows. 
We have the following. 

\begin{proposition} \label{proposition:5-1}
Let $h$ be a Hessenberg function and $1 \leq s \leq \mathsf{p}(h)$. Then the following ring isomorphism holds
\begin{equation}\label{eq: quotient ring iso}
\mathcal{A}_s^{h}/\mathcal{A}_{s+1}^{h} \cong \mathcal{A}_{r_{s}}^{h^{(s)}}
\end{equation}
where $r_s$ is defined in~\eqref{eq: def rs}. 
Here, the ring on the RHS is  
$$
\Q[y_1,\ldots,y_{n-1}]/\langle g_{h^{(s)}(1),1}(y),\ldots,g_{h^{(s)}(r_{s}-1),r_{s}-1}(y),f_{h^{(s)}(r_{s}),r_{s}}(y),\ldots,f_{h^{(s)}(n-1),n-1}(y) \rangle 
$$
and the isomorphism of~\eqref{eq: quotient ring iso} is realized by sending $x_m$ to $y_m$ for $1 \leq m \leq s-1$ and $x_m$ to $y_{m-1}$ for $s+1 \leq m \leq n$.
 In particular, if $s=\mathsf{p}(h)$, then we have
$$
\mathcal{A}_{\mathsf{p}(h)}^{h} \cong \mathcal{A}_{r_{\mathsf{p}(h)}}^{h^{(\mathsf{p}(h))}}.
$$
\end{proposition}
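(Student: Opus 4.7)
The plan is to use the exact sequence from Proposition~\ref{prop: first half of main theorem} to identify $\mathcal{A}_s^{h}/\mathcal{A}_{s+1}^{h}$ with $\mathcal{A}_s^{h}/\langle x_s\rangle$, and then establish the ring isomorphism~\eqref{eq: quotient ring iso} by analyzing the image of each defining relation of $\mathcal{A}_s^h$ under the map that sets $x_s$ to zero. The substitution $x_m \mapsto y_m$ for $m<s$ and $x_m \mapsto y_{m-1}$ for $m>s$ induces an isomorphism $\mathcal{R}/\langle x_s\rangle \cong \Q[y_1,\ldots,y_{n-1}]$, under which $\mathcal{A}_s^h/\langle x_s\rangle$ becomes the quotient of $\Q[y_1,\ldots,y_{n-1}]$ by the ideal $J_A$ generated by the images of the $g_{h(j),j}$ (for $1 \leq j \leq s-1$) and $f_{h(j),j}$ (for $s \leq j \leq n$). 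It therefore suffices to show $J_A$ coincides with the defining ideal $J_B$ of $\mathcal{A}_{r_s}^{h^{(s)}}$.

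The first step is a direct case-by-case computation of each generator of $J_A$. For $1 \leq j < r_s$, the polynomial $g_{h(j),j}$ does not involve $x_s$ (since $h(j) < s$), and its image is $g_{h^{(s)}(j),j}(y)$, using $h^{(s)}(j) = h(j)$ in this range. For $r_s \leq j \leq s-1$, the product defining $g_{h(j),j}$ contains the factor $(x_k - x_s)$, which collapses to $x_k$ when $x_s = 0$; a straightforward reindexing shows the image equals $f_{h^{(s)}(j), j}(y)$, since $h^{(s)}(j) = h(j) - 1$ here. For $j = s$, the summand indexed by $k = s$ in $f_{h(s),s}$ vanishes because of its trailing $x_k$ factor, and the image works out to $f_{h(s)-1, s-1}(y)$. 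For $s < j \leq n$ the $k = s$ summand again vanishes, and the image becomes $f_{h^{(s)}(j-1), j-1}(y)$.

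The inclusion $J_B \subseteq J_A$ then follows essentially immediately from this list of generators. The reverse inclusion $J_A \subseteq J_B$ reduces to showing that the ``extra'' polynomial $f_{h(s)-1, s-1}(y)$ (interpreted as $0$ when $s=1$) coming from the $j=s$ case lies in $J_B$. When $s = \mathsf{p}(h)$, the equality $h(s) = s$ combined with monotonicity forces $h(s-1) = s$, so $f_{h(s)-1, s-1}(y) = f_{h^{(s)}(s-1), s-1}(y)$ is already a generator of $J_B$. When $s < \mathsf{p}(h)$, one has $h(s) - 1 \geq h(s-1) - 1 = h^{(s)}(s-1)$ by monotonicity, so Lemma~\ref{lemma:4-2} applied to $h^{(s)}$ yields $f_{h(s)-1, s-1}(y) \in \langle f_{h^{(s)}(1),1}(y), \ldots, f_{h^{(s)}(s-1), s-1}(y)\rangle$. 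It remains to check that this auxiliary $f$-ideal is contained in $J_B$: for $r_s \leq j \leq s-1$ the element $f_{h^{(s)}(j),j}(y)$ is already a generator of $J_B$, while for $1 \leq j \leq r_s-1$ I would apply Lemma~\ref{lemma:4-1} to write $f_{h^{(s)}(j),j}(y) \equiv y_j \cdot g_{h^{(s)}(j),j}(y) \pmod{g_{h^{(s)}(j), j-1}(y)}$ and then Lemma~\ref{lemma:4-2} (again applied to $h^{(s)}$) to see that $g_{h^{(s)}(j),j-1}(y) \in \langle g_{h^{(s)}(1),1}(y), \ldots, g_{h^{(s)}(j-1),j-1}(y)\rangle$; both terms then lie in $J_B$, hence so does $f_{h^{(s)}(j),j}(y)$. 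The main technical hurdle will be precisely this reconciliation of the off-by-one polynomial at column $s-1$, which Lemmas~\ref{lemma:4-1} and~\ref{lemma:4-2} together resolve cleanly.
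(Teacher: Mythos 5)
Your proposal is correct and takes essentially the same approach as the paper: you identify $\mathcal{A}_s^h/\mathcal{A}_{s+1}^h$ with $\mathcal{A}_s^h/\langle x_s\rangle$, compute the images of the defining relations case by case under the substitution $x_s\mapsto 0$, and then use Lemmas~\ref{lemma:4-1}/\ref{lemma:4-2} (whose combination is exactly Lemma~\ref{lemma:4-3}, which the paper cites directly) to absorb the redundant generator at column $s-1$. The only cosmetic difference is your case split on $s=\mathsf{p}(h)$ versus $s<\mathsf{p}(h)$, which is unnecessary since the second argument already covers both cases.
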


Before proving Proposition~\ref{proposition:5-1}, we prove the following lemma, which is a simple consequence of Lemmas~\ref{lemma:4-1} and~\ref{lemma:4-2}.

\begin{lemma} \label{lemma:4-3}
Let $h: [n] \to [n]$ be a Hessenberg function.
Then for $1 \leq m \leq n$ we have 
\begin{align*}
f_{h(m),m} \in \langle g_{h(1),1},\ldots,g_{h(m),m} \rangle.
\end{align*}
In particular, we have
$$
\langle f_{h(1),1}, f_{h(2),2}, \ldots, f_{h(m),m} \rangle \subset \langle g_{h(1),1}, g_{h(2),2}, \ldots, g_{h(m),m} \rangle
$$
for $1 \leq m \leq n$.
\end{lemma}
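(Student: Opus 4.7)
The plan is to apply Lemma~\ref{lemma:4-1} directly with $i = h(m)$ and $j = m$, then use Lemma~\ref{lemma:4-2} to absorb the ``remainder'' term into the ideal generated by $g_{h(1),1},\ldots,g_{h(m-1),m-1}$.

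More precisely, I would first dispose of the base case $m = 1$: by our convention $g_{h(1),0} = 0$, so Lemma~\ref{lemma:4-1} collapses to the honest equality $f_{h(1),1} = x_1 \cdot g_{h(1),1}$, which immediately gives $f_{h(1),1} \in \langle g_{h(1),1}\rangle$. For $m \geq 2$, Lemma~\ref{lemma:4-1} yields
\[
f_{h(m),m} - x_m \cdot g_{h(m),m} \in \langle g_{h(m),m-1}\rangle.
\]
Since $h$ is a Hessenberg function we have $h(m) \geq h(m-1)$, so Lemma~\ref{lemma:4-2} (applied with $j = m$ and $i = h(m)$) shows that
\[
g_{h(m),m-1} \in \langle g_{h(1),1},\ldots,g_{h(m-1),m-1}\rangle.
\]
Combining these two facts gives $f_{h(m),m} - x_m \cdot g_{h(m),m} \in \langle g_{h(1),1},\ldots,g_{h(m-1),m-1}\rangle$, and adding the term $x_m \cdot g_{h(m),m}$ back in places $f_{h(m),m}$ inside $\langle g_{h(1),1},\ldots,g_{h(m),m}\rangle$, as required.

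The ``in particular'' statement is then immediate: applying the first assertion to each $k$ with $1 \leq k \leq m$ shows that every generator $f_{h(k),k}$ of the left-hand ideal lies in $\langle g_{h(1),1},\ldots,g_{h(k),k}\rangle \subseteq \langle g_{h(1),1},\ldots,g_{h(m),m}\rangle$, and the containment of ideals follows.

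There is no real obstacle here; the entire content of the lemma is a one-line bookkeeping consequence of the two preceding lemmas. The only detail to verify carefully is that the hypothesis $h(j-1) \leq i$ of Lemma~\ref{lemma:4-2} is met when we set $j = m$ and $i = h(m)$, and this is exactly the monotonicity condition built into the definition of a Hessenberg function.
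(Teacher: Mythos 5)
Your proof is correct and follows essentially the same route as the paper's: apply Lemma~\ref{lemma:4-1} to write $f_{h(m),m} \in \langle g_{h(m),m}, g_{h(m),m-1}\rangle$, then use the monotonicity $h(m) \geq h(m-1)$ together with Lemma~\ref{lemma:4-2} to absorb $g_{h(m),m-1}$ into $\langle g_{h(1),1},\ldots,g_{h(m-1),m-1}\rangle$. The only cosmetic difference is that you treat $m=1$ as a separate base case, whereas the paper leaves that implicit.
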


\begin{proof}
From Lemma~\ref{lemma:4-1} we conclude that 
\[
f_{h(m),m} \in \langle g_{h(m),m},g_{h(m),m-1} \rangle. 
\]
Furthermore, since $h(m) \geq h(m-1)$ by the definition of Hessenberg functions, from Lemma~\ref{lemma:4-2} we can conclude that 
\[
g_{h(m), m-1} \in \langle g_{h(1),1}, \ldots, g_{h(m-1), m-1} \rangle.
\]
Putting the above together we obtain that 
\[
f_{h(m),m} \in \langle g_{h(1),1}, \ldots, g_{h(m-1),m-1}, g_{h(m),m} \rangle
\]
as was to be shown. The second statement follows immediately. 
\end{proof}

We can now prove Proposition~\ref{proposition:5-1}.

\begin{proof}[Proof of Proposition~\ref{proposition:5-1}]

By the exact sequence~\eqref{equation:exact}, we know that 
\[
\mathcal{A}_s^{h}/\mathcal{A}_{s+1}^{h} \cong \mathcal{A}_s^{h}/\langle x_s \rangle.
\]
Note that in the special case $s=\mathsf{p}(h)$, the ring $\mathcal{A}_{\mathsf{p}(h)+1}^h$ is the zero ring, so the above statement is equivalent to $\mathcal{A}_{\mathsf{p}(h)}^h \cong \mathcal{A}_{\mathsf{p}(h)}^h/\langle x_{\mathsf{p}(h)} \rangle$, and the claim of the proposition is that $\mathcal{A}_{\mathsf{p}(h)}^h \cong  \mathcal{A}_{r_{\mathsf{p}(h)}}^{h^{(\mathsf{p}(h))}}$. 
Except for this difference in the statement of the claim, the argument we give below works for both the special case $s=\mathsf{p}(h)$ and for the other cases when $s<\mathsf{p}(h)$. 

To prove~\eqref{eq: quotient ring iso}, we define an explicit map $\varphi: \mathcal{A}_s^{h}/\langle x_s \rangle \to 
\mathcal{A}^{h^{(s)}}_{r_s}$. As stated in the statement of the theorem, we think of $\mathcal{A}^{h^{(s)}}_{r_s}$ as being realized as a quotient of the ring $\Q[y_1, y_2,\ldots, y_{n-1}]$ by the ideal 
\begin{equation}\label{eq: target ideal} 
\langle g_{h^{(s)}(1),1}(y),\ldots,g_{h^{(s)}(r_{s}-1),r_{s}-1}(y),f_{h^{(s)}(r_{s}),r_{s}}(y),\ldots,f_{h^{(s)}(n-1),n-1}(y) \rangle.
\end{equation}
We define $\varphi(x_m) = y_m$ if $1 \leq m \leq s-1$ and we define $\varphi(x_m) = y_{m-1}$ if $s+1 \leq m \leq n$. (Here by slight abuse of notation we denote by $x_j$ and $y_j$ their corresponding images in the appropriate quotient rings.) We define $\varphi(x_s)=0$. 
We need to prove that $\varphi$ is a well-defined ring map, and that it is an isomorphism. We first check well-definedness. To do this, it suffices to check that $\varphi$ takes the defining relations of $\mathcal{A}_s^{h}/\langle x_s \rangle$ to the ideal~\eqref{eq: target ideal}. 
 In this context we may view the defining relations of $\mathcal{A}_s^{h}/\langle x_s \rangle$ to be the polynomials \[
 g_{h(1),1}, g_{h(2),2}, \cdots, g_{h(s-1),s-1}, f_{h(s),s}, \cdots, f_{h(n),n}\]
  considered as polynomials in $x_1,\ldots,x_n$, where we additionally set $x_s=0$.

We take cases. In each case, we must consider what happens to the $m$-th generator from the list above.

\smallskip
\noindent \textbf{Case (i):}  Suppose that $1 \leq m \leq r_{s}-1$. 
Recall that $r_s = \mathrm{min} \{m \in [n] \, \mid \, h(m) \geq s\}$ and from Lemma~\ref{lemma: rs} we have $r_s \leq s$.
Hence $1 \leq m \leq r_s - 1 \leq s-1$. 
Moreover, by definition of $r_s$, if $m \leq r_s-1$ then $h(m) < s$. 
By the definition of $\varphi$ we conclude that $\varphi(x_k)=y_k$ for all $1 \leq k \leq h(m)$. Moreover, by the definition of $h^{(s)}$ we have $h^{(s)}(m) = h(m)$ in this case. Thus we have 
\begin{align*}
\varphi(g_{h(m),m}(x)) & = \varphi \left(\sum_{k=1}^m \left( \prod_{\ell=m+1}^{h(m)} (x_k-x_\ell) \right) \right) \\
& =\sum_{k=1}^m \left( \prod_{\ell=m+1}^{h^{(s)}(m)} (y_k-y_\ell) \right) =g_{h^{(s)}(m),m}(y). 
\end{align*}
Thus $\varphi$ takes $g_{h(m),m}(x)$ to an element in~\eqref{eq: target ideal} as desired. 

\smallskip

\noindent \textbf{Case (ii):} Suppose that $r_{s} \leq m \leq s-1$. Then $h(m) \geq s$.
As in the above case we may compute 
\begin{align*}
\varphi(g_{h(m),m}(x))&=\varphi\left( \sum_{k=1}^m \left( \prod_{\ell=m+1}^{h(m)} (x_k-x_\ell) \right) \right) \\
&\equiv \sum_{k=1}^m \left[ \left( \prod_{\ell=m+1}^{s-1} (\varphi(x_k)-\varphi(x_\ell)) \right) \varphi(x_k) \left( \prod_{\ell=s+1}^{h(m)} (\varphi(x_k)-\varphi(x_\ell)) \right) \right] \\
& \phantom{move over}  \ \ \ \textup{ since $\varphi(x_s)=0$ } \\
&=\sum_{k=1}^m \left[ \left( \prod_{\ell=m+1}^{s-1} (y_k-y_\ell) \right) y_k \left( \prod_{\ell=s+1}^{h(m)} (y_k-y_{\ell-1}) \right) \right]  \ \ \  \textup{ by definition of $\varphi$} \\
&=\sum_{k=1}^m \left( \prod_{\ell=m+1}^{h(m)-1} (y_k-y_\ell) \right) y_k \\
& = f_{h(m)-1,m}(y) \\ 
&=f_{h^{(s)}(m),m}(y) \ \ \ \textup{ by definition of $h^{(s)}$ }
\end{align*}
since $r_s \leq m \leq s-1$. 
Since $m \geq r_s$ by assumption, we know that $f_{h^{(s)}(m),m}(y)$ is an element of~\eqref{eq: target ideal}, as desired. 

\smallskip

\noindent \textbf{Case (iii):}  Suppose that $m=s$. Then we have 
\begin{align*} 
\varphi(f_{h(s),s}(x)) & = \varphi\left( \sum_{k=1}^s \left(\prod_{\ell=s+1}^{h(s)} (x_k - x_\ell) \right) x_k \right) \\
& = \varphi\left( \sum_{k=1}^{s-1} \left(\prod_{\ell=s+1}^{h(s)} (x_k - x_\ell) \right) x_k \right) \ \ \ \textup{ since 
$\varphi(x_s) = 0$ } \\
& = \sum_{k=1}^{s-1} \left( \prod_{\ell=s+1}^{h(s)} (\varphi(x_k) - \varphi(x_\ell)) \right) \varphi(x_k) \\
& = \sum_{k=1}^{s-1} \left( \prod_{\ell=s+1}^{h(s)} (y_k - y_{\ell-1}) \right) y_k \ \ \ \textup{ by definition of $\varphi$} \\
& = \sum_{k=1}^{s-1} \left( \prod_{\ell=s}^{h(s)-1} (y_k - y_{\ell}) \right) y_k \\
& = f_{h(s)-1, s-1}(y). \\ 
\end{align*} 
Now note that $h^{(s)}(s-1) = h(s-1)-1$ by definition of $h^{(s)}$ so $h(s)-1 \geq h(s-1)-1 = h^{(s)}(s-1)$. Therefore by Lemma~\ref{lemma:4-2} we may conclude $\varphi(f_{h(s),s}(x)) = f_{h(s)-1,s-1}(y) \in \langle f_{h^{(s)}(1),1}, \cdots, f_{h^{(s)}(s-1), s-1} \rangle$. 
Applying Lemma~\ref{lemma:4-3} we can see that 
\[
\langle f_{h^{(s)}(1),1}, \ldots, f_{h^{(s)}(r_s-1),r_s-1} \rangle \subseteq \langle g_{h^{(s)}(1), 1}, \cdots, g_{h^{(s)}(r_s-1), r_s-1} \rangle 
\]
where all polynomials are in the $y$ coordinates. Therefore 
\[
\langle f_{h^{(s)}(1),1}, \ldots, f_{h^{(s)}(r_s-1),r_s-1} \rangle \subseteq \langle g_{h^{(s)}(1), 1}, \cdots, g_{h^{(s)}(r_s-1), r_s-1}, f_{h^{(s)}(r_s), r_s}, \cdots, f_{h^{(s)}(n-1),n-1} \rangle 
\]
where the RHS is the defining ideal of $\mathcal{A}_{r_s}^{h^{(s)}}$, so $\varphi(f_{h(s),s})$ also lies in this ideal, as desired. 

\smallskip

\noindent \textbf{Case (iv):} Suppose that $s+1 \leq m \leq n$. (Note that in the special case when $s=\mathsf{p}(h)$, this case is vacuous.) Then 
\begin{align*}
\varphi(f_{h(m),m}(x))&= \varphi\left( \sum_{k=1}^m \left( \prod_{\ell=m+1}^{h(m)} (x_k-x_\ell) \right)x_k \right) \\
&= \varphi\left( \sum_{k=1}^{s-1} \left( \prod_{\ell=m+1}^{h(m)} (x_k-x_\ell) \right)x_k \right) + 
\varphi \left( \sum_{k=s+1}^m \left( \prod_{\ell=m+1}^{h(m)} (x_k-x_\ell) \right)x_k \right) \\
 & \phantom{move over} \ \ \ \textup{ since $\varphi(x_s) = 0$ } \\
&= \sum_{k=1}^{s-1} \left( \prod_{\ell=m+1}^{h(m)} (y_k-y_{\ell-1}) \right)y_k+\sum_{k=s+1}^m \left( \prod_{\ell=m+1}^{h(m)} (y_{k-1}-y_{\ell-1}) \right)y_{k-1} \\
 & \phantom{move over} \ \ \ \textup{ by definition of $\varphi$} \\
&= \sum_{k=1}^{m-1} \left( \prod_{\ell=m+1}^{h(m)} (y_k-y_{\ell-1}) \right)y_k \\
&= \sum_{k=1}^{m-1} \left( \prod_{\ell=m}^{h(m)-1} (y_k-y_{\ell}) \right)y_k \\
&= f_{h(m)-1, m-1}(y) \\ 
& = f_{h^{(s)}(m-1), m-1}(y) \ \ \ \textup{ by definition of $h^{(s)}$ since $s\leq m-1 \leq n-1$.} \\
\end{align*}
Hence $\varphi(f_{h(m),m}(x))$ is also contained in~\eqref{eq: target ideal}.

 \smallskip
 
This completes the proof that $\varphi$ is well-defined.  In fact, in cases (i), (ii) and (iv) above 
we have proven something stronger. Namely, we have shown that 
\begin{equation}\label{eq: correspondence} 
\begin{split} 
\varphi(g_{h(m),m}(x)) & = \begin{cases}
g_{h^{(s)}(m),m}(y) \ \ \ {\rm if} \ 1 \leq m \leq r_s-1, \\
f_{h^{(s)}(m),m}(y) \ \ \ {\rm if} \ r_s \leq m \leq s-1, \\
\end{cases} \\
\varphi( f_{h(m),m}(x) ) & = f_{h^{(s)}(m-1),m-1}(y) \ \ \ {\rm if} \ s+1 \leq m \leq n. 
\end{split} 
\end{equation}

To see that $\varphi$ is an isomorphism, note first that~\eqref{eq: correspondence} sets up an exact correspondence between generators of~\eqref{eq: target ideal} and generators of the ideal for $\mathcal{A}^{h}_s$. In particular, by taking preimages, it follows straightforwardly that any element lying in the kernel of $\varphi$ (viewed as a map from $\Q[x_1,\ldots,x_n]$ to $\mathcal{A}^{h^{(s)}}_{r_s}$) already lies in the ideal of relations for $\mathcal{A}^{h}_s$. This shows that $\varphi: \mathcal{A}^{h}_s/\langle x_s\rangle \to \mathcal{A}^{h^{(s)}}_{r_s}$ is injective. On the other hand, $\varphi$ is clearly surjective, since its generators $y_1,\ldots, y_{n-1}$ all lie in the image of $\varphi$ by construction. 
This shows that $\varphi$ is an isomorphism, as desired. 
\end{proof}

This completes the proof of Theorem~\ref{theorem: main}.

\section{An inductive formula for Poincar\'e polynomials} 
\label{section: inductive poincare}

In the previous two sections, we proved the main result of this manuscript, namely Theorem~\ref{theorem: main}. 
In the remainder of 
this paper, we derive several consequences of this result.  For what follows, it may be useful to visualize
our main result as a commutative diagram as follows: 

\begin{equation}\label{eq: filtration and quotients} 
\xymatrix @C=1.8pc{ 
0 \ar[r] & \mathcal{A}_{\mathsf{p}(h)}^{h} \ar[rr]^{\times x_{\mathsf{p}(h)-1}} \ar[d]^{\cong} && \mathcal{A}_{\mathsf{p}(h)-1}^{h} \ar[r]^{\times x_{\mathsf{p}(h)-2}} \ar[d] &  \cdots 
& \ar[r]^{\times x_2} & \mathcal{A}_2^{h} \ar[r]^{\times x_1} \ar[d] & \mathcal{A}^{h}_1 = \mathcal{R}/I_h \ar[d] \\
 & \mathcal{A}^{h^{(\mathsf{p}(h))}}_{r_{\mathsf{p}(h)}} && \mathcal{A}_{r_{\mathsf{p}(h)-1}}^{h^{(\mathsf{p}(h)-1)}} & & & \mathcal{A}_{r_2}^{h^{(2)}} &   \mathcal{A}_{r_1}^{h^{(1)}} 
}
\end{equation} 
 where the horizontal arrows are inclusions and the vertical arrows are surjections.

From~\eqref{eq: filtration and quotients} it is easily seen that we can express the Hilbert series of $\mathcal{R}/I_h \cong H^*(\Hess(\mathsf{N},h);\Q)$ inductively in terms of the Hilbert series of the $\mathcal{A}_{r_s}^{h^{(s)}}$ for varying $s$. 
Specifically, from Proposition~\ref{prop: regular seq in terms of Hilbert} and Lemma~\ref{lemma:4-4} we know that the polynomial 
\[
F_s^{h}(t) := \left( \prod_{m=1}^{s-1} (1+t+t^2 + \cdots + t^{h(m)-m-1}) \right) 
\left( \prod_{m=s}^n (1+t+t^2 + \cdots + t^{h(m)-m}) \right)
\]
is the Hilbert series $F(\mathcal{A}^{h}_s, t)$ of the ring $\mathcal{A}^{h}_s$. In particular, when $s=1$, we have 
\[
F_1^{h}(t) = F(\mathcal{A}^{h}_1, t) = \mathrm{Poin}(\Hess(\mathsf{N},h), \sqrt{t})
\]
where the RHS denotes the Poincar\'e polynomial of the variety $\Hess(\mathsf{N},h)$ in the variable $t$. (The square root $\sqrt{t}$ is due to the fact that $\Hess(\mathsf{N},h)$ has no odd-degree cohomology.) 
From the diagram~\eqref{eq: filtration and quotients} and noting that the multiplication maps shifts the degrees, we immediately obtain the following corollary of Theorem~\ref{theorem: main}. 

\begin{corollary}\label{corollary: inductive Poincare}
Let $h: [n] \to [n]$ be a Hessenberg function. Then 
\begin{equation}\label{eq: inductive poincare formula}
\mathrm{Poin}(\Hess(\mathsf{N},h),\sqrt{t}) =  t^{\mathsf{p}(h)-1} F_{r_{\mathsf{p}(h)}}^{h^{(\mathsf{p}(h))}}(t) +  t^{\mathsf{p}(h)-2} F_{r_{\mathsf{p}(h)-1}}^{h^{(\mathsf{p}(h)-1)}}(t) + \cdots + F_{r_1}^{h^{(1)}}(t).
\end{equation}
\end{corollary}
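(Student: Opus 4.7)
The plan is to derive the formula by applying the short exact sequence \eqref{equation:exact} from Proposition~\ref{prop: first half of main theorem} repeatedly and tracking Hilbert series. Since Hilbert series are additive on short exact sequences of graded vector spaces, the exact sequence
\[
0 \to \mathcal{A}_{s+1}^{h} \xrightarrow{\times x_s} \mathcal{A}_s^{h} \to \mathcal{A}_s^{h}/\langle x_s\rangle \to 0
\]
yields the relation $F(\mathcal{A}_s^{h},t) = F(\mathrm{image}(\times x_s),t) + F(\mathcal{A}_s^{h}/\langle x_s\rangle, t)$. The first observation I would record is that multiplication by $x_s$ shifts grading by $1$ in the polynomial grading (i.e.\ the grading in which each $x_i$ has degree $1$), so $F(\mathrm{image}(\times x_s),t) = t \cdot F(\mathcal{A}_{s+1}^{h},t)$. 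Combined with the ring isomorphism $\mathcal{A}_s^{h}/\mathcal{A}_{s+1}^{h} \cong \mathcal{A}_{r_s}^{h^{(s)}}$ from Proposition~\ref{proposition:5-1}, this gives the recursion
\[
F_s^{h}(t) = t \cdot F_{s+1}^{h}(t) + F_{r_s}^{h^{(s)}}(t) \quad \textup{for all } 1 \leq s \leq \mathsf{p}(h)-1.
\]

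Next I would iterate this recursion starting from $s=1$, unfolding one term at a time, to obtain
\[
F_1^{h}(t) = \sum_{s=1}^{\mathsf{p}(h)-1} t^{s-1} F_{r_s}^{h^{(s)}}(t) + t^{\mathsf{p}(h)-1} F_{\mathsf{p}(h)}^{h}(t).
\]
To handle the final term, I would invoke the special case $s=\mathsf{p}(h)$ of Proposition~\ref{proposition:5-1}, which gives $\mathcal{A}_{\mathsf{p}(h)}^{h} \cong \mathcal{A}_{r_{\mathsf{p}(h)}}^{h^{(\mathsf{p}(h))}}$, hence $F_{\mathsf{p}(h)}^{h}(t) = F_{r_{\mathsf{p}(h)}}^{h^{(\mathsf{p}(h))}}(t)$. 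Substituting produces
\[
F_1^{h}(t) = \sum_{s=1}^{\mathsf{p}(h)} t^{s-1} F_{r_s}^{h^{(s)}}(t),
\]
which is precisely the right-hand side of \eqref{eq: inductive poincare formula}.

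Finally, I would conclude by identifying $F_1^{h}(t)$ with the Poincaré polynomial. By Theorem~\ref{theorem: cohomology} we have $\mathcal{A}_1^{h} \cong H^*(\Hess(\mathsf{N},h))$ with the cohomological grading being twice the polynomial grading; since $\Hess(\mathsf{N},h)$ has vanishing odd cohomology (a standard fact for regular nilpotent Hessenberg varieties), the Hilbert series in the polynomial grading equals $\mathrm{Poin}(\Hess(\mathsf{N},h),\sqrt{t})$, as asserted in the text just prior to the corollary statement. This completes the argument.

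I do not foresee a significant obstacle: all of the substantive content already resides in Theorem~\ref{theorem: main}, and the only care required is in correctly tracking the degree shift introduced by multiplication by $x_s$ in the polynomial grading. One small point that I would verify explicitly is that the image of $\mathcal{A}_{s+1}^{h}$ under $\times x_s$ really inherits the shifted Hilbert series from $\mathcal{A}_{s+1}^{h}$; this is immediate from the injectivity of $\times x_s$ established in Proposition~\ref{prop: first half of main theorem} together with the fact that $x_s$ is homogeneous of degree $1$.
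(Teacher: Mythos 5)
Your proposal is correct and follows essentially the same approach as the paper: both use the filtration from Theorem~\ref{theorem: main}, the identification of the graded pieces with $\mathcal{A}_{r_s}^{h^{(s)}}$, and the degree shift arising from multiplication by $x_s$. You have simply unfolded the short exact sequences explicitly and tracked Hilbert series term by term, which spells out the paper's more compressed ``Poincar\'e polynomial of the associated graded ring'' argument.
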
 

\begin{proof} 
The Poincar\'e polynomial of $\Hess(\mathsf{N},h)$ is the same as the Poincar\'e polynomial of its associated graded ring corresponding to the filtration~\eqref{eq: filtration and quotients}, but each piece of the associated graded ring is isomorphic to a ring of the form $\mathcal{A}_{r_k}^{h^{(k)}}$, whose Poincar\'e polynomial is $F_{r_k}^{h^{(k)}}(t)$. The shift in degrees, reflected by the multiplication by $t^{k-1}$, is due to the shift in degrees that occurs in~\eqref{eq: filtration and quotients}, since each multiplication by $x_\ell$ increases the degree by $1$. 
\end{proof} 

\begin{remark}
Sommers and the fifth author also gave a similar formula 
\begin{equation} \label{eq:Sommers-Tymoczko}
\mathrm{Poin}(\Hess(\mathsf{N},h),\sqrt{t}) = \prod_{m=1}^n (1+t+\cdots + t^{h(m)-m})
\end{equation}
for the same Poincar\'e polynomial \cite{SommersTymoczko}.  In fact, it is possible to derive our Corollary~\ref{corollary: inductive Poincare} directly from their formula by some straightforward algebraic manipulations. 
 However, it was our filtration which made apparent (to us) the inductive nature of this Poincar\'e polynomial. 
\end{remark}

\begin{example}\label{example: poincare for 2344}
Let $h=(2,3,4,4)$. Then the diagram of the Hessenberg function is

\[\ytableausetup{centertableaux} 
h = (2,3,4,4):  \; \;   \begin{ytableau}
\star &  \star & \star & \star \\ 
\star & \star &\star & \star  \\
 & \star  & \star & \star \\ 
 &  & \star & \star  \\ 
\end{ytableau}
\]
and it is not hard to see that $h^{(1)} = (2,3,3), h^{(2)} = (1,3,3), h^{(3)} = (2,2,3)$ and $h^{(4)} = (2,3,3)$. It is also straightforward to compute that $r_1 = 1, r_2 = 1, r_3 = 2$ and $r_4 = 3$. We then obtain that 
\[
\begin{split} 
F_{r_1}^{h^{(1)}}(t)  = (1+t)^2, \ \ 
F_{r_2}^{h^{(2)}}(t)  = (1+t), \ \
F_{r_3}^{h^{(3)}}(t)  = 1, \ \
F_{r_4}^{h^{(4)}}(t)  = 1. 
\end{split} 
\]
Therefore Corollary~\ref{corollary: inductive Poincare} yields the formula 
\begin{align*}
\mathrm{Poin}(\Hess(\mathsf{N}, (2,3,4,4)),\sqrt{t}) &= t^3 \cdot 1 + t^2 \cdot 1 + t \cdot (1+t) + (1+t)^2 \\
&= (1+t)(1+t)(1+t)
\end{align*}
which is precisely the formula in \eqref{eq:Sommers-Tymoczko}, and where the last equality is obtained by a straightforward computation. 
\end{example}

\section{A monomial basis for $H^*(\Hess(\mathsf{N},h))$}
\label{section:monomial basis}

From the point of view of commutative algebra, it is a natural question to ask whether there exists a \emph{monomial} basis for any ring which is presented explicitly as a quotient ring $\mathcal{R}/I$ of a polynomial ring $\mathcal{R}=\Q[x_1,\ldots, x_n]$ modulo an ideal $I$. More precisely, the question is whether there exists a set of monomials $\{x^\alpha = x_1^{\alpha_1} x_2^{\alpha_2} \cdots x_n^{\alpha_n}\}_{\alpha \in \mathcal{S}}$ for some subset $\mathcal{S} \subseteq \Z_{\geq 0}^n$ such that the images of these monomials in $\mathcal{R}/I$ form an additive basis. Classical examples of such include the ``standard monomial bases" of Gr\"obner theory. 
Another example arises in the study of Springer fibers, which are Hessenberg varieties corresponding to the special case $h=(1,2,\ldots n)$.  These varieties carry an action of the symmetric group on their cohomology groups, and the top-dimensional cohomology group is an irreducible representation; moreover, each irreducible representation of the symmetric group can be obtained in this way.  A monomial basis for the cohomology of type A Springer fibers was defined by De Concini and Procesi in \cite{DeCP}.  Garsia and Procesi  then proved that this basis satisfies a number of remarkable properties, and used it to study the graded character for the Springer representation \cite{GarPro}.

In this section, we use our main theorem to construct a natural monomial basis for $H^*(\Hess(\mathsf{N},h)) \cong \mathcal{R}/I_h$.  In fact, we prove a stronger statement: we construct a monomial basis for any ring of the form $\mathcal{A}_s^{h}$ as introduced in Section~\ref{section:Preliminary}.

We note that (up to a change in conventions) this monomial basis was conjectured by Mbririka in \cite{Mbirika}.   In fact, Mbrika's conjecture was motivated by Garsia and Procesi's work on the cohomology of Springer fibers which was mentioned above, and we may view the basis below as an extension of their monomial basis to the setting of regular nilpotent Hessenberg varieties.  In the special case that $h=(n,n,\ldots,n)$, the monomials we obtain are the usual monomial basis for the cohomology of the flag variety appearing in the literature (e.g. \cite[Proposition 3 in Section 10.2]{Fulton}). We now have the following.

\begin{theorem} \label{theorem: monomial basis}
Let $n$ be a positive integer and let $h: [n] \to [n]$ be a Hessenberg function. Let $s$ be an integer, $1 \leq s \leq \mathsf{p}(h)$. 
Then the (image under the projection map $\mathcal{R} \to \mathcal{A}_s^{h}$ of the) following set of monomials 
$$
\left\{x_1^{i_1} \cdots x_{s-1}^{i_{s-1}} x_{s}^{i_{s}} \cdots x_n^{i_n} \, \left \lvert \,  \begin{array}{l}
0 \leq i_m \leq h(m)-m-1 \ \ \ {\rm if} \ 1 \leq m \leq s-1 \\
0 \leq i_m \leq h(m)-m \ \ \ {\rm if} \ s \leq m \leq n
\end{array} 
\right.\right\}
$$
is an additive basis for $\mathcal{A}_s^{h}$. 
\end{theorem}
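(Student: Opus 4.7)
The plan is to proceed by double induction: an outer induction on $n$, and within it, a downward induction on $s$ from $s=\mathsf{p}(h)$ to $s=1$, leveraging the exact sequence and quotient description of Theorem~\ref{theorem: main}. The outer base case $n=1$ is immediate: the only Hessenberg function is $h=(1)$, we have $\mathcal{A}_1^h \cong \Q$, and the proposed basis is $\{1\}$. Conceptually, we want the short exact sequence~\eqref{equation:exact} together with the isomorphism of Proposition~\ref{proposition:5-1} to let us glue together the two ``smaller'' monomial bases (one for $\mathcal{A}_{s+1}^h$, one for $\mathcal{A}_{r_s}^{h^{(s)}}$) into the desired basis for $\mathcal{A}_s^h$.

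For the inner base case $s=\mathsf{p}(h)$, Proposition~\ref{proposition:5-1} yields the isomorphism $\varphi: \mathcal{A}_{\mathsf{p}(h)}^h \cong \mathcal{A}_{r_{\mathsf{p}(h)}}^{h^{(\mathsf{p}(h))}}$. Since $h(\mathsf{p}(h))=\mathsf{p}(h)$, the proposed exponents force $i_{\mathsf{p}(h)}=0$, so no proposed monomial actually involves $x_{\mathsf{p}(h)}$. Under $\varphi$ such a monomial corresponds to $y_1^{i_1}\cdots y_{\mathsf{p}(h)-1}^{i_{\mathsf{p}(h)-1}} y_{\mathsf{p}(h)}^{i_{\mathsf{p}(h)+1}}\cdots y_{n-1}^{i_n}$, and using~\eqref{eq: def hs} one checks that these are exactly the proposed monomials for $\mathcal{A}_{r_{\mathsf{p}(h)}}^{h^{(\mathsf{p}(h))}}$, which is a basis by the outer inductive hypothesis on $n$.

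For the inner inductive step with $s<\mathsf{p}(h)$, we use the short exact sequence
\[
0 \to \mathcal{A}_{s+1}^h \xrightarrow{\times x_s} \mathcal{A}_s^h \to \mathcal{A}_s^h/\langle x_s\rangle \cong \mathcal{A}_{r_s}^{h^{(s)}} \to 0
\]
from~\eqref{equation:exact} and Proposition~\ref{proposition:5-1}. We partition the proposed monomial set for $\mathcal{A}_s^h$ into monomials with $i_s=0$ and those with $i_s\geq 1$. Applying $\varphi$, the $i_s=0$ family corresponds bijectively to the proposed basis of $\mathcal{A}_{r_s}^{h^{(s)}}$ (a basis by the outer induction on $n$), and thus lifts a basis of the quotient. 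The $i_s\geq 1$ family is precisely $x_s$ times the proposed basis of $\mathcal{A}_{s+1}^h$ (a basis by the downward inner induction), and thus, via the injection $\times x_s$, realizes a basis of the image. The standard splitting of a short exact sequence of $\Q$-vector spaces then assembles these into a basis of $\mathcal{A}_s^h$.

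The main technical hurdle is verifying that the exponent bounds transform correctly under these correspondences. Writing out the three regimes of~\eqref{eq: def hs} for $h^{(s)}$, namely $1\leq m \leq r_s-1$ with $h^{(s)}(m)=h(m)$, $r_s \leq m \leq s-1$ with $h^{(s)}(m)=h(m)-1$, and $s\leq m \leq n-1$ with $h^{(s)}(m)=h(m+1)-1$, one must check that after the substitution $y_m\leftrightarrow x_m$ for $m<s$ and $y_m\leftrightarrow x_{m+1}$ for $m\geq s$, the two-tier bounds $h^{(s)}(m)-m-1$ vs.\ $h^{(s)}(m)-m$ in the proposed basis of $\mathcal{A}_{r_s}^{h^{(s)}}$ translate to the two-tier bounds $h(m)-m-1$ vs.\ $h(m)-m$ in the proposed basis of $\mathcal{A}_s^h$. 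Each individual case is elementary, but the bookkeeping is delicate: the critical point is that the shift of the threshold between tiers from $s$ (in $\mathcal{A}_s^h$) to $r_s$ (in $\mathcal{A}_{r_s}^{h^{(s)}}$) is exactly compensated for by the $-1$ appearing in the middle regime of $h^{(s)}$, and this compatibility is precisely what powers the inductive step.
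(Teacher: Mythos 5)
Your proposal follows the same double-induction strategy the paper uses: outer induction on $n$, inner decreasing induction on $s$, gluing a basis of $\mathcal{A}_{s+1}^h$ (multiplied by $x_s$) to a basis of the quotient $\mathcal{A}_s^h/\langle x_s\rangle \cong \mathcal{A}_{r_s}^{h^{(s)}}$ via the short exact sequence~\eqref{equation:exact}, and translating exponent bounds through the isomorphism of Proposition~\ref{proposition:5-1}. The partition into $i_s = 0$ vs.\ $i_s \geq 1$ and the bookkeeping you describe for the three regimes of $h^{(s)}$ are exactly what appear in the paper.

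There is, however, one small but genuine gap: when you invoke ``the outer inductive hypothesis on $n$'' to get a basis of $\mathcal{A}_{r_s}^{h^{(s)}}$, you must verify that $r_s$ lies in the allowable range $1 \leq r_s \leq \mathsf{p}(h^{(s)})$ for the theorem to apply to the Hessenberg function $h^{(s)}$ on $[n-1]$. This is not automatic; the paper establishes it as a separate Lemma~\ref{lemma: induction step} before the proof. The argument is short (using $r_s \leq s \leq \mathsf{p}(h)$ and the fact that $h^{(s)}(m) = h(m) > m$ for $m < r_s$), but without it the inductive appeal is unjustified. With that check added, your proof is complete and matches the paper's.
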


To make the induction argument work, we need the following lemma. 

\begin{lemma}\label{lemma: induction step}
Let $h: [n] \to [n]$ be a Hessenberg function. Let $s$ be an integer with $1 \leq s \leq \mathsf{p}(h)$. 
Then $r_s \leq \mathsf{p}(h^{(s)})$. 
\end{lemma}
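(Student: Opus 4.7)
The plan is to show that $h^{(s)}(m)\neq m$ for every $m$ with $1\le m<r_s$; by definition of $\mathsf{p}(h^{(s)})$ as the smallest fixed point index, this immediately yields $\mathsf{p}(h^{(s)})\ge r_s$.

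First I would dispose of the trivial case $r_s=1$: here the inequality reduces to $1\le\mathsf{p}(h^{(s)})$, which holds by definition of $\mathsf{p}$. So assume $r_s\ge 2$ and fix any $m$ with $1\le m\le r_s-1$. In this range, the piecewise formula~\eqref{eq: def hs} gives $h^{(s)}(m)=h(m)$, so it suffices to prove $h(m)\neq m$.

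For this I would combine two facts already available. First, by the definition of $r_s$ in~\eqref{eq: def rs}, any $m<r_s$ satisfies $h(m)<s$. Second, by hypothesis $s\le\mathsf{p}(h)$, and by Lemma~\ref{lemma: rs} we also have $r_s\le s$, so $m<r_s\le s\le\mathsf{p}(h)$. Since $\mathsf{p}(h)$ is, by~\eqref{eq: def ph}, the smallest integer at which $h$ takes the identity value, $m<\mathsf{p}(h)$ forces $h(m)\neq m$. Together with the Hessenberg inequality $h(m)\ge m$ from Definition~\ref{definition:Hessenberg function}, this gives the strict inequality $h(m)>m$, and hence $h^{(s)}(m)=h(m)>m$ as needed.

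Putting these observations together, no index $m$ in the range $1\le m\le r_s-1$ is a fixed point of $h^{(s)}$, so $\mathsf{p}(h^{(s)})\ge r_s$. There is really no substantive obstacle in this argument; the whole content is unpacking the three defining conditions (of $r_s$, of $\mathsf{p}(h)$, and of the piecewise formula for $h^{(s)}$) and noting that they interact cleanly on the range $m<r_s$, which is precisely the range on which $h^{(s)}$ agrees with $h$.
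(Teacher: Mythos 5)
Your proof is correct and takes essentially the same route as the paper: reduce to showing $h^{(s)}(m)>m$ for $m<r_s$, note $h^{(s)}(m)=h(m)$ there by~\eqref{eq: def hs}, and use the chain $m<r_s\le s\le\mathsf{p}(h)$ to conclude $h(m)>m$. (The observation that $h(m)<s$ for $m<r_s$ is true but ends up unused in your argument.)
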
 

\begin{proof} 
Since by definition $\mathsf{p}(h^{(s)}) = \min \{ m \, \mid \, h^{(s)}(m)=m\}$, in order to show the claim of the lemma, it suffices to show that for $1 \leq m < r_s$ we have $h^{(s)}(m) > m$. Note that $r_s \leq s$ by 
Lemma~\ref{lemma: rs},
 but by assumption $s \leq \mathsf{p}(h)$, so $r_s \leq \mathsf{p}(h)$. Hence if $m < r_s$ we know $m < \mathsf{p}(h)$, which in turn implies $h(m) > m$ (by definition of $\mathsf{p}(h)$). 
From the definition of $h^{(s)}$ in~\eqref{eq: def hs} we have that for $1 \leq m < r_s$ we have $h^{(s)}(m) = h(m)$. Thus we can conclude $h^{(s)}(m) = h(m) > m$ for $m < r_s$, as desired. 
\end{proof}

We can now prove the theorem.

\begin{proof}[Proof of Theorem~\ref{theorem: monomial basis}]
We prove Theorem~\ref{theorem: monomial basis} by induction on $n$ and decreasing induction on $s$. 

First consider the base case $n=1$. In this case, the only possible Hessenberg function is the identity $h(1)=1$ and $\mathcal{A}^{h}_1 = \Q[x]/\langle f_{1,1} =x \rangle \cong \Q$. A basis is given by $x^{h(1)-1} = x^0=1 \in \Q$. This proves the base case. 

We proceed to the inductive step. Suppose now that $n>1$ and that the claim holds for $n-1$, any Hessenberg function $h': [n-1] \to [n-1]$, and any $s'$ with $1 \leq s' \leq \mathsf{p}(h')$. 
We now use a decreasing induction argument on the index $s$. Consider the base case $s=\mathsf{p}(h)$. 
In this case we know that $\mathcal{A}_{\mathsf{p}(h)}^{h} \cong \mathcal{A}_{r_{\mathsf{p}(h)}}^{h^{(\mathsf{p}(h))}}$ from Proposition~\ref{proposition:5-1}. Moreover, $\mathcal{A}_{r_{\mathsf{p}(h)}}^{h^{(\mathsf{p}(h))}}$ is a ring associated to a Hessenberg function on $[n-1]$.  By Lemma~\ref{lemma: induction step} we may therefore apply the inductive hypothesis, and the following set of monomials 
\begin{equation}\label{eq: inductive basis for s=ph}
\left\{y_1^{i_1} \cdots y_{n-1}^{i_{n-1}} \left| \begin{array}{l}
0 \leq i_m \leq h^{(\mathsf{p}(h))}(m)-m-1 \ \ \ {\rm if} \ 1 \leq m \leq r_{\mathsf{p}(h)}-1 \\
0 \leq i_m \leq h^{(\mathsf{p}(h))}(m)-m \ \ \ {\rm if} \ r_{\mathsf{p}(h)} \leq m \leq n-1
\end{array} 
\right.\right\}
\end{equation} 
is an additive basis of $\mathcal{A}_{r_{\mathsf{p}(h)}}^{h^{(\mathsf{p}(h))}}$. 
Recall that the Hessenberg function $h^{(\mathsf{p}(h))}$ is defined as 
\begin{equation}\label{eq: def h ph m}
h^{(\mathsf{p}(h))}(m) := 
\begin{cases} 
h(m) \ \ \textup{ if } 1 \leq m \leq r_{\mathsf{p}(h)}-1 \\ 
h(m)-1 \ \ \textup{ if } r_{\mathsf{p}(h)} \leq m \leq \mathsf{p}(h)-1 \\ 
h(m+1)-1 \ \ \textup{ if } \mathsf{p}(h) \leq m \leq n-1.
\end{cases} 
\end{equation} 
From this it follows that, under the isomorphism which sends $x_m$ to $y_m$ for $1 \leq m \leq \mathsf{p}(h)-1$ and $x_m$ to $y_{m-1}$ for $\mathsf{p}(h)+1 \leq m \leq n$ (and $x_{\mathsf{p}(h)}$ goes to $0$), the monomials in~\eqref{eq: inductive basis for s=ph} may be identified with the following set of monomials: 
\[
\left\{ x_1^{i_1} \cdots x_{\mathsf{p}(h)-1}^{i_{\mathsf{p}(h)-1}} x_{\mathsf{p}(h)+1}^{i_{\mathsf{p}(h)+1}} \cdots x_n^{i_n} \, \left\lvert  \, 
\begin{array}{l} 
0 \leq i_m \leq h(m)-m-1 \ \ \textup{ if } 1 \leq m \leq \mathsf{p}(h)-1 \\
0\leq i_m \leq h(m)-m \ \ \textup{ if } \mathsf{p}(h)+1 \leq m \leq n
\end{array} 
\right.
\right\}. 
\]
Since $h(\mathsf{p}(h)) = \mathsf{p}(h)$ by definition of $\mathsf{p}(h)$, this is in turn equal to the set 
\[
\left\{ x_1^{i_1} \cdots x_{\mathsf{p}(h)-1}^{i_{\mathsf{p}(h)-1}} x_{\mathsf{p}(h)}^{i_{\mathsf{p}(h)}} x_{\mathsf{p}(h)+1}^{i_{\mathsf{p}(h)+1}} \cdots x_n^{i_n} \, \left\lvert \, 
\begin{array}{l} 
0 \leq i_m \leq h(m)-m-1 \ \ \textup{ if } 1 \leq m \leq \mathsf{p}(h)-1 \\
0\leq i_m \leq h(m)-m \ \ \textup{ if } \mathsf{p}(h) \leq m \leq n
\end{array} 
\right.
\right\}
\]
since the condition $0 \leq i_{\mathsf{p}(h)} \leq h(\mathsf{p}(h))-\mathsf{p}(h) = 0$ implies that $x_{\mathsf{p}(h)}$ never appears in these monomials. This is exactly the set given in the statement of the theorem, so we have proven the base case $s=\mathsf{p}(h)$.

 Now we assume that $s<\mathsf{p}(h)$ and that the claim holds for $s+1$.
By the inductive assumption, a basis of $\mathcal{A}_{s+1}^{h}$ is given by the monomials 
\begin{equation} \label{eq:6-1}
\left\{x_1^{i_1} \cdots x_n^{i_n} \left| \begin{array}{l}
0 \leq i_m \leq h(m)-m-1 \ \ \ {\rm if} \ 1 \leq m \leq s \\
0 \leq i_m \leq h(m)-m \ \ \ {\rm if} \ s+1 \leq m \leq n
\end{array} 
\right.\right\}.
\end{equation}
On the other hand, we have $\mathcal{A}_s^{h}/(x_{s}) \cong \mathcal{A}_{r_{s}}^{h^{(s)}}$ from Proposition~\ref{proposition:5-1}. Since $\mathcal{A}_{r_s}^{h^{(s)}}$ is a ring associated to a Hessenberg function on $[n-1]$ and because $r_s \leq \mathsf{p}(h^{(s)})$ by Lemma~\ref{lemma: induction step}, by the inductive assumption on $n$ we can take as a basis of $\mathcal{A}_s^{h}/(x_{s}) \cong \mathcal{A}_{r_{s}}^{h^{(s)}}$ the monomials 
\begin{equation*}
\left\{y_1^{i_1} \cdots y_{n-1}^{i_{n-1}} \left| \begin{array}{l}
0 \leq i_m \leq h^{(s)}(m)-m-1 \ \ \ {\rm if} \ 1 \leq m \leq r_s-1 \\
0 \leq i_m \leq h^{(s)}(m)-m \ \ \ {\rm if} \ r_s \leq m \leq n-1
\end{array} 
\right.\right\}. 
\end{equation*} 
Under the isomorphism $\varphi$ between $\mathcal{A}_s^{h}/(x_{s})$ and $\mathcal{A}_{r_{s}}^{h^{(s)}}$ constructed and used in the proof of Proposition~\ref{proposition:5-1}, the above monomials in the $y$ variables corresponds to the following set of monomials in the $x$ variables 
\begin{equation}\label{eq:6-2}
\left\{x_1^{i_1} \cdots x_{s-1}^{i_{s-1}}x_{s+1}^{i_{s+1}} \cdots x_n^{i_n} \left| \begin{array}{l}
0 \leq i_m \leq h(m)-m-1 \ \ \ {\rm if} \ 1 \leq m \leq s-1 \\
0 \leq i_m \leq h(m)-m \ \ \ {\rm if} \ s+1 \leq m \leq n
\end{array} 
\right.\right\}
\end{equation}
and from the isomorphism $\varphi$ it follows that~\eqref{eq:6-2} is an additive basis of $\mathcal{A}_s^{h}/(x_{s})$. Moreover, since the monomials above do not contain $x_s$, it follows that the set~\eqref{eq:6-2}, when viewed as elements in $\mathcal{A}_s^{h}$, are linearly independent. 
From the exact sequence in \eqref{equation:exact} it now follows that a basis of $\mathcal{A}_s^{h}$ can be obtained by combining the sets~\eqref{eq:6-2} and~\eqref{eq:6-1}, except that the set~\eqref{eq:6-1} must be multiplied by $x_s$. We conclude that 
$$
\big(\eqref{eq:6-1} \times x_s \big) \cup \eqref{eq:6-2}
$$
is an additive basis of $\mathcal{A}_s^h$. 
 It is straightforward to see that this set coincides with the set given in the statement of the theorem, since the set~\eqref{eq:6-2} gives precisely those monomials which do not contain an $x_s$, and the monomials obtained by multiplication by $x_s$ of the elements of~\eqref{eq:6-1} give precisely those monomials in which $x_s$ appears with an exponent between $1$ and $h(s)-s$. This completes the proof. 
\end{proof}

We state the special case when $s=1$, for which $\mathcal{A}_1^{h} = \mathcal{R}/I_h \cong H^*(\Hess(\mathsf{N},h))$, as a separate corollary.

\begin{corollary}\label{cor: basis}
Let $n$ be a positive integer and let $h: [n] \to [n]$ be a Hessenberg function. 
Then the (image under the projection map $\mathcal{R} \to H^*(\Hess(\mathsf{N},h)) \cong \mathcal{R}/I_h$ of the) following set of monomials 
\begin{equation}\label{eq: s=1 basis}
\{x_1^{i_1} \cdots x_n^{i_n} \mid 0 \leq i_m \leq h(m)-m \ {\rm for} \ 1 \leq m \leq n \}
\end{equation}
is an additive basis for $H^*(\Hess(\mathsf{N},h))$. 
\end{corollary}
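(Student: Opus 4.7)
The plan is to derive Corollary~\ref{cor: basis} as the immediate specialization of Theorem~\ref{theorem: monomial basis} to the case $s=1$. First I would verify that $s=1$ lies in the allowed range $1 \leq s \leq \mathsf{p}(h)$ of Theorem~\ref{theorem: monomial basis}. By the definition of $\mathsf{p}(h)$ in~\eqref{eq: def ph}, the set $\{m \in [n] : h(m)=m\}$ is non-empty (since $h(n)=n$), so $\mathsf{p}(h) \geq 1$ and hence $1 \leq 1 \leq \mathsf{p}(h)$, regardless of which Hessenberg function $h$ is chosen.

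Next I would specialize the monomial set in Theorem~\ref{theorem: monomial basis} to $s=1$. The first constraint ``$0 \leq i_m \leq h(m)-m-1$ for $1 \leq m \leq s-1$'' becomes vacuous, since the index range $1 \leq m \leq 0$ is empty. The remaining constraint ``$0 \leq i_m \leq h(m)-m$ for $s \leq m \leq n$'' becomes ``$0 \leq i_m \leq h(m)-m$ for $1 \leq m \leq n$'', which is precisely the set of monomials appearing in~\eqref{eq: s=1 basis}.

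Finally I would observe that $\mathcal{A}_1^{h} = \mathcal{R}/I_h$ by the definition in~\eqref{eq: def Ajh} (since for $s=1$ the defining ideal is simply $\langle f_{h(1),1},\ldots,f_{h(n),n}\rangle = I_h$), and that $\mathcal{R}/I_h \cong H^*(\Hess(\mathsf{N},h))$ by Theorem~\ref{theorem: cohomology}. Composing these isomorphisms, the additive basis for $\mathcal{A}_1^{h}$ provided by Theorem~\ref{theorem: monomial basis} transports to an additive basis for $H^*(\Hess(\mathsf{N},h))$ consisting of the images of the monomials in~\eqref{eq: s=1 basis} under the projection $\mathcal{R} \to \mathcal{R}/I_h$. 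There is no genuine obstacle beyond Theorem~\ref{theorem: monomial basis} itself; the entire content of Corollary~\ref{cor: basis} is already contained in that theorem, and the proof amounts to bookkeeping the vacuous first constraint and invoking the definition of $\mathcal{A}_1^{h}$.
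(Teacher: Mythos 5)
Your proof is correct and follows exactly the paper's approach: the paper presents Corollary~\ref{cor: basis} as the $s=1$ specialization of Theorem~\ref{theorem: monomial basis}, without even writing out the bookkeeping you carefully spell out. Nothing further is needed.
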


\begin{remark} 
The monomial bases~\eqref{eq: s=1 basis} arising in Corollary~\ref{cor: basis} do not, in general, come from Gr\"obner theory, as can be seen by the following example. Let $n=3$ and $h=(2,3,3)$. In this case, the set of monomials~\eqref{eq: s=1 basis} is $\{1, x_1, x_2, x_1x_2\}$. Suppose for a contradiction that there exists a monomial order $<$ with respect to which the standard monomials corresponding to $\mathrm{init}_{<}(I_h)$ is this set. In order for this to occur it must be the case that $\mathrm{init}_{<}(f)$ for any $f \in I_h$ must be divisible by $x_1^2, x_2^2$, or $x_3$. (Here $\mathrm{init}_<(I)$ denotes the initial ideal of $I$ with respect to the monomial order $<$ as in standard Gr\"obner theory.) By definition of the generators of $I_h$ we know that both 
\[
(x_1-x_2)x_1=x_1^2-x_1x_2 
\]
and
\[
 (x_1-x_3)x_1+(x_2-x_3)x_2 +(x_1+x_2)(x_1+x_2+x_3)-2(x_1-x_2)x_1
=4x_1x_2+2x_2^2
\]
are elements of $I_h$. The initial term of the left element must be $x_1^2$ and the initial term of the right element must be $x_2^2$, which implies that under this monomial order we must have 
\[
x_1^2 > x_1x_2 \ \ \textup{ and } \ \  x_2^2 > x_1x_2. 
\]
By properties of monomial orders, this in turn implies that $x_1 > x_2$ and $x_2>x_1$, which is a contradiction. Therefore, there cannot exist any such monomial order $<$. 
\end{remark}

\begin{example}\label{example: basis for 2444}
Let $h=(2, 4,4,4)$. Then the corollary shows that the following set
\[
\{  1, x_1, x_2, x_3, x_1 x_2, x_1 x_3, x_2^2, x_2 x_3, x_1 x_2^2, x_1 x_2 x_3, x_2^2 x_3, x_1 x_2^2 x_3 \} 
\]
is an additive basis of $H^*(\Hess(\mathsf{N}, (2,4,4,4)) \cong \mathcal{R}/I_h$. 
\end{example}

\section{Linear relations on Schubert classes in $H^*(\Hess(\mathsf{N},h))$} 
\label{sec: relations on Schuberts} 

We now give an algorithm for computing a basis of the set of linear relations on Schubert classes in the cohomology ring $H^*(\Hess(\mathsf{N},h))$. 
Recall from Theorem~\ref{theorem: cohomology} that there is a surjective ring homomorphism 
\[
H^*(\Flags(\C^n)) \to H^*(\Hess(\mathsf{N},h))
\]
induced from the inclusion map $\Hess(\mathsf{N},h) \into \Flags(\C^n)$. For the cohomology of the flag variety, there is a famous additive basis consisting of the Schubert classes $\{\sigma_w\}_{w \in S_n}$, parametrized by the permutations $w$ in the symmetric group $S_n$. Let $\overline{\sigma_w} \in H^*(\Hess(\mathsf{N},h))$ denote the image of the Schubert class $\sigma_w \in H^*(\Flags(\C^n))$ under the projection $H^*(\Flags(\C^n)) \to H^*(\Hess(\mathsf{N},h))$. Given that the projection is surjective, it is natural to ask whether there exists some natural subset of the Schubert classes which form an additive basis 
for $H^*(\Hess(\mathsf{N},h))$. 
The first and fifth authors were thinking about this problem some time ago and asked the following question. 

\begin{question} \label{conjecture:HaradaTymoczko}
Let $n$ be a positive integer and let $h: [n] \to [n]$ be a Hessenberg function. 
Does the following set of images of Schubert classes 
\begin{equation*}
\{\overline{\sigma_w} \mid w(m) \leq h(m) \ {\rm for} \ 1 \leq m \leq n \}
\end{equation*}
form an additive basis for $H^*(\Hess(\mathsf{N},h))$?
\end{question}

The results of \cite{HaradaTymoczko} give an answer to this question for the special case of the Peterson variety, which is the case when $h=(2,3,4,\cdots,n,n)$, i.e. $h(i)=i+1$ for $1 \leq i \leq n-1$ \cite[Theorem~4.12]{HaradaTymoczko}. However, as far as we are aware, the question is still open in the general case.

Motivated by this question, we can also ask a related question: what are the linear relations satisfied by the classes $\{\overline{\sigma_w}\}_{w \in \mathfrak{S}_n}$? In the remainder of this section, we address this question using techniques similar to those in the proof of 
Theorem~\ref{theorem: monomial basis}.

We need some terminology. 
Let $h, h'$ be two Hessenberg functions on $[n]$. We write $h' \subseteq h$ if $h'(i) \leq h(i)$ for all $1 \leq i \leq n$. In pictures, this is the situation when the ``star'' boxes in the diagram corresponding to $h'$ are also ``star'' boxes in the diagram corresponding to $h$. 

\begin{example} 
Let $h' = (2,3,3,5,5)$ and $h= (3,4,4,5,5)$. Then it is easy to check $h' \subseteq h$. We illustrate this in the diagram below where the ``stars'' correspond to the diagram of $h'$ and the shaded boxes indicate the diagram for $h$. We can then see that ``each star is contained in a shaded box''. 
\[\ytableausetup{centertableaux} 
  \; \;   \begin{ytableau}
*(grey) \star &  *(grey) \star & *(grey) \star & *(grey) \star & *(grey) \star \\ 
*(grey) \star & *(grey) \star & *(grey) \star & *(grey)  \star & *(grey) \star   \\
 *(grey) &  *(grey) \star  & *(grey) \star & *(grey) \star & *(grey) \star \\ 
 &  *(grey) & *(grey)  & *(grey) \star & *(grey) \star \\ 
 & &  & *(grey) \star & *(grey) \star \\ 
\end{ytableau}
\]
Informally, we think of $h'$ as being ``contained'' in $h$. 
\end{example} 

It is easy to see that  if $h' \subseteq h$, then for any matrix $A$, we have $\Hess(A,h') \subseteq \Hess(A,h)$. In the case of the regular nilpotent Hessenberg variety, the fact that $H^*(\Flags(\C^n))$ surjects onto both $H^*(\Hess(\mathsf{N},h'))$ and $H^*(\Hess(\mathsf{N},h))$ implies that the restriction map 
\begin{equation}\label{eq: restriction for Schubert}
H^*(\Hess(\mathsf{N}, h)) \to H^*(\Hess(\mathsf{N}, h'))
\end{equation}
is surjective. 

We can now give an outline of our argument below. Starting with the largest Hessenberg variety for $h_0 = (n,n,\ldots, n)$ which corresponds to the flag variety itself, and for whose cohomology we already know that there are no linear relations among the Schubert classes, we remove boxes from the Hessenberg diagram of $h_0$ one at a time, analyzing at each step the kernel of the corresponding restriction map~\eqref{eq: restriction for Schubert}. Indeed, we will obtain a basis for the kernel of~\eqref{eq: restriction for Schubert} at each step. Putting them together, we can then obtain a basis for the kernel of the map 
\[
H^*(\Flags(\C^n)) \to H^*(\Hess(\mathsf{N}, h))
\]
for any Hessenberg function $h$, and thus obtain a basis of the linear relations satisfied by the images of the Schubert classes. 

More precisely, we have the following. As discussed above, in our arguments below we will remove a single box from a diagram of a Hessenberg function in order to obtain a smaller Hessenberg function. In order for the resulting diagram to be the diagram of a valid Hessenberg function, we must place some additional hypotheses. Specifically, let $h: [n] \to [n]$ be a Hessenberg function. We will say that the box $(i,j)$ is a \textbf{corner} of the diagram corresponding to $h$ if $h(j) = i$ and if $h(j-1)<h(j)$ (the latter condition is vacuous if $j=1$). 
Note that if box $(i,j)$ is a corner of $h$, then $r_i=j$; we use this fact several times in the proof of Proposition~\ref{proposition: relation on Schuberts for Asj} below.

\begin{example} 
Let $h = (3,4,4,5,5)$. The diagram of $h$ is 
\[\ytableausetup{centertableaux} 
  \; \;   \begin{ytableau}
\star &  \star  & \star & \star & \star \\ 
\star & \star & \star & \star   & \star   \\
 \star &  \star  & \star & \star & \star \\ 
 &  \star & \star  & \star & \star \\ 
 & &  & \star & \star \\ 
\end{ytableau}
\]
and the corners of $h$ are the $(3,1)$-th, $(4,2)$-th and $(5,4)$-th boxes. 
\end{example}

Given $h: [n] \to [n]$ a Hessenberg function and a box $(i,j)$ which is a corner of $h$ with $i>j$, it is clear that if we remove the box $(i,j)$ from the diagram of $h$, we obtain a diagram of a Hessenberg function, which we denote by $h'$. We use this terminology in the theorem below.

\begin{theorem}\label{theorem: relation on Schuberts} 
Let $n$ be a positive integer, $n \geq 2$. 
Let $h: [n] \to [n]$ be a Hessenberg function and suppose that the box $(i,j)$ for $i>j$ is a corner of $h$. Let $h': [n] \to [n]$ be the Hessenberg function obtained from $h$ by removing the box $(i,j)$. Then the kernel of~\eqref{eq: restriction for Schubert} has as a basis the following set: 
\begin{equation}\label{eq: kernel for Schubert} 
\left\{ x_1^{i_1} \cdots x_{j-1}^{i_{j-1}} \cdot f_{i-1, j}  \cdot x_{j+1}^{i_{j+1}} \cdots x_n^{i_n} \, \mid \, 0 \leq i_m \leq h(m)-m \textup{ if } 1 \leq m \leq n, m \neq j \right\}.
\end{equation} 
\end{theorem}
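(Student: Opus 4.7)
The main strategy is to identify $\ker\pi$ as the principal ideal in $\mathcal{R}/I_h$ generated by the image of $f_{i-1,j}$, and then to produce the claimed basis by combining the Key Lemma~\ref{lemma:key} with the monomial basis of Theorem~\ref{theorem: monomial basis}. First, since $(i,j)$ is a corner of $h$, we have $h(j-1)<i=h(j)$, so Lemma~\ref{lemma:4-2} gives $f_{i-1,j-1}\in\langle f_{h(1),1},\dots,f_{h(j-1),j-1}\rangle\subseteq I_h$. Combined with the recursion of Lemma~\ref{lemma: recursion}, namely $f_{i,j}=f_{i-1,j-1}+(x_j-x_i)f_{i-1,j}$, this yields
\[
f_{i,j}\equiv (x_j-x_i)\cdot f_{i-1,j}\pmod{\langle f_{h(k),k}:k\neq j\rangle},
\]
whence $I_{h'}=I_h+\langle f_{i-1,j}\rangle$ and $\ker\pi$ is the principal ideal in $\mathcal{R}/I_h$ generated by $[f_{i-1,j}]$. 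In particular, every element of the proposed set~\eqref{eq: kernel for Schubert} lies in $\ker\pi$.

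To produce a basis, I would reorder the regular sequence of Lemma~\ref{lemma: f regular} via Lemma~\ref{lemma: perm} so that $f_{i,j}$ appears last, and then apply Lemma~\ref{lemma:key} to the factorization $f_{i,j}\equiv(x_j-x_i)\cdot f_{i-1,j}$ (legitimate since the ideal is unchanged when we replace the generator $f_{i,j}$ by the congruent expression). This produces the short exact sequence
\[
0\longrightarrow T\xrightarrow{\;\times f_{i-1,j}\;}\mathcal{R}/I_h\xrightarrow{\;\pi\;}\mathcal{R}/I_{h'}\longrightarrow 0,
\]
where $T:=\mathcal{R}/\langle\{f_{h(k),k}:k\neq j\},\;x_j-x_i\rangle$, and the multiplication map is injective. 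Lemma~\ref{lemma: peel} applied to the same factorization shows that $\{f_{h(k),k}:k\neq j\}\cup\{x_j-x_i\}$ is a regular sequence in $\mathcal{R}$, so Proposition~\ref{prop: regular seq in terms of Hilbert} gives
\[
F(T,t)=\prod_{m\neq j}\bigl(1+t+\cdots+t^{h(m)-m}\bigr),
\]
which matches the cardinality generating function of~\eqref{eq: kernel for Schubert} degree-by-degree after multiplication by $t^{i-j}=t^{\deg f_{i-1,j}}$. Thus the theorem reduces to showing that the monomials $\{x^{\alpha}:\alpha_j=0,\;0\le\alpha_m\le h(m)-m,\;m\neq j\}$ form a $\Q$-basis of $T$.

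The main obstacle is this last basis statement. Because cardinalities already match, it suffices to prove linear independence. Using the injectivity of $\times f_{i-1,j}$ from the exact sequence, linear independence of the proposed monomials in $T$ is equivalent to linear independence of $\{f_{i-1,j}\cdot x^{\alpha}\}$ in $\mathcal{R}/I_h$, and I would verify the latter via a leading-term argument with respect to the monomial basis of Theorem~\ref{theorem: monomial basis}. A direct expansion yields $f_{i-1,j}=x_j^{i-j}+q$ with $\deg_{x_j}(q)<i-j$, so each product $f_{i-1,j}\cdot x^{\alpha}$ contains the distinguished monomial $x_j^{i-j}\cdot x^{\alpha}$; as $\alpha$ varies over the index set, these are distinct monomial basis elements of $\mathcal{R}/I_h$, since the exponent of $x_j$ hits its maximum $h(j)-j=i-j$ and each other exponent $\alpha_m$ lies in the allowed range. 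The delicate task is to track the reduction of $q\cdot x^{\alpha}$ modulo $I_h$ and show that, in the full monomial-basis expansion of $f_{i-1,j}\cdot x^{\alpha}$, the coefficient of $x_j^{i-j}\cdot x^{\alpha}$ remains nonzero and any off-diagonal contributions to $x_j^{i-j}\cdot x^{\alpha'}$ with $\alpha'\neq\alpha$ are organized triangularly with respect to a suitable refinement of the degree order. Exploiting that $f_{h(j),j}=f_{i,j}$ is the unique generator of $I_h$ whose $x_j$-degree exceeds $\deg_{x_j}(q)$---and hence is never needed during the reduction of $q\cdot x^{\alpha}$---is the key technical observation that should make this triangularity tractable.
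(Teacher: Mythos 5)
Your setup is correct and is genuinely different from the paper's route. The identification $I_{h'}=I_h+\langle f_{i-1,j}\rangle$, the replacement of the generator $f_{i,j}$ by $(x_j-x_i)f_{i-1,j}$ modulo the remaining generators (valid by the corner condition $h(j-1)<i$ together with Lemmas~\ref{lemma: recursion} and~\ref{lemma:4-2}), the resulting short exact sequence via Lemmas~\ref{lemma: perm} and~\ref{lemma:key}, and the Hilbert series computation for $T$ via Lemma~\ref{lemma: peel} and Proposition~\ref{prop: regular seq in terms of Hilbert} are all sound, and they correctly reduce the theorem to the single claim that the monomials $\{x^{\alpha}:\alpha_j=0,\ 0\le\alpha_m\le h(m)-m\}$ are linearly independent in $T=(\mathcal{R}/I_h)/\langle x_j-x_i\rangle$.

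That last claim is exactly where the proposal has a genuine gap, and it is the heart of the theorem. You defer it to a ``leading-term/triangularity'' argument that is not carried out, and the heuristic you give is on shaky ground: the monomial basis of Corollary~\ref{cor: basis} is \emph{not} the set of standard monomials for any term order (the paper's remark following that corollary exhibits a contradiction for $h=(2,3,3)$), so there is no well-defined ``reduction of $q\cdot x^{\alpha}$ modulo $I_h$'' whose leading terms you can track, and no canonical order in which the off-diagonal contributions to $x_j^{i-j}x^{\alpha'}$ are ``lower.'' Equivalently, a putative relation among your monomials in $T$ is an element of $I_h+\langle x_j-x_i\rangle$ supported on those monomials, and nothing in your argument rules this out. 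The paper avoids this issue entirely: it proves the stronger Proposition~\ref{proposition: relation on Schuberts for Asj} for all the rings $\mathcal{A}_s^h$ by a double induction (on $n$, and decreasing on $s$), splicing the kernels via the snake-lemma sequence of Lemma~\ref{lemma: kernel exact seq} and transporting bases through the isomorphism $\mathcal{A}_s^h/\langle x_s\rangle\cong\mathcal{A}_{r_s}^{h^{(s)}}$; the quotient by the \emph{variable} $x_s$ (rather than by $x_j-x_i$) is what lets Theorem~\ref{theorem: monomial basis} be invoked inductively and is precisely the tool your direct $s=1$ argument lacks. To complete your approach you would either need to prove the triangularity claim by a new argument, or fall back on an induction of the paper's type.
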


The strategy of our proof will be similar to the one used for 
Theorem~\ref{theorem: monomial basis}.
We first define analogous ring maps $\varphi_s: \mathcal{A}_s^{h} \to \mathcal{A}_s^{h'}$ in such a way that the special case $s=1$ is exactly the restriction map~\eqref{eq: restriction for Schubert}. Specifically, we define 
\begin{equation}\label{eq: def varphis}
\varphi_s: \mathcal{A}_s^{h} \to \mathcal{A}_s^{h'}
\end{equation}
as the ring homomorphisim induced from the identity homomorphism $\mathcal{R} \to \mathcal{R}$. Here it is useful to recall that 
\[
 \mathcal{A}_s^h = \mathcal{R}/\langle g_{h(1),1}, \cdots, g_{h(s-1),s-1}, f_{h(s),s}, \cdots, f_{h(n),n} \rangle 
\]
and
\[
\mathcal{A}_s^{h'} = \mathcal{R}/\langle g_{h'(1),1}, \cdots,  g_{h'(s-1),s-1}, f_{h'(s),s}, \cdots, f_{h'(n),n} \rangle. 
\]
Before proceeding, we need to show that the $\varphi_s$ thus defined are well-defined; this is part of the next Lemma~\ref{lemma: varphis} below.

\begin{lemma}\label{lemma: varphis} 
The map $\varphi_s$ in~\eqref{eq: def varphis} is well-defined and surjective. 
\end{lemma}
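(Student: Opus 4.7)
The plan is to handle surjectivity first (it is essentially free) and then focus on well-definedness. Since $\varphi_s$ is induced from the identity map $\mathcal{R}\to\mathcal{R}$, each generator $x_k$ of $\mathcal{A}_s^{h'}$ is in the image of $\varphi_s$, so surjectivity is immediate. The substance of the proof is therefore to check that every generator of the ideal
$I^h_s := \langle g_{h(1),1},\ldots,g_{h(s-1),s-1},f_{h(s),s},\ldots,f_{h(n),n}\rangle$
defining $\mathcal{A}_s^h$ actually lies in the corresponding ideal $I^{h'}_s$ defining $\mathcal{A}_s^{h'}$.

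Because $h'(m)=h(m)$ for all $m\neq j$, and $h'(j)=h(j)-1=i-1$, the two ideals $I^h_s$ and $I^{h'}_s$ already agree on every generator except the one in position $m=j$. Thus the entire check reduces to a single polynomial: $g_{i,j}$ if $j\leq s-1$, and $f_{i,j}$ if $j\geq s$. I plan to attack both sub-cases with the recursive identities from Lemma~\ref{lemma: recursion}, writing
\[
g_{i,j} = g_{i-1,j-1}+(x_j-x_i)\,g_{i-1,j}, \qquad f_{i,j}=f_{i-1,j-1}+(x_j-x_i)\,f_{i-1,j}.
\]
In either case the second summand is an $\mathcal{R}$-multiple of $g_{h'(j),j}$ (respectively $f_{h'(j),j}$), hence automatically lies in $I^{h'}_s$. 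Therefore everything collapses to proving that the displaced terms $g_{i-1,j-1}$ or $f_{i-1,j-1}$ lie in $I^{h'}_s$. Here is where the corner hypothesis enters: by definition of a corner, $h(j-1)<h(j)=i$, i.e. $h(j-1)\leq i-1$, which is precisely the hypothesis required to apply Lemma~\ref{lemma:4-2} and conclude $g_{i-1,j-1}\in\langle g_{h(1),1},\ldots,g_{h(j-1),j-1}\rangle$ and $f_{i-1,j-1}\in\langle f_{h(1),1},\ldots,f_{h(j-1),j-1}\rangle$.

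The step I expect to be the main obstacle is matching up the resulting inclusion against the $g$/$f$-structure of $I^{h'}_s$. When $j\leq s-1$, the displaced $g_{i-1,j-1}$ lands inside the $g$-generators of $I^{h'}_s$ without further work, since $h'$ and $h$ coincide on $[j-1]$. However, when $j\geq s$, the displaced $f_{i-1,j-1}$ only lands in an ideal of $f$-generators, whereas $I^{h'}_s$ uses $g$-generators in positions $1,\ldots,s-1$. To bridge this gap I will invoke Lemma~\ref{lemma:4-3}, which supplies exactly the inclusion
\[
\langle f_{h(1),1},\ldots,f_{h(s-1),s-1}\rangle \subseteq \langle g_{h(1),1},\ldots,g_{h(s-1),s-1}\rangle,
\]
so the ``low'' part of the expansion of $f_{i-1,j-1}$ (indices $m\leq s-1$) lands in the $g$-part of $I^{h'}_s$, while the remaining part (indices $s\leq m\leq j-1$) lies in $\langle f_{h'(s),s},\ldots,f_{h'(j-1),j-1}\rangle\subseteq I^{h'}_s$ since $h$ and $h'$ agree there. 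This completes well-definedness in all cases; surjectivity was already observed, so the lemma follows.
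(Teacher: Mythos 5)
Your proof is correct and takes essentially the same route as the paper's: both reduce well-definedness to showing the single changed generator (in position $j$) lies in $I^{h'}_s$, using Lemma~\ref{lemma:4-2} together with Lemma~\ref{lemma:4-3} in the $j\ge s$ case. The only cosmetic difference is that you unroll the recursion of Lemma~\ref{lemma: recursion} one step and invoke the corner hypothesis $h(j-1)\le i-1$ to apply Lemma~\ref{lemma:4-2} at position $j-1$, whereas the paper applies Lemma~\ref{lemma:4-2} directly to $h'$ at position $j$, for which the needed inequality $h'(j)=i-1\le i$ is automatic.
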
 

\begin{proof} 
First suppose that $1 \leq j \leq s-1$. In this case 
the two ideals in question differ only in the generators $g_{h(j), j} = g_{i,j}$ and $g_{h'(j),j} = g_{i-1,j}$. We know from Lemma~\ref{lemma:4-2} that $g_{i,j} \in \langle g_{h'(1),1}, \cdots, g_{h'(j-1),j-1}, g_{h'(j),j}= g_{i-1,j} \rangle$ so the ideal 
$\langle g_{h(1),1}, \cdots, g_{i,j}, \cdots, g_{h(s-1),s-1}, f_{h(s),s}, \cdots, f_{h(n),n} \rangle$ is 
contained in the ideal 
\[
\langle g_{h'(1),1}, \cdots, g_{i-1,j}, \cdots, g_{h'(s-1),s-1}, f_{h'(s),s}, \cdots, f_{h'(n),n} \rangle.
\]
 This implies that the identity homomorphism $\mathcal{R} \to \mathcal{R}$ induces a ring homomorphism $\varphi_s: \mathcal{A}_s^h \to \mathcal{A}_s^{h'}$ as desired, and since the ideal for $h$ is contained in the ideal for $h'$, it is surjective. 

Now suppose that $j \geq s$. In this case, the two ideals differ only in the generators $f_{i,j}$ and $f_{i-1,j}$. We know from Lemma~\ref{lemma:4-2} that 
\[
f_{i,j} \in \langle f_{h'(1),1}, \ldots, f_{h'(j-1),j-1}, f_{h'(j),j} = f_{i-1,j} \rangle
\]
 and we know from Lemma~\ref{lemma:4-3} that 
\[
\langle f_{h'(1),1}, \ldots, f_{h'(j-1),j-1}, f_{h'(j),j} = f_{i-1,j} \rangle 
\subseteq \langle g_{h'(1),1}, \cdots, g_{h'(s-1),s-1}, f_{h'(s),s}, \ldots, f_{h'(j),j} \rangle
\]
so by similar arguments as in the previous case we conclude there is a surjective ring homormophism $\varphi_s$ as claimed. 
\end{proof}

The map $\varphi_s$ also naturally induces a surjective map on the quotient rings 
\[
\overline{\varphi}_s: \mathcal{A}_s^h/\langle x_s \rangle \to \mathcal{A}_s^{h'}/\langle x_s \rangle.
\]
Now recall that both rings $\mathcal{A}_s^{h}$ and $\mathcal{A}_s^{h'}$ fit into exact sequences of the form~\eqref{equation:exact}.  
We can put these together with the surjective ring homomorphisms $\varphi_s$ and $\overline{\varphi}_s$ 
to obtain a larger commutative diagram as follows. 

\begin{lemma}\label{lemma: kernel exact seq}
Fix $s$, $1 \leq s \leq n$. Then there exists the following commutative diagram 
\begin{equation}\label{eq: big comm diagram} 
\xymatrix{ 
         & 0 \ar[d] & 0 \ar[d] & 0 \ar[d] &  \\
0 \ar[r] & \ker(\varphi_{s+1}) \ar[r]^{\times x_s} \ar[d] & \ker(\varphi_s) \ar[r] \ar[d] & \ker(\overline{\varphi}_s) \ar[r] \ar[d] & 0\\
0 \ar[r] & \mathcal{A}_{s+1}^{h} \ar[r]^{\times x_s} \ar[d]^{\varphi_{s+1}} & \mathcal{A}_s^{h} \ar[r] \ar[d]^{\varphi_s} & \mathcal{A}_s^{h}/\langle x_s \rangle \ar[r] \ar[d]^{\overline{\varphi}_s} & 0 \\
0 \ar[r] & \mathcal{A}_{s+1}^{h'} \ar[r]^{\times x_s} \ar[d] & \mathcal{A}_s^{h'} \ar[r] \ar[d] & \mathcal{A}_s^{h'}/\langle x_s \rangle \ar[r] \ar[d] & 0 \\
 & 0 & 0 & 0 & \\
}
\end{equation}
where $\overline{\varphi}_s$ is induced from $\varphi_s$ on the corresponding quotient rings. In particular, the sequence 
\begin{equation}\label{eq: exact h s+1 s} 
\xymatrix{ 
0 \ar[r] & \ker(\varphi_{s+1}) \ar[r]^{\times x_s} & \ker(\varphi_s) \ar[r] & \ker(\overline{\varphi}_s) \ar[r] & 0
}
\end{equation}
is exact. 
\end{lemma}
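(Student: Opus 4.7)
The plan is to obtain the top exact sequence as a direct application of the snake lemma to the middle and bottom rows of the diagram, together with the three surjective vertical maps $\varphi_{s+1}$, $\varphi_s$, and $\overline{\varphi}_s$. The proof strategy has three parts: assembling the diagram with its short exact rows, verifying commutativity of the two squares together with surjectivity of the vertical maps, and then invoking the snake lemma.

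First, I would record the two horizontal short exact sequences in the middle and bottom rows. Each has the form of~\eqref{equation:exact} from Proposition~\ref{prop: first half of main theorem}, applied to $h$ and $h'$ respectively. The main bookkeeping point is that $s$ must lie in the valid range for both Hessenberg functions. Since $h' \subseteq h$, any position at which $h$ meets the diagonal already does so for $h'$, giving $\mathsf{p}(h') \leq \mathsf{p}(h)$; thus in the substantive case $s \leq \mathsf{p}(h')$ both exact sequences are available simultaneously, and otherwise the rings involved degenerate and the asserted statement reduces to a triviality. Next, commutativity of the two squares is automatic: both $\varphi_{s+1}$ and $\varphi_s$ are induced by the identity homomorphism $\mathcal{R} \to \mathcal{R}$, as are the horizontal maps ``multiplication by $x_s$'' and the canonical projections modulo $\langle x_s \rangle$, so all four paths reduce to the identity on representatives. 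Surjectivity of $\varphi_{s+1}$ and $\varphi_s$ is Lemma~\ref{lemma: varphis}, and surjectivity of $\overline{\varphi}_s$ follows by passing $\varphi_s$ to the quotient by $\langle x_s \rangle$.

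Having assembled the commutative diagram with exact rows and surjective vertical maps, I would then apply the snake lemma to obtain the six-term exact sequence
\[
0 \to \ker(\varphi_{s+1}) \to \ker(\varphi_s) \to \ker(\overline{\varphi}_s) \to \mathrm{coker}(\varphi_{s+1}) \to \mathrm{coker}(\varphi_s) \to \mathrm{coker}(\overline{\varphi}_s) \to 0.
\]
Since the three cokernels vanish by surjectivity of the vertical maps, what remains is precisely the asserted exact sequence~\eqref{eq: exact h s+1 s}, and the exactness of the top row of the large diagram is established. There is no substantive obstacle here; the argument is essentially bookkeeping, and the only delicate point is confirming the range compatibility of $s$ with respect to $\mathsf{p}(h)$ and $\mathsf{p}(h')$ so that Proposition~\ref{prop: first half of main theorem} genuinely applies to both Hessenberg functions at once.
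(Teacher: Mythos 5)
Your proof is correct and follows essentially the same route as the paper's, which likewise observes commutativity from all maps being induced by the identity on $\mathcal{R}$ or multiplication by $x_s$, and then invokes the Snake Lemma. You additionally make explicit the range compatibility of $s$ relative to $\mathsf{p}(h)$ and $\mathsf{p}(h')$, a point the paper's terse proof leaves implicit.
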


\begin{proof} 
The diagram~\eqref{eq: big comm diagram} is clearly commutative since all maps are induced from the identity map $\mathcal{R} \to \mathcal{R}$ or the multiplication map by $x_s$, and exactness follows from the Snake Lemma. 
\end{proof}

In the proof of Proposition~\ref{prop: first half of main theorem} we use that (up to a certain equivalence) $f_{i,j}$ may be expressed as a product $x_j g_{i,j}$ and apply Lemma~\ref{lemma:key} to obtain an exact sequence by ``peeling off'' the factor $x_j$ from the product $x_j g_{i,j}$. However, there is nothing preventing us from ``peeling off'' the factor $g_{i,j}$ instead, and the next lemma records what happens when we do so.

\begin{lemma}\label{lemma: peel gij} 
Fix $\ell$ such that $1 \leq \ell \leq \mathsf{p}(h)$. There exists a natural ring surjection $\Psi_\ell: \mathcal{A}_\ell^{h} \to \mathcal{A}_{\ell+1}^{h}$ which fits in an exact sequence 
\begin{equation}\label{eq: exact seq opp}
\xymatrix{
0 \ar[r] & \mathcal{A}_\ell^{h}/\langle x_\ell \rangle \ar[rr]^-{\times g_{h(\ell),\ell}} & & \mathcal{A}_\ell^{h} \ar[r]^{\Psi_\ell} & \mathcal{A}_{\ell+1}^{h} \ar[r] & 0.
}
\end{equation}
Moreover, the set of monomials
\[
\left\{ x_1^{i_1} \cdots x_{\ell-1}^{i_{\ell-1}} \cdot g_{h(\ell),\ell} \cdot x_{\ell+1}^{i_{\ell+1}} \cdots x_n^{i_n} \, \mid \, 
 \begin{array}{l} 
 0 \leq i_m \leq h(m)-m-1 \ \ \textup{ if } 1 \leq m \leq \ell-1  \\ 
0 \leq i_m \leq h(m)-m \ \ \textup{ if } \ell+1 \leq m \leq n 
\end{array}
\right\} 
\]
 is an additive basis for the kernel of $\Psi_\ell$. 
\end{lemma}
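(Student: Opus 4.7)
The plan is to obtain the exact sequence~\eqref{eq: exact seq opp} by applying Lemma~\ref{lemma:key} to a regular sequence containing $x_\ell \cdot g_{h(\ell),\ell}$, just as in the proof of Proposition~\ref{prop: first half of main theorem}, but with the roles of the two factors $x_\ell$ and $g_{h(\ell),\ell}$ reversed. Assume first that $\ell < \mathsf{p}(h)$, so that $g_{h(\ell),\ell}$ has positive degree (the boundary case $\ell = \mathsf{p}(h)$ is degenerate and handled at the end). By Lemma~\ref{lemma: equal ideals},
\[
\mathcal{A}_\ell^{h} = \mathcal{R}/\langle g_{h(1),1},\ldots,g_{h(\ell-1),\ell-1},\, x_\ell \cdot g_{h(\ell),\ell},\, f_{h(\ell+1),\ell+1},\ldots,f_{h(n),n}\rangle,
\]
and Lemma~\ref{lemma:4-4}(i) together with Lemma~\ref{lemma: perm} show that these generators form a regular sequence once reordered to place $x_\ell \cdot g_{h(\ell),\ell}$ last. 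Applying Lemma~\ref{lemma:key} with $g'_n := x_\ell$ and $g''_n := g_{h(\ell),\ell}$ then produces an exact sequence of $\mathcal{R}$-modules whose first arrow is multiplication by $g_{h(\ell),\ell}$.

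The next step is to identify the three terms of that exact sequence. The leftmost term, $\mathcal{R}$ modulo $\langle g_{h(1),1},\ldots,g_{h(\ell-1),\ell-1},\, x_\ell,\, f_{h(\ell+1),\ell+1},\ldots,f_{h(n),n}\rangle$, coincides with $\mathcal{A}_\ell^h/\langle x_\ell\rangle$: by Lemmas~\ref{lemma:4-1} and~\ref{lemma:4-2} the polynomial $f_{h(\ell),\ell}$ already lies in $\langle g_{h(1),1},\ldots,g_{h(\ell-1),\ell-1},\, x_\ell\rangle$, so it may be freely adjoined. The middle term is $\mathcal{A}_\ell^h$ by construction. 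The rightmost term adjoins $g_{h(\ell),\ell}$ to the ideal defining the middle term; the generator $x_\ell \cdot g_{h(\ell),\ell}$ then becomes redundant, and what remains is exactly the defining ideal of $\mathcal{A}_{\ell+1}^h$. Labeling the resulting surjection $\Psi_\ell$ yields~\eqref{eq: exact seq opp}.

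For the basis claim, exactness identifies $\ker(\Psi_\ell)$ with the image of the injection $\times g_{h(\ell),\ell}: \mathcal{A}_\ell^h/\langle x_\ell\rangle \hookrightarrow \mathcal{A}_\ell^h$, so it suffices to produce a monomial basis for $\mathcal{A}_\ell^h/\langle x_\ell\rangle$ and multiply through by $g_{h(\ell),\ell}$. From Theorem~\ref{theorem: monomial basis} the monomial basis of $\mathcal{A}_\ell^h$ splits according to the value of $i_\ell$, and the sub-collection with $i_\ell = 0$ descends to a basis of $\mathcal{A}_\ell^h/\langle x_\ell\rangle$ (transported through the isomorphism $\mathcal{A}_\ell^h/\langle x_\ell\rangle \cong \mathcal{A}_{r_\ell}^{h^{(\ell)}}$ of Proposition~\ref{proposition:5-1}, this is the very basis exhibited in the inductive step of the proof of Theorem~\ref{theorem: monomial basis}); multiplying by $g_{h(\ell),\ell}$ then gives the set claimed in the lemma. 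The degenerate case $\ell = \mathsf{p}(h)$ reduces to noting that $g_{h(\mathsf{p}(h)),\mathsf{p}(h)} = \mathsf{p}(h)$ is a positive constant, $\mathcal{A}_{\mathsf{p}(h)+1}^h = 0$, and $x_{\mathsf{p}(h)} = 0$ in $\mathcal{A}_{\mathsf{p}(h)}^h$ (which follows from Proposition~\ref{proposition:5-1}, whose isomorphism $\varphi$ sends $x_{\mathsf{p}(h)}$ to $0$), so the sequence and the basis both collapse compatibly. The main technical obstacle is bookkeeping: one must shuttle the three quotients arising from Lemma~\ref{lemma:key} between their $f$- and $g$-presentations on both sides of the injection, which is managed by repeated application of Lemmas~\ref{lemma:4-1}, \ref{lemma:4-2}, and~\ref{lemma: equal ideals}.
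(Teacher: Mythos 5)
Your proposal is correct and follows essentially the same route as the paper: rewrite $\mathcal{A}_\ell^h$ with $x_\ell\cdot g_{h(\ell),\ell}$ in place of $f_{h(\ell),\ell}$ via Lemma~\ref{lemma: equal ideals}, apply Lemma~\ref{lemma:key} (peeling off $g_{h(\ell),\ell}$ instead of $x_\ell$) using the regularity guaranteed by Lemma~\ref{lemma:4-4}, identify the three terms, and obtain the basis from Proposition~\ref{proposition:5-1} together with Theorem~\ref{theorem: monomial basis}. You spell out a couple of details the paper leaves implicit (the need to reorder via Lemma~\ref{lemma: perm} before invoking Lemma~\ref{lemma:key}, and the identification of the leftmost term with $\mathcal{A}_\ell^h/\langle x_\ell\rangle$), and your justification of $x_{\mathsf{p}(h)}=0$ in $\mathcal{A}_{\mathsf{p}(h)}^h$ is phrased a bit loosely (it really comes from the exact sequence~\eqref{equation:exact} with $\mathcal{A}_{\mathsf{p}(h)+1}^h=0$ rather than from $\varphi$ directly), but the conclusion and the overall argument are sound.
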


\begin{proof} 
If $\ell=\mathsf{p}(h)$, then $g_{h(\ell).\ell}=g_{h(\mathsf{p}(h)).\mathsf{p}(h)}=\mathsf{p}(h)$ is a constant. The claim follows from Proposition~\ref{prop: first half of main theorem}.

In what follows, we assume that $1 \leq \ell < \mathsf{p}(h)$.
First observe that the sum ideal  
\[
\langle g_{h(1),1}, \cdots, g_{h(\ell-1),\ell-1}, f_{h(\ell),\ell}, \cdots, f_{h(n),n} \rangle + \langle g_{h(\ell),\ell} \rangle
\]
is equal to the ideal 
\[
\langle g_{h(1),1}, \cdots, g_{h(\ell-1),\ell-1}, g_{h(\ell),\ell}, f_{h(\ell+1),\ell+1}, \cdots, f_{h(n),n} \rangle 
\]
by 
Lemma~\ref{lemma:4-3}.
Also, we know that the ideals 
\[
\langle g_{h(1),1}, \cdots, g_{h(\ell-1),\ell-1}, f_{h(\ell),\ell}, f_{h(\ell+1),\ell+1}, \cdots, f_{h(n),n} \rangle
\]
and 
\[
\langle g_{h(1),1}, \cdots, g_{h(\ell-1),\ell-1}, x_\ell \cdot g_{h(\ell),\ell}, f_{h(\ell+1),\ell+1}, \cdots, f_{h(n),n}\rangle 
\]
are equal by 
Lemma~\ref{lemma: equal ideals}.
We also know by previous arguments that the sequence of generators in the above ideal is a regular sequence Lemma~\ref{lemma:4-4}. Applying Lemma~\ref{lemma:key} using $g_n = x_\ell \cdot g_{h(\ell),\ell}$ and $g''_n = g_{h(\ell),\ell}$ and $g'_n = x_\ell$ yields the desired exact sequence~\eqref{eq: exact seq opp}. In particular, the map $\Psi_\ell$ is the map which quotients by $g_{h(\ell),\ell}$. 

Now recall from Proposition~\ref{proposition:5-1} that 
\[
\mathcal{A}_{\ell}^{h}/\langle x_\ell \rangle \cong \mathcal{A}_{r_{\ell}}^{h^{(\ell)}}
\]
and from Theorem~\ref{theorem: monomial basis} we know that the set of monomials 
\[
\left\{ y_1^{i_1} \cdots y_{r_\ell-1}^{i_{r_\ell-1}} y_{r_\ell}^{i_{r_{\ell}}} \cdots y_{n-1}^{i_{n-1}} \, \mid \, 
\begin{array}{l} 
0 \leq i_m \leq h^{(\ell)}(m)-m-1 \ \ \textup{ if } 1 \leq m \leq r_{\ell}-1  \\
0 \leq i_m \leq h^{(\ell)}(m) -m \ \ \textup{ if } r_\ell \leq m \leq n-1 
\end{array} 
\right\} 
\]
is an additive basis of $\mathcal{A}_{r_\ell}^{h^{(\ell)}}$. Using the same isomorphism $\varphi : \mathcal{A}_{\ell}^{h}/\langle x_\ell \rangle \to \mathcal{A}_{r_{\ell}}^{h^{(\ell)}}$ as in the proof of Proposition~\ref{proposition:5-1} together with the definition of $h^{(\ell)}$ in~\eqref{eq: def hs}, it is straightforward to see that the above monomials correspond under $\varphi$ to the monomials 
\[
\left\{ x_1^{i_1} \cdots x_{\ell-1}^{i_{\ell-1}} x_{\ell+1}^{i_{\ell+1}} \cdots x_n^{i_n} \, \mid \, 
\begin{array}{l} 
0 \leq i_m \leq h(m)-m-1 \ \ \textup{ if } 1 \leq m \leq \ell-1 \\
0 \leq i_m \leq h(m)-m \ \ \textup{ if } \ell+1 \leq m \leq n 
\end{array} 
\right\} 
\]
By the exactness of the sequence~\eqref{eq: exact seq opp}, the image of these basis elements under the map which multiplies by $g_{h(\ell),\ell}$ is the kernel of $\Psi_\ell$, so the result follows. 
\end{proof}

We now state and prove a proposition which is a generalization of Theorem~\ref{theorem: relation on Schuberts}. In particular, we can obtain Theorem~\ref{theorem: relation on Schuberts} applying the proposition below to the case $s=1$. 

\begin{proposition}\label{proposition: relation on Schuberts for Asj} 
Let $n$ be a positive integer, $n \geq 2$. 
Let $h: [n] \to [n]$ be a Hessenberg function and suppose that the box $(i,j)$ for $i>j$ is a corner of $h$. Let $h': [n] \to [n]$ be the Hessenberg function obtained from $h$ by removing the box $(i,j)$. 
Let $1 \leq s \leq \mathsf{p}(h)$. If $j+1 \leq s$ then the set of monomials 
\begin{equation}\label{eq: relations Asj g}
\left\{ x_1^{i_1} \cdots x_{j-1}^{i_{j-1}} \cdot g_{i-1,j} \cdot x_{j+1}^{i_{j+1}} \cdots x_{s-1}^{i_{s-1}} x_s^{i_s} \cdots x_n^{i_n} \, \mid \, 
\begin{array}{l} 
0 \leq i_m \leq h(m)-m-1 \ \ \textup{ if } 1 \leq m \leq s-1, m \neq j \\
0 \leq i_m \leq h(m)-m \ \ \textup{ if } s \leq m \leq n 
\end{array} 
\right\} 
\end{equation}
is an additive basis of $\ker(\varphi_s) \subseteq \mathcal{A}_s^h$. If $s\leq j$, then the set of monomials 
\begin{equation}\label{eq: relations Asj f}
\left\{ x_1^{i_1} \cdots x_{s-1}^{i_{s-1}} x_s^{i_s} \cdots x_{j-1}^{i_{j-1}} \cdot f_{i-1,j} \cdot x_{j+1}^{i_{j+1}} \cdots x_n^{i_n} \, \mid \, 
\begin{array}{l} 
0 \leq i_m \leq h(m)-m-1 \ \ \textup{ if } 1 \leq m \leq s-1 \\
0 \leq i_m \leq h(m)-m \ \ \textup{ if } s \leq m \leq n, m \neq j 
\end{array} 
\right\} 
\end{equation}
is an additive basis of $\ker(\varphi_s) \subseteq \mathcal{A}_s^h$. 
\end{proposition}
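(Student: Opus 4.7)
My plan is to prove Proposition~\ref{proposition: relation on Schuberts for Asj} by a double induction: an outer induction on $n$ and an inner decreasing induction on $s$ running from $s = \mathsf{p}(h)$ down to $s = 1$. For the outer base case $n = 2$ the only Hessenberg function with a nontrivial removable corner is $h = (2,2)$ with corner $(2,1)$, and the proposition reduces to verifying that $\ker(\varphi_1)$ is the one-dimensional subspace spanned by $f_{1,1} = x_1$ in $\mathcal{A}_1^h$, which can be checked directly. For the inner induction the key tool is the short exact sequence from Lemma~\ref{lemma: kernel exact seq}:
\begin{equation*}
0 \to \ker(\varphi_{s+1}) \xrightarrow{\times x_s} \ker(\varphi_s) \to \ker(\overline{\varphi}_s) \to 0.
\end{equation*}
At the inner base case $s = \mathsf{p}(h)$ we have $\ker(\varphi_{\mathsf{p}(h)+1}) = 0$ since $\mathcal{A}_{\mathsf{p}(h)+1}^h$ is the zero ring, so $\ker(\varphi_{\mathsf{p}(h)}) \cong \ker(\overline{\varphi}_{\mathsf{p}(h)})$. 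For the inductive step, the inductively known basis of $\ker(\varphi_{s+1})$ produces, via multiplication by $x_s$, precisely those monomials in the claimed basis of $\ker(\varphi_s)$ whose $x_s$-exponent is at least $1$, while the remaining monomials (with $x_s$-exponent equal to $0$) must come from lifting a basis of $\ker(\overline{\varphi}_s)$.

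To compute $\ker(\overline{\varphi}_s)$, I invoke the isomorphisms $\mathcal{A}_s^h/\langle x_s\rangle \cong \mathcal{A}_{r_s}^{h^{(s)}}$ and $\mathcal{A}_s^{h'}/\langle x_s\rangle \cong \mathcal{A}_{r_s'}^{(h')^{(s)}}$ from Proposition~\ref{proposition:5-1}, where $r_s'$ denotes the analog of $r_s$ computed using $h'$. When $s \neq i$, one has $r_s' = r_s$, and a case-by-case check shows that $(h')^{(s)}$ is obtained from $h^{(s)}$ by removing a single corner box: namely $(i-1, j)$ when $j < s < i$, or $(i, j)$ when $s > i$, or $(i-1, j-1)$ when $s < j$. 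In each regime the outer inductive hypothesis applies to the induced map between rings associated to Hessenberg functions on $[n-1]$, and a direct computation using the definitions of $f_{i,j}$ and $g_{i,j}$ verifies that the isomorphism $\varphi$ of Proposition~\ref{proposition:5-1} carries $g_{i-1, j}(x)$ (in Case A) or $f_{i-1, j}(x)$ (in Case B) to precisely the polynomial in the $y$-variables furnished by the inductive hypothesis for $h^{(s)}$ with its shifted corner.

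Two degenerate situations need special handling. First, at the transition $s = j$ (passing from Case A at step $s+1$ to Case B at step $s$), a direct check gives $h^{(j)} = (h')^{(j)}$, forcing $\ker(\overline{\varphi}_j) = 0$; thus the entire basis of $\ker(\varphi_j)$ comes from $\ker(\varphi_{j+1}) \cdot x_j$, and the change in basis form from $g_{i-1,j}$ to $f_{i-1,j}$ is explained by the key identity $f_{i-1,j} = x_j \cdot g_{i-1,j}$ in $\mathcal{A}_j^h$, which follows from Lemma~\ref{lemma:4-1} combined with the fact that $g_{i-1, j-1}$ lies in the defining ideal of $\mathcal{A}_j^h$ by Lemma~\ref{lemma:4-2} (applicable because the corner condition gives $h(j-1) < i$). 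Second, in the special case $s = i$, one again has $h^{(i)} = (h')^{(i)}$ but now $r_s' = j+1 \neq j = r_s$, so $\overline{\varphi}_i$ is identified with the quotient map $\Psi_j : \mathcal{A}_j^{h^{(i)}} \to \mathcal{A}_{j+1}^{h^{(i)}}$ of Lemma~\ref{lemma: peel gij}, whose kernel has an explicit monomial basis involving $g_{i-1, j}(y)$.

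The hardest part will be managing the case analysis: verifying in each subcase of the position of $s$ relative to $j$ and $i$ that (a) the shifted box is genuinely a corner of $h^{(s)}$ (which relies on the corner condition $h(j-1) < i$ together with a careful tracking of where $r_s$ falls relative to $j-1$ and $j$), and (b) that the polynomial identification under $\varphi$ produces exactly the polynomial appearing in the inductive hypothesis for $h^{(s)}$ with the appropriate parameters. As a useful consistency check, Proposition~\ref{prop: regular seq in terms of Hilbert} shows that the Hilbert series of $\mathcal{A}_s^h$ and $\mathcal{A}_s^{h'}$ differ by exactly the generating function of the proposed basis, so that once the listed elements are shown to lie in $\ker(\varphi_s)$ and be linearly independent (equivalently, to span), the result follows.
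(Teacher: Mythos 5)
Your proposal follows essentially the same approach as the paper's proof: a double induction (outer on $n$, inner decreasing on $s$ from $\mathsf{p}(h)$), using the Snake-Lemma exact sequence on kernels, the isomorphism $\mathcal{A}_s^h/\langle x_s\rangle \cong \mathcal{A}_{r_s}^{h^{(s)}}$ to invoke the outer hypothesis on $\ker(\overline{\varphi}_s)$, and the same special treatment of $s=j$ (via $f_{i-1,j}=x_j\,g_{i-1,j}$ and Lemmas~\ref{lemma:4-1}--\ref{lemma:4-2}) and $s=i$ (via Lemma~\ref{lemma: peel gij}). Your case split by the position of $s$ relative to $j$ and $i$ and your tracking of how the removed corner $(i,j)$ transforms into a corner of $h^{(s)}$ in each regime match the paper's cases (a-1) through (a-6) and (b), and the Hilbert-series dimension check is a useful sanity-check addition.
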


\begin{proof} 
We prove the claim by induction on $n$. At each inductive step (with respect to $n$) we also use a decreasing induction on $s$. 

The base case is $n=2$. Here the only possible choice of Hessenberg function $h$ to which the statement of the proposition can be applied is $h=(2,2)$, where the choice of box to be removed is the $(2,1)$-th box. Thus the Hessenberg function $h'$, obtained by removing the $(2,1)$ box from $h$, is $h'=(1,2)$. Here $\mathsf{p}(h)=2$, so there are two cases to consider $s=1$ and $s=2$. The corner being removed is $(2,1)$, so $i=2$ and $j=1$. Suppose $s=1$. Then we have
\[
\mathcal{A}_1^{h} = \Q[x_1,x_2]/ \langle (x_1-x_2)x_1, x_1+x_2 \rangle \cong \Q[x_1]/\langle x_1^2 \rangle
\]
and 
\[
\mathcal{A}_1^{h'} = \Q[x_1,x_2]/ \langle x_1, x_1+x_2 \rangle \cong \Q.
\]
In this case we have $s=1\leq j=1$ so the statement of the proposition claims that $\{f_{1,1} \cdot x_2^{i_2} \, \mid \, 0 \leq i_2 \leq h(2)-2=0 \} = \{f_{1,1} \} = \{x_1\}$ is a basis of the kernel of $\varphi_1$. This can easily be checked since $\varphi_1$ is the map which sends $x_1$ to $0$ and $1$ to $1$. 

Now suppose $s=2$. Then 
\[
\mathcal{A}_2^h = \Q[x_1,x_2]/\langle g_{2,1}, f_{2,2} \rangle = \Q[x_1, x_2]/\langle x_1-x_2, x_1+x_2 \rangle \cong \Q
\]
and 
\[
\mathcal{A}_2^{h'} = \Q[x_1,x_2]/\langle g_{1,1}, f_{2,2} \rangle \cong \{0\}.
\]
Here the map $\varphi_2$ is the map sending $1$ to $0$. Since $j+1=2 \leq s=2$ we need to check that 
$\{ g_{1,1} \cdot x_2^{i_2} \, \mid \, 0 \leq i_2 \leq h(2)-2 = 0 \} = \{1\}$ is a basis of the kernel. This follows immediately from the computation of $\varphi_2$. This completes the base case of $n=2$. 

Now assume that the assertion of the proposition holds for $n-1$, with any allowable choices of $h$, box $(i,j)$ and $s$. We now prove that the assertion holds for $n$, using a descending induction on $s$. 

Let $h: [n] \to [n]$, the box $(i,j)$ and the Hessenberg function $h'$ be as in the statement of the proposition. We will make the induction argument separately for the two cases $\mathsf{p}(h) > j$ and $\mathsf{p}(h) < j$. (Note that $\mathsf{p}(h)=j$ cannot occur since we assume $h(j)=i>j$ so $h(j) \neq j$.)

\smallskip
\noindent \textbf{Case (a): $\mathsf{p}(h)>j$.} 
\smallskip

In this case, notice that $i \leq \mathsf{p}(h)$ because $j < \mathsf{p}(h)$ and Hessenberg functions are non-decreasing by assumption, so $i=h(j) \leq h(\mathsf{p}(h)) = \mathsf{p}(h)$. Thus we have that 
\[
1 \leq j < i \leq \mathsf{p}(h).
\]
The parameter $s$ is required to satisfy $1 \leq s \leq \mathsf{p}(h)$, so the base case of the 
decreasing induction is when $s=\mathsf{p}(h)$. 
The argument that follows has many cases and is rather long, so we first give a sketch of how the argument proceeds. 
By the short exact sequence from Lemma~\ref{lemma: kernel exact seq}
\begin{equation}\label{eq: SES}
0 \to \ker(\varphi_{s+1}) \xrightarrow{\times x_s} \ker(\varphi_s) \to \ker (\overline
\varphi_s) \to 0
\end{equation}
the union of a basis of $\ker(\varphi_{s+1})$ multiplied by $x_s$ and a basis of $\ker
(\overline \varphi_s)$ is a basis of $\ker(\varphi_s)$. Our arguments below use this fact 
together with the induction hypotheses on both $n$ and $s$ applied to $\ker(\overline \varphi_s)$ and 
$\ker(\varphi_{s+1})$ respectively. 
We also repeatedly use the identification 
\begin{equation}\label{eq: induction identification} 
\mathcal{A}^h_s/\langle x_s \rangle \cong \mathcal{A}^{h^{(s)}}_{r_s}.
\end{equation}
Finally, we use the notation $r'_s$ to denote the RHS of~\eqref{eq: def rs} for the Hessenberg function $h'$. It is immediate from the construction of $h'$ that $r_s=r'_s$ unless $s=i$; we use this repeatedly as well. 
Details of the arguments differ slightly depending on the case under consideration.

\smallskip 
\textbf{Case (a-1) (base case of descending induction on $s$): $s = \mathsf{p}(h)$. } 

\smallskip
 From Lemma~\ref{lemma: kernel exact seq} we know there is an exact sequence~\eqref{eq: SES}. 
On the other hand, since $\mathcal{A}^h_{\mathsf{p}(h)+1} = 0$ it follows that $\ker(\varphi_{\mathsf{p}(h)+1}) = 0$, so we have $\ker(\varphi_{\mathsf{p}(h)}) \cong \ker(\overline{\varphi_{\mathsf{p}(h)}})$. Next recall that $\mathcal{A}^h_{\mathsf{p}(h)}/\langle x_{\mathsf{p}(h)} \rangle \cong \mathcal{A}^{h^{(\mathsf{p}(h))}}_{r_{\mathsf{p}(h)}}$ by Proposition~\ref{proposition:5-1} and similarly 
$\mathcal{A}^{h'}_{\mathsf{p}(h)}/\langle x_{\mathsf{p}(h)} \rangle \cong \mathcal{A}^{h'^{(\mathsf{p}(h))}}_{r'_{\mathsf{p}(h)}}$. 
Now we take cases again.

\smallskip
\textbf{Case (a-1-(i)):} $i=s=\mathsf{p}(h)$. In this case we can see that $j =r_i =r_{\mathsf{p}(h)}$ (since $(i,j)$ must be a corner) and it follows that $h'^{(\mathsf{p}(h))} = h^{(\mathsf{p}(h))}$ and $r'_{\mathsf{p}(h)} = r_{\mathsf{p}(h)}+1=j+1$. Thus the map $\overline{\varphi_{\mathsf{p}(h)}}$ can be viewed as a map 
\[
\overline{\varphi_{\mathsf{p}(h)}}: \mathcal{A}^{h^{(\mathsf{p}(h))}}_j \to \mathcal{A}^{h^{(\mathsf{p}(h))}}_{j+1}
\]
and it is also the same as the homomorphism $\Psi_j$ considered in Lemma~\ref{lemma: peel gij} since $j \leq \mathsf{p}(h^{(\mathsf{p}(h))})$ by Lemma~\ref{lemma: induction step}. In particular, from Lemma~\ref{lemma: peel gij} we conclude that $\ker(\overline{\varphi_{\mathsf{p}(h)}}) = \ker(\Psi_{j})$ has a basis 
\begin{equation}\label{eq: y monomials case a-1-i}
\left\{ y_1^{i_1} \cdots y_{j-1}^{i_{j-1}} \cdot g_{h^{(\mathsf{p}(h))}(j), j}(y) \cdot y_{j+1}^{i_{j+1}} \cdots y_{n-1}^{i_{n-1}} \, \left\lvert \, 
\begin{array}{l} 
0 \leq i_m \leq h^{(\mathsf{p}(h))}(m)-m-1 \ \  \textup{ if } 1 \leq m \leq j-1 \\ 
0 \leq i_m \leq h^{(\mathsf{p}(h))}(m)-m \ \ \textup{ if } j+1 \leq m \leq n-1
\end{array} \right. 
\right\}. 
\end{equation}
The isomorphism $\mathcal{A}^h_{\mathsf{p}(h)}/\langle x_{\mathsf{p}(h)} \rangle \cong \mathcal{A}^{h^{(\mathsf{p}(h))}}_{r_{\mathsf{p}(h)}}$ of~\eqref{eq: induction identification} identifies $y_m$ with $x_m$ for $1 \leq m \leq \mathsf{p}(h)-1$ and $y_m$ with $x_{m+1}$ for $\mathsf{p}(h) \leq m \leq n-1$. 
Since $j < \mathsf{p}(h)$, we have $h^{(\mathsf{p}(h))}(j)=i-1$. This means that $g_{h^{(\mathsf{p}(h))}(j), j}(y)=g_{i-1,j}(y)$ is identified with $g_{i-1, j}(x)$.
Moreover, from the definition of $h^{(\mathsf{p}(h))}$ in~\eqref{eq: def h ph m}, it follows that the monomials in~\eqref{eq: y monomials case a-1-i} are identified with the monomials 
\[
\left\{ x_1^{i_1} \cdots x_{j-1}^{i_{j-1}} \cdot g_{i-1, j}(x) \cdot x_{j+1}^{i_{j+1}} \cdots x_{\mathsf{p}(h)-1}^{i_{\mathsf{p}(h)-1}} \cdot x_{\mathsf{p}(h)+1}^{i_{\mathsf{p}(h)+1}} \cdots x_n^{i_n}  \, \left \lvert \,  
\begin{array}{l}
0 \leq i_m \leq h(m)-m-1 \ \ \textup{ if } 1 \leq m \leq \mathsf{p}(h)-1 \\ 
\phantom{0 \leq i_m \leq h^{(\mathsf{p}(h))}(m)-m-1} \  \  \textup{ and } m \neq j \\  
0 \leq i_m \leq h(m)-m \ \ \textup{ if } \mathsf{p}(h)+1 \leq m \leq n 
\end{array} \right.
\right\}. 
\]
Recall that $s=\mathsf{p}(h)$ in this case, and $h(\mathsf{p}(h)) = \mathsf{p}(h)$ by definition, so the condition $0 \leq i_s \leq h(s)-s$ implies $i_s = 0$ and hence $x_s$ does not appear in the monomials in the set~\eqref{eq: relations Asj g} and it can be seen that the monomials in the equation above are exactly those in~\eqref{eq: relations Asj g}, as desired. This completes the case (a-1-(i)). 

\smallskip
\textbf{Case (a-1-(ii)):} $i < \mathsf{p}(h)$. 

In this case, note first that $j<r_{\mathsf{p}(h)}$ since $h(j)=i<\mathsf{p}(h)$. Moreover, because $h$ and $h'$ differ by a box with $i<\mathsf{p}(h)$ and $j < \mathsf{p}(h)$, it follows that $r'_{\mathsf{p}(h)} = r_{\mathsf{p}(h)}$ and that $(h')^{(\mathsf{p}(h))}$ is the Hessenberg function obtained by removing the box $(i,j)$ from $h^{\mathsf{p}(h)}$. Thus, from the inductive hypothesis on $n$ it follows that the kernel of the map 
\[
\overline{\varphi_{\mathsf{p}(h)}}: \mathcal{A}^{h^{(\mathsf{p}(h))}}_{r_{\mathsf{p}(h)}} \to \mathcal{A}^{(h')^{(\mathsf{p}(h))}}_{r'_{\mathsf{p}(h)}} = \mathcal{A}^{(h')^{(\mathsf{p}(h))}}_{r_{\mathsf{p}(h)}}
\]
has as an additive basis the following set of monomials: 
\[
\left\{ y_1^{i_1} \cdots y_{j-1}^{i_{j-1}} \cdot g_{i-1, j}(y) \cdot y_{j+1}^{i_{j+1}} \cdots
y_{r_{\mathsf{p}(h)-1}}^{i_{r_{\mathsf{p}(h)-1}}} y_{r_{\mathsf{p}(h)}}^{i_{r_{\mathsf{p}(h)}}} \cdots 
 y_{n-1}^{i_{n-1}} \, \left\lvert \, 
\begin{array}{l} 
0 \leq i_m \leq h^{(\mathsf{p}(h))}(m)-m-1 \ \  \textup{ if } 1 \leq m \leq r_{\mathsf{p}(h)}-1   \\ 
\phantom{0 \leq i_m \leq h^{(\mathsf{p}(h))}(m)-m-1} \  \  \textup{ and } m \neq j \\ 
0 \leq i_m \leq h^{(\mathsf{p}(h))}(m)-m \ \ \textup{ if } r_{\mathsf{p}(h)} \leq m \leq n-1
\end{array} \right.
\right\}. 
\]
Using the same isomorphism $\mathcal{A}^h_{\mathsf{p}(h)}/\langle x_{\mathsf{p}(h)} \rangle \cong \mathcal{A}^{h^{(\mathsf{p}(h))}}_{r_{\mathsf{p}(h)}}$ of~\eqref{eq: induction identification} as in the case (a-1-(i)) it can be seen that the above monomials are identified with 
\[
\left\{ x_1^{i_1} \cdots x_{j-1}^{i_{j-1}} \cdot g_{i-1, j}(x) \cdot x_{j+1}^{i_{j+1}} \cdots x_{\mathsf{p}(h)-1}^{i_{\mathsf{p}(h)-1}} \cdot x_{\mathsf{p}(h)+1}^{i_{\mathsf{p}(h)+1}} \cdots x_n^{i_n} \, \left\lvert \, 
\begin{array}{l}
0 \leq i_m \leq h(m)-m-1 \ \ \textup{ if } 1 \leq m \leq \mathsf{p}(h)-1  \\ 

\phantom{0 \leq i_m \leq h^{(\mathsf{p}(h))}(m)-m-1} \ \ \textup{ and } m \neq j \\ 
0 \leq i_m \leq h(m)-m \ \ \textup{ if } \mathsf{p}(h)+1 \leq m \leq n 
\end{array} \right.
\right\}
\]
which is the set of monomials appearing in~\eqref{eq: relations Asj g}, by an argument similar to the case (a-1-(i)). 
This completes case (a-1), the base case of the descending induction on $s$. 
\smallskip

Going forward we assume by induction that the result is known for higher values of $s$. 

\smallskip
\noindent \textbf{Case (a-2):} $i < s < \mathsf{p}(h)$.

Consider the exact sequence of~\eqref{eq: SES}. 
From the assumptions it follows that $j < r_s\leq s$, $r'_s = r_s$ and that $(h')^{(s)}$ is the Hessenberg function obtained by removing the box $(i,j)$ from $h^{(s)}$. Hence from the inductive hypothesis on $n$ we know that the kernel of 
\[
\overline{\varphi_s}: \mathcal{A}^{h^{(s)}}_{r_s} \to \mathcal{A}^{(h')^{(s)}}_{r'_s} =  \mathcal{A}^{(h')^{(s)}}_{r_s}
\]
has, as an additive basis, the set of monomials 
\[
\left\{ y_1^{i_1} \cdots y_{j-1}^{i_{j-1}} \cdot g_{i-1, j}(y) \cdot y_{j+1}^{i_{j+1}} \cdots
y_{r_{s-1}}^{i_{r_{s-1}}} y_{r_{s}}^{i_{r_{s}}} \cdots 
 y_{n-1}^{i_{n-1}} \, \left\lvert \, 
\begin{array}{l} 
0 \leq i_m \leq h^{(s)}(m)-m-1 \ \  \textup{ if } 1 \leq m \leq r_{s}-1 \\
\phantom{0 \leq i_m \leq h^{(\mathsf{p}(h))}(m)-m-1} \ \ \textup{ and } m \neq j \\ 
0 \leq i_m \leq h^{(s)}(m)-m \ \ \textup{ if } r_{s} \leq m \leq n-1
\end{array} \right.
\right\}. 
\]
The isomorphism $\mathcal{A}^h_{s}/\langle x_{s} \rangle \cong \mathcal{A}^{h^{(s)}}_{r_{s}}$ of~\eqref{eq: induction identification} identifies 
$y_m$ with $x_m$ for $1 \leq m \leq s-1$ and $y_m$ with $x_{m+1}$ for $s \leq m \leq n-1$. By similar considerations as in the previous cases, it follows that the above monomials are identified with the monomials 
\begin{equation}\label{eq: n-1 monomials a-2} 
\left\{ x_1^{i_1} \cdots x_{j-1}^{i_{j-1}} \cdot g_{i-1, j}(x) \cdot x_{j+1}^{i_{j+1}} \cdots x_{s-1}^{i_{s-1}} \cdot x_{s+1}^{i_{s+1}} \cdots x_n^{i_n} \, \left\lvert \, 
\begin{array}{l}
0 \leq i_m \leq h(m)-m-1 \ \ \textup{ if } 1 \leq m \leq s-1  \\ 
\phantom{0 \leq i_m \leq h(m)-m-1} \ \ \textup{ and } m \neq j \\ 
0 \leq i_m \leq h(m)-m \ \ \textup{ if } s+1 \leq m \leq n 
\end{array} \right.
\right\}.
\end{equation}
Furthermore, by the inductive hypothesis on $s$ we know that $\ker(\varphi_{s+1})$ has the following set of monomials 
\[
\left\{ x_1^{i_1} \cdots x_{j-1}^{i_{j-1}} \cdot g_{i-1, j}(x) \cdot x_{j+1}^{i_{j+1}} \cdots x_{s}^{i_{s}} \cdot x_{s+1}^{i_{s+1}} \cdots x_n^{i_n} \, \left\lvert \, 
\begin{array}{l}
0 \leq i_m \leq h(m)-m-1 \ \ \textup{ if } 1 \leq m \leq s \\
\phantom{0 \leq i_m \leq h(m)-m-1} \ \ \textup{ and } m \neq j \\ 
0 \leq i_m \leq h(m)-m \ \ \textup{ if } s+1 \leq m \leq n 
\end{array} \right.
\right\}
\]
as an additive basis. The map $\ker(\varphi_{s+1}) \to \ker(\varphi_s)$ multiplies these monomials by the variable $x_s$, so the image of this set in $\ker(\varphi_s)$ is 
\begin{equation}\label{eq: s+1 monomials a-2}
\left\{ x_1^{i_1} \cdots x_{j-1}^{i_{j-1}} \cdot g_{i-1, j}(x) \cdot x_{j+1}^{i_{j+1}} \cdots x_{s}^{i_{s}} \cdot x_{s+1}^{i_{s+1}} \cdots x_n^{i_n} \, \left\lvert \, 
\begin{array}{l}
0 \leq i_m \leq h(m)-m-1 \ \ \textup{ if } 1 \leq m \leq s-1 \\
\phantom{0 \leq i_m \leq h(m)-m-1} \ \ \textup{ and } m \neq j \\ 
1 \leq i_s \leq h(s)-s \ \ \textup{ if } m=s \\ 
0 \leq i_m \leq h(m)-m \ \ \textup{ if } s+1 \leq m \leq n 
\end{array} \right.
\right\}.
\end{equation}
By the exactness of the sequence~\eqref{eq: SES} we know that the union of~\eqref{eq: s+1 monomials a-2}
and~\eqref{eq: n-1 monomials a-2} is an additive basis of $\ker(\varphi_s)$, and this set coincides with~\eqref{eq: relations Asj g} as desired. 

\smallskip
\noindent \textbf{Case (a-3):} $s=i$. 
\smallskip

In this case we have $h^{(s)} = (h')^{(s)}$ and $r_s = j$ and $r'_s = j+1$. From the same exact sequence as 
in~\eqref{eq: SES}
we wish to use the inductive hypotheses to obtain monomial bases for $\ker(\overline{\varphi}_s)$ and $\ker(\varphi_{s+1})$. In this case an additive basis of the kernel of the map $\overline{\varphi}_s$ can be obtained using Lemma~\ref{lemma: peel gij} as in the case (a-1-(i)) and a basis for the kernel of $\varphi_{s+1}$ can be described by induction on $s$ as in case (a-2) above. A similar argument using~\eqref{eq: SES} yields the result in this case. 

\smallskip
\noindent \textbf{Case (a-4):} $j < s < i$. 
\smallskip

In this case, it can be seen from the fact that $h(j)=i >s$ and the fact that $(i,j)$ is a corner, that $r_s \leq j$. Moreover, $(h')^{(s)}$ is the Hessenberg function obtained from $h^{(s)}$ by removing the $(i-1,j)$-th box (note that since $s<i$, the $s$-th row, which gets removed in $h^{(s)}$, lies above the $i$-th row). From these considerations we see that the map 
\[
\overline{\varphi}_s: \mathcal{A}_{r_s}^{h^{(s)}} \to \mathcal{A}_{r'_s = r_s}^{(h')^{(s)}}
\]
has a kernel which can be described by the induction hypothesis on $n$. Specifically, $\ker(\overline{\varphi}_s)$ has a basis consisting of the monomials 
\begin{equation}\label{eq: monomials a-4}
\left\{ 
y_1^{i_1} \cdots y_{r_s-1}^{i_{r_s-1}} y_{r_s}^{i_{r_s}} \cdots y_{j-1}^{i_{j-1}} \cdot f_{(i-1)-1,j}(y) \cdot y_{j+1}^{i_{j+1}} \cdots y_{n-1}^{i_{n-1}} \, \left\lvert \, 
\begin{array}{l} 
0 \leq i_m \leq h^{(s)}(m)-m-1 \ \ \textup{ if } 1 \leq m \leq r_s-1 \\
0 \leq i_m \leq h^{(s)}(m)-m \ \ \textup{ if } r_s \leq m \leq n-1 \\
\phantom{0 \leq i_m \leq h(m)-m-1} \ \ \textup{ and } m \neq j \\ 
\end{array} \right.
\right\}
\end{equation}
where the inequality $r_s \leq j$ implies that we use $f_{(i-1)-1,j} = f_{i-2,j}$ instead of $g_{i-2,j}$ in the expressions above. 

Here we note that $f_{i-2,j}(y) = \sum_{k=1}^j \left( \prod_{\ell=j+1}^{i-2} (y_k - y_\ell) \right) y_k$ is identified with 
\[
\sum_{k=1}^j \left( \prod_{\ell=j+1}^{s-1} (x_k - x_\ell) \right) x_k + 
\sum_{k=1}^j \left( \prod_{\ell=s}^{i-2} (x_k - x_{\ell+1}) \right) x_k 
= 
\sum_{k=1}^j \left( \prod_{\ell=j+1, \ell \neq s}^{i-1} (x_k - x_\ell) \right) x_k 
\]
under the same isomorphism as in the cases above. On the other hand, we have 
\[
g_{i-1,j}(x) = \sum_{k=1}^j \left( \prod_{\ell=j+1}^{i-1} (x_k - x_\ell) \right) \equiv 
\sum_{k=1}^j \left( \prod_{\ell =j+1, \ell \neq s}^{i-1} (x_k - x_\ell) \right) \cdot x_k  \ \ \textup{ (modulo $x_s$) } 
\]
and since $\mathcal{A}_{r_s}^{h^{(s)}} \cong \mathcal{A}_s^{h}/\langle x_s \rangle$ quotients by $x_s$ we conclude that under this isomorphism, the monomials~\eqref{eq: monomials a-4} are identified with 
\[
\left\{ 
x_1^{i_1} \cdots x_{j-1}^{i_{j-1}} \cdot g_{i-1, j}(x) \cdot x_{j+1}^{i_{j+1}} \cdots x_{s-1}^{i_{s-1}} x_{s+1}^{i_{s+1}} \cdots x_n^{i_n} \, \left\lvert \, 
\begin{array}{l} 
0 \leq i_m \leq h(m)-m-1 \ \ \textup{ if } 1 \leq m \leq s-1 \\ 
0 \leq i_m \leq h(m)-m \ \ \textup{ if } s+1 \leq m \leq n 
\end{array} \right.
\right\}.  
\]
The kernel of $\varphi_{s+1}$ can be obtained using the descending induction hypothesis on $s$, and from here the remainder of the argument is as in the cases above. 

\smallskip
\noindent \textbf{Case (a-5):} $s=j$. 
\smallskip

In this case, since $s=j$, the $j$-th row and column are removed from $h$ to create $h^{(s)}$, which means that (since $h$ and $h'$ only differ in the $j$-th column) $h^{(s)} = (h')^{(s)}$. Also, since $i>j$ it follows that $r_s = r'_s$. Thus in this case we have $\ker(\overline{\varphi}_j) = 0$ and we have $\ker(\varphi_{j+1}) \cong \ker(\varphi_j)$ via the isomorphism which multiplies by $x_j$. By our descending induction hypothesis on $s$, we can take as a basis of $\ker(\varphi_{j+1})$ the monomials 
\[
\left\{ 
x_1^{i_1} \cdots x_{j-1}^{i_{j-1}} \cdot g_{i-1, j}(x) \cdot x_{j+1}^{i_{j+1}} \cdots x_n^{i_n} \, \left\lvert \, 
\begin{array}{l} 
0 \leq i_m \leq h(m)-m-1 \ \ \textup{ if } 1 \leq m \leq j-1 \\ 
0 \leq i_m \leq h(m)-m \ \ \textup{ if } j+1 \leq m \leq n 
\end{array} \right.
\right\}.  
\]
We thus obtain a basis of $\ker(\varphi_j)$ by multiplying by $x_j$. However, we also note that $f_{i-1,j} \equiv x_j \cdot g_{i-1,j}$ modulo $g_{i-1, j-1}$ by Lemma~\ref{lemma:4-1}, and $h(j-1) \leq i-1$ since $(i,j)$ is a corner. Thus 
$g_{i-1,j-1} \in \langle g_{h(1),1}, \ldots, g_{h(j-1),j-1} \rangle$ by Lemma~\ref{lemma:4-2} and we conclude $f_{i-1,j} = x_j \cdot g_{i-1,j}$ in $\mathcal{A}_j^h$. Thus we may replace $x_j \cdot g_{i-1,j}$ in the expressions for the monomials and we obtain that the following monomials 
\[
\left\{ 
x_1^{i_1} \cdots x_{j-1}^{i_{j-1}} \cdot f_{i-1,j}(x) \cdot x_{j+1}^{i_{j+1}} \cdots x_n^{i_n} \, \left\lvert \, 
\begin{array}{l} 
0 \leq i_m \leq h(m)-m-1 \ \ \textup{ if } 1 \leq m \leq j-1 \\ 
0 \leq i_m \leq h(m)-m \ \ \textup{ if } j+1 \leq m \leq n 
\end{array} \right.
\right\}
\]
are a basis for $\ker(\varphi_j)$, as desired. 

\smallskip
\noindent \textbf{Case (a-6):} $1 \leq s < j$. 
\smallskip

By assumption, $j<i=h(j)$ so in this case we have $s<j<i$. This implies that $(h')^{(s)}$ is the Hessenberg function obtained from $h^{(s)}$ by removing the $(i-1,j-1)$-st box, and it also implies that $r_s = r'_s$. Moreover, since $r_s \leq s$ and $s<j$, we have $r_s < j$.  Thus the kernel of $\overline{\varphi}_s: \mathcal{A}_{r_s}^{h^{(s)}}\to \mathcal{A}_{r'_s = r_s}^{(h')^{(s)}}$ can be described using the inductive hypothesis on $n$ and we obtain that the set of monomials 
\[
\left\{ y_1^{i_1} \cdots y_{r_s-1}^{i_{r_s-1}} y_{r_s}^{i_{r_s}} \cdots y_{j-2}^{i_{j-2}} \cdot f_{i-2, j-1}(y) \cdot y_j^{i_j} \cdots y_{n-1}^{i_{n-1}} \, \left\lvert \, 
\begin{array}{l} 
0 \leq i_m \leq h^{(s)}(m)-m-1 \ \ \textup{ if } 1 \leq m \leq r_s - 1 \\
0 \leq i_m \leq h^{(s)}(m)-m \ \ \textup{ if } r_s \leq m \leq n-1  
\end{array} \right.  
\right\}
\]
is a basis of $\ker(\overline{\varphi}_s)$. Under the identification of $y_m$ with $x_m$ for $1 \leq m \leq s-1$ and $y_m$ with $x_{m+1}$ for $s \leq m \leq n-1$ in the isomorphism~\eqref{eq: induction identification} we can compute that 
\[
f_{i-2, j-1}(y) = \sum_{k=1}^{j-1} \left( \prod_{\ell=j}^{i-2} (y_k - y_\ell) \right) y_k
\]
is identified with 
\begin{equation*}
\begin{split} 
& \sum_{k=1}^{s-1} \left(\prod_{\ell=j}^{i-2} (x_k - x_{\ell+1})\right) x_k + 
\sum_{k=s}^{j-1} \left( \prod_{\ell=j}^{i-2} (x_{k+1} - x_{\ell+1})\right) x_{k+1} \\
& = 
\sum_{k=1}^{s-1} \left(\prod_{\ell=j+1}^{i-1} (x_k - x_{\ell})\right) x_k 
 +  \sum_{k=s}^{j-1} \left( \prod_{\ell=j+1}^{i-1} (x_{k+1} - x_{\ell}) \right) x_{k+1} \\ 
& = \sum_{k=1}^{s-1} \left(\prod_{\ell=j+1}^{i-1} (x_k - x_{\ell})\right) x_k
+ \sum_{k=s+1}^{j} \left( \prod_{\ell=j+1}^{i-1} (x_{k} - x_{\ell}) \right) x_{k} \\ 
& \equiv \sum_{k=1}^j \left( \prod_{\ell=j+1}^{i-1} (x_{k} - x_{\ell}) \right) x_{k} \ \ \textup{ modulo $x_s$ } \\
& = f_{i-1,j}(x).
\end{split} 
\end{equation*}
Since the ring $\mathcal{A}_s^{h}/\langle x_s \rangle$ quotients by $x_s$, this implies that as a basis of $\ker(\overline{\varphi}_s)$ we may take the monomials 
\[
\left\{ 
x_1^{i_1} \cdots x_{s-1}^{i_{s-1}} x_{s+1}^{i_{s+1}} \cdots x_{j-1}^{i_{j-1}} \cdot f_{i-1,j}(x) \cdot x_{j+1}^{i_{j+1}} \cdots x_n^{i_n}  \, \left\lvert \, 
\begin{array}{l} 
0 \leq i_m \leq h(m)-m-1 \ \ \textup{ if } 1 \leq m \leq s-1 \\
0 \leq i_m \leq h(m)-m \ \ \textup{ if } s+1 \leq m \leq n, m \neq j 
\end{array} \right. 
\right\}. 
\]
 The kernel of $\varphi_{s+1}$ has a basis which may be described by the descending hypothesis on $s$, and the rest of the argument proceeds in a similar manner to the above cases. This completes the arguments for Case (a). 
 
 The argument for Case (b) for $\mathsf{p}(h)<j$ also uses a descending induction on $s$. The argument is similar to that for the case (a-6), so we omit details (note that for the base case $s=\mathsf{p}(h)$, the argument is in fact simpler than case (a-6) because the kernel of $\varphi_{s+1}$ is $0$). 
\end{proof}

The main theorem of this section follows immediately as a corollary. 

\begin{proof}[Proof of Theorem~\ref{theorem: relation on Schuberts}]
It suffices to apply Proposition~\ref{proposition: relation on Schuberts for Asj} to the special case $s=1$, since the rings $\mathcal{A}_1^h$ and $\mathcal{A}_1^{h'}$ are isomorphic to $H^*(\Hess(\mathsf{N},h))$ and $H^*(\Hess(\mathsf{N},h'))$ respectively. 
\end{proof}

In order to complete the argument that derives the linear relations on the Schubert classes in $H^*(\Hess(\mathsf{N},h))$, we need a previous result of the second author. For $w \in \S_n$ a permutation, we let $\mathfrak{S}_w$ denote the corresponding Schubert polynomial. The following result says that the polynomials $f_{i-1,j}$ in this manuscript can be written as an alternating sum of Schubert polynomials.

\begin{theorem}\label{theorem: alternating Schuberts}(\cite[Theorem 1.1]{Horiguchi}) 
 Let $i, j$ be positive integers with $1 \leq j < i \leq n$.  
 Then 
 \[
f_{i-1,j} = \sum_{k=1}^{i-j} (-1)^{k-1} \mathfrak{S}_{w_k^{(i,j)}}
\]
where $w_k^{(i,j)}$ is the permutation in $\S_n$ defined by
\[
w_k^{(i,j)} := (s_{i-k}s_{i-k-1} \cdots s_j )(s_{i-k+1}s_{i-k+2} \cdots s_{i-1})
\]
and $s_r$ denotes the transposition of $r$ and $r + 1$ for any $r$ with $1 \leq r \leq n-1$ and we take 
the convention that $(s_{i-k+1}s_{i-k+2} \cdots s_{i-1}) = e$, where $e$ denotes the identity element, whenever $k = 1$. \end{theorem}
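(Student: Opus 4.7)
The plan is to prove this identity by induction on $i$.

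The base case $i = j + 1$ is immediate: $f_{j, j} = x_1 + \cdots + x_j$ by definition, and the right-hand side reduces to the single term $\mathfrak{S}_{w_1^{(j+1, j)}} = \mathfrak{S}_{s_j} = x_1 + \cdots + x_j$, since the second product defining $w_1^{(j+1,j)}$ is empty.

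For the inductive step with $i \geq j + 2$, I would apply the recursion
\[
f_{i-1, j} = f_{i-2, j-1} + (x_j - x_{i-1}) f_{i-2, j}
\]
from Lemma~\ref{lemma: recursion} (with $i$ replaced by $i-1$, and the convention $f_{i-2, 0} := 0$). Substituting the inductive hypothesis into both terms on the right, the problem reduces to verifying the purely Schubert-polynomial identity
\[
T_{i-1, j-1} + (x_j - x_{i-1}) T_{i-1, j} = T_{i, j},
\]
where $T_{i, j} := \sum_{k=1}^{i-j}(-1)^{k-1}\mathfrak{S}_{w_k^{(i, j)}}$. This can be attacked via Monk's formula: writing $x_r = \mathfrak{S}_{s_r} - \mathfrak{S}_{s_{r-1}}$ and expanding $\mathfrak{S}_{s_r} \cdot \mathfrak{S}_{w_k^{(i-1, j)}}$ as a sum $\sum_{a \leq r < b} \mathfrak{S}_{w_k^{(i-1, j)} \cdot t_{ab}}$ over length-increasing transpositions, one obtains an explicit expansion of the left-hand side. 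The critical combinatorial task is to identify each permutation $w_k^{(i-1, j)} \cdot t_{ab}$ in terms of the $w_{k'}^{(i', j')}$ family using their explicit one-line notation (computable from the reduced-word presentation), and to verify that the resulting sums telescope to $T_{i, j}$.

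The main obstacle will be the detailed combinatorial bookkeeping in this Monk expansion: showing that the contributions from multiplication by $x_j$ and by $x_{i-1}$ largely cancel in pairs, and that the surviving terms combine with $T_{i-1, j-1}$ to form precisely $T_{i, j}$. As an alternative approach, one could work via divided difference operators: a direct calculation (by splitting the factor $(x_k - x_{i-1})$ in the product defining $f_{i-1,j}$) gives $\partial_j f_{i-1, j} = f_{i-1, j+1}$, and an analogous calculation using the commutation $s_{i-1} s_j = s_j s_{i-1}$ together with the braid relation shows that $\partial_j T_{i, j} = T_{i, j+1}$ (with the term $k = i-j$ dying because its length increases). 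Combined with the fact that both $f_{i-1, j}$ and $T_{i, j}$ are invariant under the action of $S_j \times S_{i-1-j}$ on $x_1, \ldots, x_{i-1}$ (so $\partial_k$ kills both sides for $k \in \{1, \ldots, i-2\} \setminus \{j\}$), this would pin down the identity modulo matching a base case in the finite-dimensional space of such invariants in each degree.
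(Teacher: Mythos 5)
This theorem is imported by the paper from \cite[Theorem~1.1]{Horiguchi}, so the manuscript contains no proof to compare yours against; I can only evaluate the sketch on its own terms.

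Your overall strategy — induct on $i$ using the recursion $f_{i-1,j}=f_{i-2,j-1}+(x_j-x_{i-1})f_{i-2,j}$ from Lemma~\ref{lemma: recursion}, thereby reducing to the polynomial identity $T_{i-1,j-1}+(x_j-x_{i-1})T_{i-1,j}=T_{i,j}$ with $T_{i,j}:=\sum_{k=1}^{i-j}(-1)^{k-1}\mathfrak{S}_{w_k^{(i,j)}}$ — is sound, and the base case $i=j+1$ is handled correctly. But the sketch stops precisely where the content of the theorem lies: neither of the two attack routes on the $T$-identity is actually carried out. The Monk expansion is asserted to ``telescope,'' but nothing is shown; this is the whole substance of the proof, not a bookkeeping footnote.

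The alternative divided-difference route has a genuine logical gap as stated. You correctly observe $\partial_j f_{i-1,j}=f_{i-1,j+1}$ and $\partial_j T_{i,j}=T_{i,j+1}$, and that both sides are $S_j\times S_{i-1-j}$-invariant, so $\partial_k$ kills the difference $D_j:=f_{i-1,j}-T_{i,j}$ for every $k\in\{1,\dots,i-2\}$. But $\partial_k D_j=0$ for all such $k$ only forces $D_j$ to be a symmetric polynomial in $x_1,\dots,x_{i-1}$, and the space of symmetric polynomials of degree $i-j$ in those variables is nonzero. The phrase ``pin down the identity modulo matching a base case'' papers over exactly this: $\partial_j$ is far from injective (its kernel consists of the $s_j$-invariants), so knowing $D_{j+1}=0$ and $\partial_j D_j=D_{j+1}$ does not let you descend from $j+1$ to $j$. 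Some additional input — e.g.\ comparing coefficients of a distinguished monomial such as $x_1^{i-j}$, or a stability/leading-term argument — is required to rule out a nonzero symmetric remainder, and it is not supplied. A smaller, but real, omission: when $j=1$ you use the convention $f_{i-2,0}=0$, but the corresponding $T_{i-1,0}$ is not defined by the displayed formula (the word $s_{i-1-k}\cdots s_0$ involves $s_0$), so the induction on $i$ needs this edge case treated separately or a compatible convention introduced.
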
 

\begin{remark} 
 The permutation $w_k^{(i,j)}$ appearing in Theorem~\ref{theorem: alternating Schuberts} can also be described in one-line notation as follows:
\[
w_k^{(i,j)} = 1 2 \cdots j-1 \ i-k+1 \ j \ j+1 \cdots \widehat{i-k} \ \widehat{i-k+1} \cdots i \ i-k \ i+1 \cdots n,  
\]
where the caret sign \ $\widehat{}$ \ over an integer $p$ means that the $p$ is to be omitted.
In other words, the permutation $w_k^{(i,j)}$ sends the set $[n] \setminus \{j, j+1, \ldots, i \}$ to itself identically,  the $j$-th value $w_k^{(i,j)}(j)$ is $i-k+1$, the $i$-th value $w_k^{(i,j)}(i)$ is $i-k$, and the remaining entries in the one-line notation are arranged in increasing order from the $(j+1)$-st position to the $(i-1)$-st position.

\end{remark}

Recalling that the Schubert polynomial $\mathfrak{S}_w$ is identified with the Schubert class $\sigma_w$ under the isomorphism \eqref{eq: Borel}, the following is now straightforward.

\begin{corollary} \label{corollary:SchubertRelations}
Let box $(i,j)$ be a corner of $h$, and consider the map~\eqref{eq: restriction for Schubert}. 
The set of linear equations given by 
\begin{equation*}
\sum_{k=1}^{i-j} (-1)^{k-1} x_1^{i_1} \cdots x_{j-1}^{i_{j-1}} \cdot \overline{\sigma_{w_k^{(i,j)}} \cdot} x_{j+1}^{i_{j+1}} \cdots x_n^{i_n} 
= 0, 
\end{equation*} 
as $\mathbf{i} := (i_1,\ldots,i_{j-1},i_{j+1},\ldots,i_n) \in \Z^{n-1}_{\geq 0}$ varies over all  $\mathbf{i}$ such that 
$0 \leq i_m \leq h(m)-m$ for all $m$ with $1 \leq m \leq n, m \neq j$, form a basis for the kernel of~\eqref{eq: restriction for Schubert}.
\end{corollary}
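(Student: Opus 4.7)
The plan is to derive Corollary~\ref{corollary:SchubertRelations} as a direct combination of Theorem~\ref{theorem: relation on Schuberts} (which gives a basis of the kernel in terms of the polynomials $f_{i-1,j}$) and Theorem~\ref{theorem: alternating Schuberts} (which expresses each $f_{i-1,j}$ as an alternating sum of Schubert polynomials). Essentially all the substantive work has been done elsewhere; the task here is to translate the basis of Theorem~\ref{theorem: relation on Schuberts} from the ``$f$-language'' into the ``$\sigma$-language''.

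First, I would invoke Theorem~\ref{theorem: relation on Schuberts} to conclude that the set of elements
\[
\bigl\{\, x_1^{i_1} \cdots x_{j-1}^{i_{j-1}} \cdot f_{i-1,j} \cdot x_{j+1}^{i_{j+1}} \cdots x_n^{i_n} \, \bigm| \, \mathbf{i} \in \Z^{n-1}_{\geq 0}, \ 0 \leq i_m \leq h(m)-m \text{ for } m \neq j \,\bigr\}
\]
forms a basis of the kernel of the surjection $H^*(\Hess(\mathsf{N},h)) \to H^*(\Hess(\mathsf{N},h'))$. Next, Theorem~\ref{theorem: alternating Schuberts} states the identity $f_{i-1,j} = \sum_{k=1}^{i-j}(-1)^{k-1}\mathfrak{S}_{w_k^{(i,j)}}$ as polynomials in $\Z[x_1,\ldots,x_n]$. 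Under the Borel presentation recalled in Remark~\ref{remark:CohomologyFlag}, this polynomial identity descends to an equality $f_{i-1,j} = \sum_{k=1}^{i-j}(-1)^{k-1}\sigma_{w_k^{(i,j)}}$ in $H^*(\Flags(\C^n))$. Applying the restriction map $H^*(\Flags(\C^n)) \to H^*(\Hess(\mathsf{N},h))$, which is a ring homomorphism sending $\sigma_w$ to $\overline{\sigma_w}$, yields the corresponding identity $f_{i-1,j} = \sum_{k=1}^{i-j}(-1)^{k-1}\overline{\sigma_{w_k^{(i,j)}}}$ in $H^*(\Hess(\mathsf{N},h))$.

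Substituting this expression into each basis element from the first step produces precisely the elements displayed in the statement of the corollary. Since the substitution is carried out inside a ring (multiplication by the monomial $x_1^{i_1}\cdots x_{j-1}^{i_{j-1}} \cdot x_{j+1}^{i_{j+1}}\cdots x_n^{i_n}$ distributes over the sum of Schubert classes), the resulting set is identical as a subset of $H^*(\Hess(\mathsf{N},h))$ to the basis produced by Theorem~\ref{theorem: relation on Schuberts}, and the fact that it forms a basis of the kernel of~\eqref{eq: restriction for Schubert} is therefore automatic. There is no real obstacle here; the only point requiring mild care is to confirm that Theorem~\ref{theorem: alternating Schuberts}, which is stated as a polynomial identity in $\Z[x_1,\ldots,x_n]$, passes cleanly through the two successive quotients $\Z[x_1,\ldots,x_n] \twoheadrightarrow H^*(\Flags(\C^n)) \twoheadrightarrow H^*(\Hess(\mathsf{N},h))$ to justify the replacement of $f_{i-1,j}$ by the alternating sum of $\overline{\sigma_{w_k^{(i,j)}}}$'s.
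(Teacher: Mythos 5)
Your proposal follows exactly the same route as the paper: the paper's proof is the one-line remark that the corollary ``follows immediately from Theorem~\ref{theorem: alternating Schuberts} and Theorem~\ref{theorem: relation on Schuberts},'' and your argument simply spells out the intermediate step of pushing the polynomial identity for $f_{i-1,j}$ through the Borel presentation and the restriction map. The reasoning is correct and no gap remains.
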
 

\begin{proof} 
This follows immediately from Theorem~\ref{theorem: alternating Schuberts} and Theorem~\ref{theorem: relation on Schuberts}.  
\end{proof} 

The next formula is equivalent to Monk's formula which is well-known on Schubert calculus.

\begin{theorem} [{Monk's formula \cite{Monk}, see also \cite[p.180--181]{Fulton}}] \label{theorem:Monk}
Let $\{\sigma_{w} \}_{w\in S_n}$ be the Schubert classes in the cohomology of the flag variety. 
Then we have
\begin{equation*} 
x_r \cdot \sigma_{w}=\sum_{w'} \sigma_{w'}-\sum_{w''} \sigma_{w''}
\end{equation*}
where the first sum is over those $w'$ obtained from $w$ by interchanging the values of $w$ in positions $r$ and $q$ for those $r<q$ with $w(r)<w(q)$, and $w(i)$ is not in the interval $(w(r),w(q))$ for any $i$ in the interval $(r,q)$, and the second sum is over those $w''$ obtained from $w$ by interchanging the values of $w$ in positions $r$ and $p$ for those $p<r$ with $w(p)<w(r)$, and $w(i)$ is not in the interval $(w(p),w(r))$ for any $i$ in the interval $(p,r)$.
\end{theorem}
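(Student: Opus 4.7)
The plan is to derive the stated formula from the classical version of Monk's formula, which expresses $\sigma_{s_r} \cdot \sigma_w$ in the Schubert basis. Specifically, the standard form (see \cite[p.180--181]{Fulton}) asserts that
\[
\sigma_{s_r} \cdot \sigma_w = \sum_{(a,b)} \sigma_{w \cdot t_{a,b}}
\]
where the sum is over all transpositions $t_{a,b}$ with $a \leq r < b$ such that $\ell(w \cdot t_{a,b}) = \ell(w) + 1$ (equivalently, $w(a) < w(b)$ and no index $i$ strictly between $a$ and $b$ satisfies $w(a) < w(i) < w(b)$). First I would invoke this classical identity as our starting point.

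Next, I would use the identification between $x_r$ and a difference of Schubert classes. Specifically, under the Borel presentation described in Remark~\ref{remark:CohomologyFlag}, the Schubert polynomial for $s_r$ is $\mathfrak{S}_{s_r} = x_1 + x_2 + \cdots + x_r$, so the cohomology class identity
\[
x_r = \sigma_{s_r} - \sigma_{s_{r-1}}
\]
holds in $H^*(\Flags(\C^n))$, with the convention $\sigma_{s_0} := 0$. Multiplying through by $\sigma_w$ and applying the classical Monk formula to each term yields
\[
x_r \cdot \sigma_w = \sum_{\substack{(a,b):\, a \leq r < b \\ \ell(w t_{a,b}) = \ell(w)+1}} \sigma_{w t_{a,b}} \;-\; \sum_{\substack{(a,b):\, a \leq r-1 < b \\ \ell(w t_{a,b}) = \ell(w)+1}} \sigma_{w t_{a,b}}.
\]

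The final and most delicate step is to carefully analyze the cancellation between the two sums. Any pair $(a,b)$ appearing in both sums will cancel, so only pairs appearing in exactly one of the two sums contribute. A pair lies in the first sum but not the second precisely when $a \leq r$ and $a > r-1$, which forces $a = r$ (together with $b > r$); this yields the positive contribution indexed by transpositions $t_{r,q}$ with $r < q$ and $\ell(w t_{r,q}) = \ell(w)+1$, exactly matching the description of the $w'$ in the theorem statement. Conversely, a pair lies in the second sum but not the first precisely when $b > r-1$ and $b \leq r$, forcing $b = r$ (together with $a < r$); this gives the negative contribution indexed by $t_{p,r}$ with $p < r$ and $\ell(w t_{p,r}) = \ell(w)+1$, matching the description of the $w''$.

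The only substantive work is the bookkeeping in the last paragraph, and even that is routine once the translation $x_r = \sigma_{s_r} - \sigma_{s_{r-1}}$ is in hand; there is no genuine obstacle since the theorem is essentially a reformulation of a classical identity.
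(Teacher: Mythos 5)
Your derivation is correct, and it is the standard way to pass from the form $\sigma_{s_r}\cdot\sigma_w=\sum_{a\le r<b,\ \ell(wt_{ab})=\ell(w)+1}\sigma_{wt_{ab}}$ (which is what one finds explicitly in Fulton and Macdonald) to the form stated here using $x_r=\mathfrak{S}_{s_r}-\mathfrak{S}_{s_{r-1}}=\sigma_{s_r}-\sigma_{s_{r-1}}$. The cancellation bookkeeping is right: the pairs surviving from the positive sum are exactly $(a,b)=(r,q)$ with $q>r$, and from the negative sum exactly $(a,b)=(p,r)$ with $p<r$, matching the paper's descriptions of $w'$ and $w''$ once one unpacks $\ell(wt_{ab})=\ell(w)+1$. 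Note that the paper does not prove this theorem at all; it simply cites Monk and Fulton. So your write-up actually supplies a (routine but complete) derivation where the paper has only a reference, and the two are not in conflict.
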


The above arguments imply that a basis for the set of linear relations on Schubert classes on a given $\Hess(\mathsf{N},h)$ can be obtained by a step-by-step procedure as follows. 
First, choose a sequence of Hessenberg functions $h_0, h_1, \ldots, h_N$ such that $h_0=(n,n,\ldots,n)$ corresponds to the full flag variety, $h_N=h$ is the given Hessenberg function, and $h_{k+1}$ is obtained from $h_k$ by removing a corner box as in the discussion above. Second, at each step, 
use Corollary~\ref{corollary:SchubertRelations} and Theorem~\ref{theorem:Monk}
to obtain a basis of the linear relations satisfied by 
the Schubert classes $\overline{\sigma_{w}}$
in $H^*(\Hess(\mathsf{N},h_{k+1}))$ (but not satisfied in $H^*(\Hess(\mathsf{N},h_k))$). Taking the union of all such relations obtained at each step, we obtain a basis for the set of linear relations satisfied by Schubert classes in $H^*(\Hess(\mathsf{N},h=h_N))$. 

We give a small worked example. 

\begin{example} \label{example:LinearRelationsSchubertClasses}
Let $n=4$ and $h=(2,4,4,4)$. In this case, we can obtain $h$ from the Hessenberg function $h_0=(4,4,4,4)$ for the full flag variety in two steps: first we remove the box $(4,1)$ to obtain $h_1 = (3,4,4,4)$, and second, we remove the box $(3,1)$ to obtain $h_2 = h = (2,4,4,4)$. We consider both steps in sequence. At the first step, we have $i=4$ and $j=1$ and from Theorem~\ref{theorem: alternating Schuberts} we conclude that 
\[
f_{3,1} = \mathfrak{S}_{4123} - \mathfrak{S}_{3142} + \mathfrak{S}_{2341}
\]
and from 
Corollary~\ref{corollary:SchubertRelations}
we know we obtain $6$ linearly independent relations satisfied by Schubert classes in the cohomology $H^*(\Hess(\mathsf{N},h_1))$ by multiplying the above expression in Schubert classes by the six monomials 
$$\{1, x_2, x_3, x_2^2, x_2 x_3, x_2^2 x_3\}.$$ 
Using Theorem~\ref{theorem:Monk}, we obtain all linear relations among the Schubert classes in $H^*(\Hess(\mathsf{N},h_1))$ as follows:
\begin{align*} 
\overline{\sigma_{4123}}=\overline{\sigma_{3142}}-\overline{\sigma_{2341}}, \ \ \overline{\sigma_{4213}}=\overline{\sigma_{3412}}+2\overline{\sigma_{3241}}-\overline{\sigma_{2431}}, \ \ \overline{\sigma_{4132}}=\overline{\sigma_{3241}}, \ \ \overline{\sigma_{4312}}=\overline{\sigma_{3421}}, \ \ \overline{\sigma_{4231}}=0, \ \ 
\overline{\sigma_{4321}}=0. 
\end{align*}

At the second step, we have $i=3$ and $j=1$ so by Theorem~\ref{theorem: alternating Schuberts} we have 
\[
f_{2,1} = \mathfrak{S}_{3124} - \mathfrak{S}_{2314}
\]
and now again by 
Corollary~\ref{corollary:SchubertRelations}
we obtain another $6$ linearly independent relations by multiplying the above expression by the same six monomials $\{1, x_2, x_3, x_2^2, x_2 x_3, x_2^2 x_3\}$. 
Using Theorem~\ref{theorem:Monk} again, we obtain $6$ linearly independent relations.
Finally, the union of these sets of equations give us $12$ linearly independent relations which are satisfied by the (images of the) Schubert classes in $H^*(\Hess(\mathsf{N},h))$ 
as follows: 
\begin{align*} 
&\overline{\sigma_{3124}}=\overline{\sigma_{2314}}, \ \ 
\overline{\sigma_{4123}}=\frac{1}{2} \overline{\sigma_{2413}}, \ \ 
\overline{\sigma_{3214}}=\frac{1}{2} \overline{\sigma_{2413}}, \ \ 
\overline{\sigma_{3142}}=\frac{1}{2} \overline{\sigma_{2413}}+\overline{\sigma_{2341}}, \ \ 
\overline{\sigma_{4213}}=0, \ \ 
\overline{\sigma_{3241}}=\frac{1}{2} \overline{\sigma_{2431}}, \\ 
&\overline{\sigma_{4132}}=\frac{1}{2} \overline{\sigma_{2431}}, \ \ 
\overline{\sigma_{3412}}=0, \ \ 
\overline{\sigma_{4312}}=0, \ \ 
\overline{\sigma_{3421}}=0, \ \ 
\overline{\sigma_{4231}}=0, \ \ 
\overline{\sigma_{4321}}=0. 
\end{align*}
In particular, we can answer Question~\ref{conjecture:HaradaTymoczko} in the affirmative for $h_1 = (3,4,4,4)$ and $h=h_2 = (2,4,4,4)$.
\end{example}

\begin{remark}
As in Example~\ref{example:LinearRelationsSchubertClasses}, we can answer Question~\ref{conjecture:HaradaTymoczko} in the affirmative for all Hessenberg functions in the case $n=4$.
\end{remark}

\section{Further directions: a proposal for a definition of Hessenberg Schubert polynomials}\label{sec: HS polynomials} 

In this last section, we propose a new research program: the study of Hessenberg Schubert polynomials, in the setting of regular nilpotent Hessenberg varieties and their cohomology rings. 
 Specifically, we propose a definition of a ``Hessenberg Schubert polynomial'', which generalizes the classical Schubert polynomials in the special case of $\Flags(\C^n)$. 
In this section we specify coefficients in cohomology.

To explain our definition, we need to recall some facts about the classical case (for a reference, see \cite{Fulton}). First, as mentioned in Remark~\ref{remark:CohomologyFlag} the cohomology ring $H^*(\Flags(\C^n);\Z)$ has the well-known Borel presentation 
\begin{equation}\label{eq: pi} 
\pi: \Z[x_1,\ldots, x_n] \to \Z[x_1,\ldots,x_n]/\langle e_1,\ldots, e_n\rangle \cong H^*(\Flags(\C^n);\Z)
\end{equation}
where, for each $i$, the $e_i$ denotes the $i$-th elementary symmetric polynomial. Second, in the cohomology of the flag variety, there exists a natural additive basis of Schubert classes $\{\sigma_w\}_{w \in S_n}$ where each $\sigma_w$ is the Poincar\'e dual to the (opposite) Schubert variety $\overline{B_{-} wB} =: \overline{\Omega_w^{\circ}}$. Third, classical Schubert calculus concerns the computation of the structure constants of the Schubert classes in $H^*(\Flags(\C^n);\Z)$, i.e. the $c^u_{wv}$ in the equation
\begin{equation}\label{eq: Schubert str const}
\sigma_w \cdot \sigma_v = \sum_{u \in S_n} c^u_{wv} \sigma_u.
\end{equation}
The structure constants $c^u_{wv}$ record intersection numbers for Schubert varieties, as studied in classical Schubert calculus.
Fourth, there exist certain polynomials $\mathfrak{S}_w \in \Z[x_1,\ldots,x_n]$ for $w \in S_n$ called the \textbf{Schubert polynomials}, with many good properties, two of which are that they map to the Schubert classes under the projection $\pi$ in~\eqref{eq: pi}, i.e. $\pi(\mathfrak{S}_w) = \sigma_w$ for all $w \in \S_n$, 
and in addition, the Schubert polynomials satisfy a stability property with respect to $n$.
Concretely, under the natural inclusion $i: S_n \hookrightarrow S_{n+1}$, the Schubert polynomial $\mathfrak{S}_w$ coincides with the Schubert polynomial $\mathfrak{S}_{i(w)}$, which allows us to define the Schubert polynomial $\mathfrak{S}_w$ for a permutation $w \in S_{\infty}:=\bigcup_{n \leq 1} S_n$.
It is known that these Schubert polynomials $\{\mathfrak{S}_w \mid w \in S_{\infty} \}$ form a basis for $\Z[x_1,x_2,\ldots]$ (with infinitely many variables) and the structure constants for the $\mathfrak{S}_w$'s match those for the $\sigma_w$'s, i.e., 
\[
\mathfrak{S}_w \cdot \mathfrak{S}_v = \sum_{u \in \S_\infty } c^u_{wv} \mathfrak{S}_u  \ \ \textup{ in } \Z[x_1,x_2,\cdots]
\]
where the constants $c^u_{wv}$ for $w,v,u \in S_n$ are those that appear in~\eqref{eq: Schubert str const} (but the multiplication now takes place in $\Z[x_1,x_2,\cdots]$). These Schubert polynomials can be defined using the well-known divided difference operators. For these reasons, among others, Schubert polynomials (and their analogues for equivariant cohomology, $K$-theory, etc.) are an integral ingredient in the study of Schubert calculus. It thus seems natural to ask whether there exist analogues of Schubert polynomials in the more general setting of Hessenberg varieties. Our proposed definition of such polynomials, made precise in Definition~\ref{definition: Hess Schub poly} below, is motivated by the following fact. 
Let $w \in \S_n$. Then for all $I = (i_1, \ldots, i_n) \in \Z^{n}_{\geq 0}$ satisfying $0 \leq i_m \leq n-m$ for all $1 \leq m \leq n$ there exists integers $a^w_I \in \Z$ such that 
\begin{equation}\label{eq: change of basis flag}
\mathfrak{S}_w = 
\sum_{\substack{I=(i_1, \ldots, i_n) \\ 0 \leq i_m \leq n-m, \, \forall m}}
a^w_I x_1^{i_1} x_2^{i_2} \cdots x_n^{i_n}.
\end{equation}
Note that the constants $a^w_I$ are in fact determined by the following equality in $H^*(\Flags(\C^n);\Z)$ 
\[
\sigma_w =
\sum_{\substack{I=(i_1, \ldots, i_n) \\ 0 \leq i_m \leq n-m, \, \forall m}}
a^w_I x_1^{i_1} x_2^{i_2} \cdots x_n^{i_n}
\]
where the RHS is interpreted in the quotient ring $H^*(\Flags(\C^n);\Z) \cong \Z[x_1,\ldots,x_n]/\langle e_1,\ldots, e_n\rangle$, since both the Schubert classes $\{\sigma_w\}_{w \in \S_n}$ and the monomials $\{x_1^{i_1} \cdots x_n^{i_n} \, \mid \, 0 \leq i_m \leq n-m \}$ form additive bases of $H^*(\Flags(\C^n);\Z)$. Put another way, the constants $a^w_I$ are the coefficients in the change-of-basis matrix relating the monomial basis $\{x_1^{i_1} \cdots x_n^{i_n} \, \mid \, 0 \leq i_m \leq n-m \}$ to the Schubert basis $\{\sigma_w\}_{w \in \S_n}$. It is well-known that these coefficients $a^w_I$ are non-negative integers 
\cite[(4.17)]{Macdonald}.

The essential idea of our Definition~\ref{definition: Hess Schub poly} is to define Hessenberg Schubert polynomials using the constants in an analogous change-of-basis matrix for two additive bases of $H^*(\Hess(\mathsf{N},h);\Q)$, i.e., to view the equality in~\eqref{eq: change of basis flag} as a \emph{definition} of the Schubert polynomials, with appropriate choices of bases on both the LHS and RHS. Henceforth we work with $\Q$ coefficients for the cohomology rings since our results in the previous sections hold over $\Q$. One of the additive bases we will use is the set of monomials $\{x_1^{i_1} x_2^{i_2} \cdots x_n^{i_n} \, \mid \, 0 \leq i_m \leq h(m)-m \}$, shown to be a basis of $H^*(\Hess(\mathsf{N},h);\Q)$ in 
Corollary~\ref{cor: basis}
of Section~\ref{section:monomial basis}. For the appropriate analogue of the basis of Schubert classes in the Hessenberg setting, we need some results of the fifth author \cite{Tymoczko}, as we now explain. 

Let $\Omega_w^{\circ}$ denote the (opposite) Schubert cell corresponding to $w \in \S_n$. 
In what follows, we take a regular nilpotent matrix $\mathsf{N}$ as the following form
\begin{equation*}
\mathsf{N}=\begin{pmatrix}
0 &    &  &  & \\
1 & 0 &   & \\
  & 1  & \ddots &  & \\
  &    & \ddots  & \ddots &  \\
  &    &  & 1 & 0 \\ 
\end{pmatrix}, 
\end{equation*}
which is the conjugate of the regular nilpotent matrix in Jordan canonical form by the longest element $w_0 \in S_n$. 
The fifth author proves in \cite{Tymoczko} that the intersections
\[
\{ \Hess(\mathsf{N},h) \cap \Omega_w^{\circ} \, \mid \, w \in \S_n \textup{ such that } \Hess(\mathsf{N},h) \cap \Omega_w^{\circ} \neq \emptyset \}
\]
form an affine paving of $\Hess(\mathsf{N},h)$. This implies in particular that the homology classes corresponding to their closures 
\[
\Theta_w^{h} := [ \overline{\Hess(\mathsf{N},h) \cap \Omega_w^{\circ}}] \in H_*(\Hess(\mathsf{N},h);\Z)
\]
form an additive basis of the homology $H_*(\Hess(\mathsf{N},h);\Z)$. We now use the fact that, although $\Hess(\mathsf{N},h)$ is in general singular, its cohomology ring $H^*(\Hess(\mathsf{N},h);\Q)$ with \textit{rational} coefficients is nevertheless a Poincar\'e duality algebra \cite[Proposition 10.6]{AHHM}, so it still has non-degeneracy properties similar to the cohomology rings of compact smooth oriented manifolds. 
The fact that $H^*(\Hess(\mathsf{N},h);\Q)$ is a Poincar\'e duality algebra follows from an algebraic argument, but we can also take a geometrically natural generator $\beta_h$ of the top degree cohomology $H^{top}(\Hess(\mathsf{N},h);\Q)$ as follows \cite[Section~3]{EHNT19}:
\begin{equation} \label{eq:TopGenerator}
\beta_h:=\frac{1}{|S_h|}\prod_{j=1}^{n-1} \prod_{i=j+1}^{h(j)} (x_j-x_i)
\end{equation}
where $S_h$ is the Young subgroup of $S_n$ associated with $h$ defined as follows. 
Let $\ell$ be the number of integers $i$ with $1 \leq i \leq n$ satisfying $h(m)=m$ and let 
$\{k_i \, \mid \, 1 \leq i \leq \ell\}$ be precisely these integers, i.e., $h(k_i)=k_i$ for all $1\leq i \leq \ell$, and assume $k_1 < k_2< \cdots < k_\ell=n$.
Then, $S_h$ is defined to be the product of smaller symmetric groups $S_{k_1} \times S_{k_2-k_1} \times \cdots S_{k_{\ell}-k_{\ell-1}}$.
Then, by the non-degeneracy assumption of Poincar\'e duality algebras, we may define the Poincar\'e dual class in $H^*(\Hess(\mathsf{N},h);\Q)$ of $\Theta_w^h$ and thereby obtain
\[
\sigma_w^h := \mathrm{PD}(\Theta_w^h) \in H^*(\Hess(\mathsf{N},h);\Q)
\]
where $\mathrm{PD}$ denotes the Poincar\'e dual with respect to the choice of generator made in \eqref{eq:TopGenerator}. Specifically, $\mathrm{PD}$ is the isomorphism $H_{2d-2k}(\Hess(\mathsf{N},h);\Q) \cong H^{2k}(\Hess(\mathsf{N},h);\Q)$ via the non-degeneracy of the pairing
\begin{equation} \label{eq:PDiso}
H^{2k}(\Hess(\mathsf{N},h);\Q) \times H^{2d-2k}(\Hess(\mathsf{N},h);\Q) \to H^{2d}(\Hess(\mathsf{N},h);\Q) \xrightarrow[\cong]{\int} \Q
\end{equation} 
for $0 \leq k \leq d:=\sum_{j=1}^n (h(j)-j)$, 
where the isomorphism $\int: H^{2d}(\Hess(\mathsf{N},h);\Q) \xrightarrow{\cong} \Q$ sends the generator $\beta_h$ in \eqref{eq:TopGenerator} to $1$.
From the above discussion it then follows that the set 
\[
\{\sigma_w^h \, \mid \, w \in \S_n \textup{ such that } \Hess(\mathsf{N},h) \cap \Omega_w^{\circ} \neq \emptyset \}
\]
is an additive basis of $H^*(\Hess(\mathsf{N},h);\Q)$. We take these to be the appropriate analogues of the Schubert classes in the classical case. In Section~\ref{section:monomial basis} we derived a different basis for $H^*(\Hess(\mathsf{N},h);\Q)$, consisting of monomials. It follows from basic linear algebra that there exist constants $a^w_I \in \Q$ for $w$ such that $\Hess(\mathsf{N},h) \cap \Omega_w^{\circ} \neq \emptyset$ and for $I=(i_1,\ldots, i_n)$ with $0 \leq i_m \leq h(m)-m$ for all $m$ such that 
\begin{equation}\label{eq: def proto for Hess Schub poly}
\sigma_w^h =\sum_{\substack{I=(i_1, \ldots, i_n) \\ 0 \leq i_m \leq h(m)-m, \, \forall m}}
a^w_I x_1^{i_1} x_2^{i_2} \cdots x_n^{i_n}
\end{equation}
where the equality is interpreted in $H^*(\Hess(\mathsf{N},h);\Q)$. We can now state our definition.

\begin{definition}\label{definition: Hess Schub poly}
Let $h: [n] \to [n]$ be a Hessenberg function. Let $w \in \S_n$ such that $\Hess(\mathsf{N},h) \cap \Omega_w^{\circ} \neq \emptyset$. We define the Hessenberg Schubert polynomial $\mathfrak{S}^h_w$ as 
\begin{equation}\label{eq: def Hess Schub poly}
\mathfrak{S}^h_w := 
\sum_{\substack{I=(i_1, \ldots, i_n) \\ 0 \leq i_m \leq h(m)-m, \, \forall m}}
a^w_I x_1^{i_1} x_2^{i_2} \cdots x_n^{i_n}
\end{equation}
where the RHS is considered as an element in $\Q[x_1,\ldots,x_n]$ and the constants $a^w_I \in \Q$ are defined by the equalities~\eqref{eq: def proto for Hess Schub poly}. 
\end{definition}

It is not hard to see that Definition~\ref{definition: Hess Schub poly} generalizes the usual Schubert polynomials, i.e., 
$\mathfrak{S}^h_w = \mathfrak{S}_w$ for all $w \in \S_n$ if $h=(n,n,\ldots,n)$.
This is because when $h=(n,n,\ldots,n)$, the $\beta_h$ in \eqref{eq:TopGenerator} is the Poincar\'e dual of a point in the flag variety (cf. \cite[Equation~(3.5)]{EHNT19}). 
Therefore, the isomorphism $\int$ in \eqref{eq:PDiso} coincides with usual evaluation map sending the Poincar\'e dual of a point to $1 \in \Z$, and in this case $\sigma_w^h$ are equal to the usual Schubert classes $\sigma_w$.

In light of the above discussion, we believe that these Hessenberg Schubert polynomials are a natural generalization of the classical Schubert polynomials and should enjoy similar properties as in the classical case. For instance, we may ask: 

\begin{itemize} 
\item What properties do the coefficients $a^w_I$ satisfy and can they be interpreted geometrically?
\item Do the $\mathfrak{S}^h_w$ satisfy ``stability'' conditions as in the classical case? 
\end{itemize} 
Here, the ``stability'' in the second question is intended in the following sense. Let $H_n$ be the set of Hessenberg functions $h: [n] \to [n]$. Then, we can consider the natural inclusion $j: H_n \rightarrow H_{n+1}$ so that $j(h): [n+1] \to [n+1]$ is the Hessenberg function defined by $(j(h))(i)=h(i)$ for $1 \leq i \leq n$ and $(j(h))(n+1)=n+1$. We also consider the natural inclusion $i: S_n \hookrightarrow S_{n+1}$. Then, we may ask: does the Hessenberg Schubert polynomial $\mathfrak{S}^h_w$ coincide with the Hessenberg Schubert polynomial $\mathfrak{S}^{j(h)}_{i(w)}$?

 We intend to explore these and related questions in future work.

\end{document}